\documentclass[12pt]{amsart}
\usepackage{amsmath, amssymb, amsthm} 
\usepackage{hyperref} 
\usepackage{geometry} 
\usepackage{color}
\usepackage{tikz}
\usepackage{graphicx}
\usepackage{float}
\newtheorem{theorem}{Theorem}[section]
\newtheorem{lemma}[theorem]{Lemma}
\newtheorem{corollary}[theorem]{Corollary}
\newtheorem{proposition}[theorem]{Proposition}

\theoremstyle{definition}
\newtheorem{definition}[theorem]{Definition}

\theoremstyle{remark}
\newtheorem{remark}[theorem]{Remark}

\newcommand{\R}{\mathbb{R}}
\newcommand{\C}{\mathbb{C}}
\newcommand{\Hk}{H_k}
\newcommand{\dist}{\operatorname{dist}}
\newcommand{\Vol}{\operatorname{Vol}}
\newcommand{\bla}{\boldsymbol{\lambda}}
\newcommand{\pr}{\partial}
\newcommand{\ind}{\operatorname{ind}}
\newcommand{\dv}{\operatorname{div}}
\newcommand{\dd}{\,d}
\newcommand{\Mass}{\mathbb{M}}
\newcommand{\II}{\mathrm{I}\!\mathrm{I}}

\title{Stability and Area-Minimizing Property of Higher-Dimensional Helicoids}

\author{Chung-Jun Tsai}
\address{Department of Mathematics, National Taiwan University, and National Center for Theoretical Sciences, Math Division, Taipei 10617, Taiwan}
\email{cjtsai@ntu.edu.tw}

\author{Mao-Pei Tsui}
\address{Department of Mathematics, National Taiwan University, and National Center for Theoretical Sciences, Math Division, Taipei 10617, Taiwan}
\email{maopei@math.ntu.edu.tw}

\author{Jingbo Wan}
\address{Laboratoire Jacques-Louis Lions de Sorbonne Universit\'e, 4 place Jussieu, Paris 75005, France}
\email{jingbo.wan@sorbonne-universite.fr}

\author{Mu-Tao Wang}
\address{Department of Mathematics, Columbia University, New York, NY 10027, USA}
\email{mtwang@math.columbia.edu}

\thanks{C.-J.~Tsai is supported in part by the National Science and Technology Council grant 112-2628-M-002-004-MY4.  M.-P.~Tsui is supported in part by the National Science and Technology Council grants 
115-2115-M-002-005-MY3. J.~Wan is supported by ERC-2023 AdG 101141855 BLaHST. M.-T.~Wang is supported in part by the National Science Foundation under Grant DMS-2404945. This work was supported by grants from the Simons Foundation [MPS-TSM-00007411, M.-T.~W.]. Part of this work was carried out when M.-T.~Wang was visiting the National Center for Theoretical Sciences.} 

\date{}

\begin{document}

\begin{abstract}
For each integer $k\geq 1$, we study the $(k+1)$-dimensional helicoid $H_k\subset\R^{2k+1}$ parametrized by
\[
  (u_1,\ldots,u_k,s)
  \longmapsto
  \bigl(u_1e^{is},\ldots,u_ke^{is},s\bigr)
  \in \C^k\times\R\cong \R^{2k+1}.
\] These helicoids form a basic and distinguished family of complete, properly embedded minimal submanifolds diffeomorphic to $\mathbb R^{k+1}$, and provide natural higher-dimensional analogues of the classical helicoid in $\mathbb R^3$.

We completely determine their stability: $H_k$ is stable for $k\geq 3$ and unstable for $k\leq 2$.  The sharp transition at $k=3$ is particularly striking: while the classical helicoid $(k=1)$ and its first higher-dimensional analogue $(k=2)$ are unstable, the four-dimensional helicoid $H_3\subset\R^7$ is already stable.

For $k\geq 3$, we also determine their area-minimizing property: $H_k$ is area-minimizing when $k$ is even and not area-minimizing when $k$ is odd. The area-minimizing result is proved by constructing an explicit calibration, while the non-area-minimizing result follows from an explicit competitor. In particular, for every even $k\geq 4$, the $(k+1)$-dimensional helicoid $H_k$ is an entire minimal graph in $\mathbb{R}^{2k+1}$ that is area-minimizing. 
\end{abstract}

\maketitle


\section{Introduction}

The classical helicoid, first described by Meusnier in 1776, is one of the most fundamental models in the theory of minimal surfaces. The work of Colding and Minicozzi shows, in a precise sense, that embedded minimal disks in a ball in $\R^3$ are modeled either on planes or on helicoids \cite{ColdingMinicozzi}. Meeks and Rosenberg proved that the plane and the helicoid are the only complete, properly embedded, simply connected minimal surfaces in $\R^3$ \cite{MeeksRosenberg}. Bernstein and Breiner subsequently showed that every complete, non-flat, properly embedded minimal surface in $\R^3$ with finite genus and one end is asymptotic to a helicoid \cite{BernsteinBreiner}. These results underscore the central role of the helicoid in the local and global geometry of embedded minimal surfaces.

Despite this rigidity, the classical helicoid is unstable: it admits a compactly supported normal variation with negative second variation of area.  The pioneering work of Fischer-Colbrie and Schoen \cite{FCS} (see also do Carmo-Peng \cite{dCP} and references in \cite{FCS} for earlier results) on stable minimal surfaces shows that a complete stable minimal surface in $\mathbb{R}^3$ must be a plane. Their arguments imply the instability of the helicoid indirectly through this rigidity theorem. 

The above contrast motivates the study of higher-dimensional helicoids.  For any integer $k\geq 1$, let $H_k$ be the image of the proper embedding
\begin{align}\label{eq:helicoid-parametrization}\begin{split}
  \begin{array}{cccl}
    &\R^k\times\R &\to &\C^k\times\R\cong\R^{2k+1}, \\
    &(u_1,\ldots,u_k,s) &\mapsto & \bigl(u_1e^{is},\ldots,u_ke^{is},s\bigr).
  \end{array}
\end{split}\end{align}
It is not hard to see that $H_k$ is a $(k+1)$-dimensional properly embedded minimal submanifold of $\R^{2k+1}$, and $H_1$ is the classical helicoid.  This family belongs to the broader class of higher-dimensional ruled minimal submanifolds studied in \cite{BarbosaDajczerJorge}.

Our first main theorem gives a complete and unexpectedly sharp answer to the
stability question.

\begin{theorem}\label{thm:stability} 
The $(k+1)$-dimensional helicoid $H_k$ is stable if and only if $k\geq 3$.
Namely,
\[
  H_k \text{ is unstable for } k\leq 2,
  \qquad
  H_k \text{ is stable for } k\geq 3.
\]
\end{theorem}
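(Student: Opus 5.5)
The plan is to write the second variation of area on $\Hk$ as an explicit quadratic form in $k$ scalar functions on $\R^k\times\R$. Stability for $k\geq3$ would then follow from a Hardy-type inequality in the $u$-variables, and instability for $k\leq2$ from explicit test sections.

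\textbf{Step 1: geometry of $\Hk$.} Write $F(u,s)=(ue^{is},s)$. Then $F_{u_j}=(e_je^{is},0)$ and $F_s=(iue^{is},1)$, which are orthogonal, so the induced metric is $g=|du|^2+(1+r^2)\,ds^2$ with $r=|u|$. A normal frame is
\[
\tilde\nu_j=(ie_je^{is},0)-\frac{u_j}{1+r^2}F_s,\qquad j=1,\dots,k,
\]
which I would orthonormalize by splitting $\R^k$ into the radial direction $u/r$ and its orthogonal complement. The second derivatives are $F_{u_ju_l}=0$, $F_{ss}=(-ue^{is},0)$ (which is tangent), and $F_{u_js}=(ie_je^{is},0)$. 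Hence the only nonzero component of the second fundamental form is $A(\partial_{u_j},\partial_s)=(ie_je^{is},0)^\perp$. From this I would compute $|\langle A,V\rangle|^2$ and the normal connection $\nabla^\perp$ in the frame above. Everything is independent of $s$ and $SO(k)$-equivariant in $u$.

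\textbf{Step 2: reduce the stability form.} For $V=\sum_j f_j\nu_j$, I expect
\[
Q(V)=\int\Bigl(|\nabla^\perp V|^2-\sum_{a,b}\langle A(e_a,e_b),V\rangle^2\Bigr)\,d\mu
\]
to become $\int\bigl(|\nabla f|^2_g+\text{(normal-connection cross terms)}-W(r)\,|f|^2\bigr)\sqrt{1+r^2}\,du\,ds$. Here the potential $W$ should behave like $c/r^2$ at infinity; the natural scale is $(1+r^2)^{-1}$, coming from $|F_s|^2=1+r^2$. The cross terms involve $s$-derivatives weighted by $(1+r^2)^{-1}$ and should be absorbable, or removable after a Fourier decomposition in $s$.

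\textbf{Step 3: instability for $k\leq2$.} I would test with the single component along the radial normal, with $f=\varphi(r)\psi(s)$ and $\psi$ a long cutoff in $s$, so that the $s$-derivative terms contribute arbitrarily little. This reduces $Q$ to a one-dimensional radial problem with weight $r^{k-1}\sqrt{1+r^2}\sim r^k$ at infinity and potential of order $r^{-2}$. When $k\leq 2$ the Hardy constant for this weight is too small, namely $(k-1)^2/4\leq 1/4$ for the effective dimension $k+1$. Logarithmic cutoffs $\varphi(r)=r^{-(k-1)/2}\eta(\log r)$, or a direct comparison with the classical helicoid, should then make $Q<0$. For $k=1$ the classical result of Fischer-Colbrie and Schoen \cite{FCS} also applies directly.

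\textbf{Step 4: stability for $k\geq3$.} Here I would either exhibit a positive function $w(r)$ with $Lw\leq0$ for the reduced scalar operator, such as $w=(1+r^2)^{-\alpha}$ with $\alpha$ tuned, and apply the Fischer-Colbrie--Schoen criterion component-wise, or prove the weighted Hardy inequality $\int|\partial_r f|^2r^k\,dr\geq\frac{(k-1)^2}{4}\int f^2r^{k-2}\,dr$ together with its inner analogue near $r=0$.

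\textbf{Main obstacle.} The main obstacle is Step 4. The potential term has to be beaten sharply at the borderline $k=3$, uniformly over all normal components, including the coupling between radial and angular normal directions induced by $\nabla^\perp$, and on the whole range of $r$, not only asymptotically. I would first try a supersolution of the form $(1+r^2)^{-\alpha}$ and check the resulting polynomial inequality in $r^2$ for $k\geq3$.
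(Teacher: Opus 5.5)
Your proposal has two genuine gaps, one in each half.

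\textbf{Instability for $k=2$.} Step~3 fails because it perturbs in the wrong normal direction. The second fundamental form is small along your unit radial normal $\nu_r=h^{-1}(i\omega e^{is},-r)$, where $\omega=u/r$. One has $\sum_{a,b}\langle\II(E_a,E_b),\nu_r\rangle^2=2/h^4$, versus $2/h^2$ for the unit normals $(ive^{is},0)$ with $v\perp u$. Correspondingly, for $V=\varphi(r)\nu_r$ the $s$-independent part of the second variation is
\[
 |S^{k-1}|\int_0^\infty\Bigl[hr^{k-1}(\varphi')^2+\frac{(k-3)r^{k-1}+(k-1)r^{k-3}}{h^3}\,\varphi^2\Bigr]dr .
\]
Its potential is $O(r^{-4})$ relative to the weight $r^k$, not $O(r^{-2})$. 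This only matters when the Hardy constant vanishes, i.e.\ $k=1$, where your logarithmic cutoff does work; for $k\ge3$ the potential is even pointwise nonnegative.

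For $k=2$, set $\varphi=ha$ and complete the square as in \eqref{eq:square-a}. This bounds the form below by $2\pi\int_0^\infty\frac{4+r^4}{4hr}a^2\,dr\ge0$. Angular dependence of $a$, or $s$-dependence, only adds nonnegative terms: $h^3r^{-1}|\nabla_Sa|^2$ and $h^{-1}\bigl(|\pr_s\tilde f|^2+(u\cdot\pr_s\tilde f)^2\bigr)$, with $\tilde f=a\omega$. So no normal field on $H_2$ with only a radial normal component is destabilizing.

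The instability lives in the directions tangent to the circles, $u\cdot f=0$. For $f=b\,e_\theta$, the integrand at large $r$ (with respect to $dr\,d\theta$) is $r^2(\pr_rb)^2+(\pr_\theta b)^2-2b^2$. This is a genuine $-c\,r^{-2}$ potential, with $c=2$ for $b=b(r)$ and $c=1$ for the paper's $Y=-\sin\theta\,e_\theta$. Either beats the Hardy constant $\tfrac14$, and the cutoff $r^{-1/2}\phi(\log(r/R))$ then works as in Proposition~\ref{prop:degree-one-instability}.

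\textbf{Stability for $k\ge3$.} Step~4 names the right obstacle but gives no mechanism that overcomes it at $k=3$, and the suggested component-wise supersolution cannot. For instance, write $V=(ie^{is}f,-u\cdot f)$. Apply $w=h^{-(k-1)/2}$ (your $(1+r^2)^{-\alpha}$ with $\alpha=\frac{k-1}{4}$) to each Cartesian component $f^j$ and discard $|\nabla(u\cdot f)|^2$. This yields only $q\ge\bigl(\frac{(k-1)^2}{4}-3\bigr)\int|f|^2/h$, i.e.\ stability for $k\ge5$.

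At $k=3$ there is no slack at all. The optimal constant in \eqref{eq:hardy-sharp} is $c_k$ (Remark~\ref{rem:ck-optimal}), and $c_3=3$. Write $f=a\omega+F$ with $F$ tangent to the spheres. After integrating over $S^{k-1}$, the angular part has large-$r$ density $r^k|\pr_rF|^2+r^{k-2}\bigl[(\dv_SF)^2+|dF^\flat|^2-k|F|^2\bigr]$. At $k=3$ this closes only if two sharp ingredients are combined:
\begin{itemize}
\item the sharp Hardy constant $(k-1)^2/4$;
\item the sharp gap $\int_{S^{k-1}}\bigl((\dv_SF)^2+|dF^\flat|^2\bigr)\ge(k-1)\int_{S^{k-1}}|F|^2$ of Lemma~\ref{lem:sphere}.
\end{itemize}
The second is a Bochner estimate for tangent vector fields, valid only for $k\ge3$, which a scalar supersolution does not see.

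You must also absorb the radial--angular coupling term $4hr^{k-3}a\,\dv_SF$ at every $r$, not just asymptotically. The paper does this in three steps:
\begin{itemize}
\item exact square completions in $r$;
\item splitting $a$ into its spherical mean, which does not pair with $\dv_SF$, and a mean-zero part controlled by the scalar gap $k-1$;
\item checking the discriminant condition $\beta\gamma\ge4h^2$ pointwise in $r$.
\end{itemize}
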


The proof proceeds by first showing that the stability condition is equivalent to 
the functional inequality \eqref{eq:stability-cond} for vector-valued functions on $\R^k$ and then establishing this inequality when $k\geq 3$.  When $k\leq 2$, the instability is proved by constructing unstable perturbations.

The dichotomy in Theorem~\ref{thm:stability} is perhaps the most surprising feature of the result.  One might expect the instability of the classical helicoid to persist through several dimensions, or even throughout the whole family.  Instead, stability appears abruptly at $k=3$: the three-dimensional $H_2\subset\R^5$ is unstable, whereas the four-dimensional $H_3\subset\R^7$ is stable.  

For $k\geq 3$, we consider the substantially stronger property of area minimization.  Stability is infinitesimal: it asserts the nonnegativity of the second variation under compactly supported normal variations.  Area minimization is global, requiring comparison with arbitrary compactly supported competitors having the same boundary.  Our second main theorem shows that, for $k\geq 3$, the area-minimizing property of $H_k$ is completely determined by the parity of $k$.

\begin{theorem}\label{thm:area-minimizing} 
For $k\geq 3$, the $(k+1)$-dimensional helicoid $H_k$ is area-minimizing when $k$ is even, and not area-minimizing when $k$ is odd. 
\end{theorem}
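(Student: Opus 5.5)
The plan is to treat the two parities by different mechanisms: an explicit calibration for even $k$, and a blow-down (tangent cone at infinity) argument for odd $k$, which makes the needed competitor implicit. The starting observation is that the horizontal slice of $H_k$ at height $s$ is the oriented real $k$-plane $P_s=e^{is}\R^k\subset\C^k$. Since $e^{i\pi}\R^k=\R^k$ as sets and multiplication by $-1$ on $\R^k$ has degree $(-1)^k$, we have $P_{s+\pi}=(-1)^kP_s$ as oriented planes. This parity is exactly what separates the two cases.

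\emph{Odd $k$.} I would use the blow-downs $T_\lambda=\lambda H_k$ as $\lambda\to 0$.
\begin{enumerate}
\item Show that the densities $\Theta_R=\Mass(H_k\llcorner B_R)/(\omega_{k+1}R^{k+1})$ are bounded below by a positive constant. This is an explicit computation from the parametrization \eqref{eq:helicoid-parametrization}.
\item Show that $T_\lambda$ converges weakly to the zero current. In a ball of radius $1$, $T_\lambda$ consists of about $1/\lambda$ sheets close to $\lambda(P_s\times\{s\})$. Sheets whose $s$-values differ by $\pi$ are almost the same plane but carry opposite orientations, and they are $\lambda\pi$-close in the vertical direction. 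Pairing sheets $s$ and $s+\pi$ and integrating a fixed test form therefore gives a contribution of order $\lambda$ per pair. Summed over the $O(1/\lambda)$ pairs, each weighted by the $\lambda$-scaling, the total tends to $0$.
\item Conclude. If $H_k$ were area-minimizing, then by compactness of minimizing integral currents the limit would be area-minimizing, and mass would converge on balls. That contradicts positive mass density together with the zero limit.
\end{enumerate}
To produce an explicit competitor instead, I would take a large cylinder $\{|u|\le R,\ |s|\le L\}$ and cancel the paired sheets by inserting thin connecting tubes. The area saved is proportional to the volume of the region, while the tubes cost lower order. I expect the current-theoretic version above to be cleaner.

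\emph{Even $k$.} Here $P_{s+\pi}=P_s$ with the same orientation, so the blow-down is the cone $K=\{e^{i\theta}u\}\times\{0\}$ with multiplicity. Its link $(S^{k-1}\times S^1)/\mathbb Z_2$ is orientable, so no obstruction arises. I would look for a closed $(k+1)$-form $\varphi$ on $\R^{2k+1}$ with three properties:
\begin{enumerate}
\item $\varphi$ is invariant under the screw motions $(z,t)\mapsto(e^{ic}z,t+c)$ and under $O(k)$ acting diagonally on $\C^k=\R^k\otimes\C$;
\item $\varphi$ has comass at most $1$;
\item $\varphi$ restricts to the volume form of $H_k$.
\end{enumerate}
The natural ansatz combines the phase-rotated special Lagrangian form $\Omega=dz_1\wedge\cdots\wedge dz_k$ with $dt$. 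Its key identity is
\[
d\,\mathrm{Re}(e^{-ikt}\Omega)=k\,dt\wedge \mathrm{Im}(e^{-ikt}\Omega),
\]
and $\mathrm{Re}(e^{-ikt}\Omega)$ calibrates $P_t$. The tangent plane of $H_k$ at $(ue^{is},s)$ is spanned by $P_s$ and $\partial_t+iue^{is}$. So I would try
\[
\varphi=\frac{dt+\eta}{\sqrt{1+|z|^2}}\wedge\mathrm{Re}(e^{-ikt}\Omega)+\psi ,
\]
where $\eta$ is the $1$-form dual to the rotation field $iz$, and $\psi$ is a correction chosen to make $d\varphi=0$. The evenness of $k$ should enter because $e^{-ikt}$ with $k=2m$ is $\pi$-periodic in $t$, matching the $\pi$-periodicity of $P_t$ as an oriented plane. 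For odd $k$ this consistency fails.

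Alternatively, and more concretely, I would write $H_k$ as an entire graph over a suitable fixed $(k+1)$-plane, using the pairing $\C^{2m}=\C^m\otimes\C^2$. I would then build $\varphi$ as $\star$-dual to a divergence-free unit extension of the normal frame, that is, a calibration from a foliation by screw-rotated and translated copies of $H_k$ together with suitable companions.

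The main obstacle is the comass estimate for $\varphi$ away from $H_k$. Closedness and the restriction to $H_k$ are routine symbolic checks, but bounding the comass of a mixed form on non-Lagrangian $(k+1)$-planes requires a Wirtinger/Harvey–Lawson type inequality. I would first reduce to the screw-invariant slice $t=0$ and use the $O(k)$ symmetry to bring $z$ to the normal form $z=(a_1+ib_1,\dots)$ with a block structure. I would then try to prove the inequality blockwise on $2\times2$ complex blocks, which is where pairing coordinates for even $k$ is essential.
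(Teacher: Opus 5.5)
\textbf{Odd $k$.} Your blow-down argument fails at step~3. The compactness theorem for minimizing currents, and the convergence of mass it gives, need locally uniform mass bounds, and these do not hold here. Indeed $\Mass(H_k\llcorner B_R)=\int_{|u|^2+s^2\le R^2}\sqrt{1+|u|^2}\,du\,ds\approx c_kR^{k+2}$, so the density ratio grows linearly in $R$ and $\Mass(\lambda H_k\llcorner B_1)\approx c_k/\lambda\to\infty$. Step~2 hides this. On $\lambda H_k=\{(ve^{is},\lambda s)\}$ the $s$-direction is $(ive^{is},\lambda)$, whose length is about $|v|$, not $\lambda$. So each half-turn is a $(k+1)$-dimensional piece of mass $O(1)$ lying near the rank-one cone, and $B_1$ contains $O(1/\lambda)$ of them. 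Your pairing gives $O(\lambda)$ per pair, hence only $O(1)$ in total; $T_\lambda\rightharpoonup0$ is true but needs a further averaging argument. With divergent masses, weak convergence does not give flat convergence and the theorem you cite does not apply, so no contradiction follows.

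The gap can be closed in your spirit by using a single turn $S_\lambda=\lambda H_k\llcorner(B_1\cap\{0\le z\le2\pi\lambda\})$. It minimizes for its own boundary, has bounded mass and boundary mass, and has mass bounded below. For odd $k$ one checks $\partial S_\lambda\rightharpoonup0$: the top and bottom discs cancel, and the spherical part tends to the pushforward of $S^{k-1}\times[0,2\pi]$ under $(w,s)\mapsto we^{is}$. That pushforward vanishes because $(w,s)\mapsto(-w,s+\pi)$ has sign $(-1)^k$. Flat convergence plus the isoperimetric inequality then gives fillings of $\partial S_\lambda$ with mass tending to $0$, a contradiction. The paper instead writes the filling explicitly, which your ``connecting tubes'' remark leaves unconstructed. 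It takes $T_R=G_\#[\![B_R\times[0,\pi]^2]\!]$ with $G(u,s,t)=(\cos s\,u,\sin s\,u,s+t)$, which sweeps a half-turn onto its vertical translate. For odd $k$ this gives $\partial T_R=-[\![P_R]\!]+[\![C_R\cup L_R]\!]$ with $\Mass(P_R)\gtrsim R^{k+1}$ and $\Mass(C_R\cup L_R)=O(R^k)$.

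\textbf{Even $k$.} Here you have an ansatz, not a proof, and the ingredient that makes the paper work is missing. The form $\varphi_0=h^{-1}(dt+\eta)\wedge\mathrm{Re}(e^{-ikt}\Omega)$ does have comass $\le1$ and restricts to $d\mu_{H_k}$. But it is far from closed:
\[
d\varphi_0=-h^{-3}\rho\,d\rho\wedge(dt+\eta)\wedge\mathrm{Re}(e^{-ikt}\Omega)+kh^{-1}\,dt\wedge\eta\wedge\mathrm{Im}(e^{-ikt}\Omega).
\]
So the correction $\psi$ and the comass bound for $\varphi_0+\psi$ are the whole problem, and you leave both open. Your parity mechanism is also illusory. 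Since $t$ is a global coordinate, and $\mathrm{Re}(e^{-ik(t+\pi)}\Omega)=(-1)^k\mathrm{Re}(e^{-ikt}\Omega)$ matches $P_{t+\pi}=(-1)^kP_t$ for every $k$, your ansatz makes equal sense for odd $k$, where no calibration can exist; you have not located the step that must fail there.

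Two further points. Full $O(k)$-invariance is impossible: a reflection maps $H_k$ to itself with reversed orientation, which would force $\varphi|_{H_k}=0$, so only $SO(k)$ is admissible. The foliation alternative is not developed, and in codimension $\ge2$ minimal leaves alone do not make the dual form closed.

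In the paper, parity enters through the singular value decomposition of the $2\times k$ matrix with rows $x,y$. The top singular vector $w\in S^{k-1}$ is defined only up to $(s,w,v)\sim(s+\pi,-w,-v)$, and the pulled-back volume form $\Theta$ descends exactly when $k$ is even. The calibration is $\xi=\Theta\wedge dr\wedge\frac{r^{k-1}}{\sqrt{1+r^2}}(r^2\,ds+dz)$ with $r=\rho\cos2t$. It is singular along $Z=\{\sigma_1=\sigma_2\}\times\R$, and closedness across $Z$ comes from an $L^\infty$ primitive and the codimension-two tube estimate. The comass bound reduces to Lemma~\ref{lem:trig-inequality}, which holds only for $k\ge6$. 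The case $k=4$ needs a separate smooth calibration built from Kraines' quaternionic $4$-form.
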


The area-minimizing result follows from the construction of an explicit calibration in the sense of Harvey and Lawson \cite{HarveyLawson}. The construction of this calibration is closely related to the calibrations developed by Lawlor \cite{LawlorCriterion} and by Kerckhove and Lawlor \cite{KerckhoveLawlor} for the determinantal variety $C(2,k,1)$; see also Morgan's survey \cite{MorganCalibrations} for a broader account of calibration methods. There is a uniform construction of the calibration for all even $k\geq 6$. However, this construction breaks down when $k=4$ due to the failure of a key lemma (Lemma \ref{lem:trig-inequality}). To handle this exceptional case, we exploit the quaternionic structure in dimension four to construct a different calibration.

When $k\geq 3$ is odd, we identify a portion of $H_k$ and construct a competitor with the same boundary but strictly smaller area, thereby showing that $H_k$ is not area-minimizing.

Theorems~\ref{thm:stability} and \ref{thm:area-minimizing} together exhibit a striking dependence on dimension: $H_k$ becomes stable at $k=3$, while its area-minimization property for $k\geq 3$ is determined entirely by the parity of $k$. 

Finally, we study the broader family of generalized helicoids introduced by Barbosa--Dajczer--Jorge and by Bryant \cite{BarbosaDajczerJorge,BryantAustere}.   For positive real parameters $\bla=(\lambda_1,\ldots,\lambda_k)$, we write $\mathcal{H}_{\bla}$ for the generalized helicoids parametrized by
\begin{equation}\label{eq:general-helicoid}
  (u_1,\ldots,u_k,s)\mapsto \bigl(u_1e^{i\lambda_1s},\ldots,
         u_ke^{i\lambda_ks},s\bigr),
\end{equation}
so that $H_k$ is the case $\lambda_1=\cdots=\lambda_k=1$. In our earlier paper \cite{TsaiTsuiWanWang}, we obtained a sufficient condition, stated in \eqref{eq:entire-criterion}, for a generalized helicoid to be realized as an entire minimal graph. Here we prove that this condition is also necessary.  In particular, we conclude that entireness of $H_k$ is again determined by the parity of $k$: $H_k$ is entire if and only if $k$ is even. Regarding the stability of generalized helicoids, we show that, in the moduli space of generalized helicoids, the unstable members form an open subset.  We also give various criteria, stated directly in terms of the parameters $\lambda_1,\ldots,\lambda_k$, that distinguish stable generalized helicoids from unstable ones. 

For the family of $(k+1)$-dimensional helicoids $H_k$, the preceding results are summarized in Table \ref{tab:helicoid_properties}.

\begin{table}[H]
\centering
\renewcommand{\arraystretch}{1.2}
\begin{tabular}{cccc}
\hline
$k$  & \textbf{Stability} & \textbf{Area-Minimizing} & \textbf{Entire} \\
\hline
$1$               & No  & No  & No \\
\hline
$2$                & No  & No & Yes\\
\hline
Odd $k \ge 3$                  & Yes & No & No\\
\hline
Even $k \ge 4$    & Yes & Yes & Yes \\
\hline
\end{tabular}
\caption{Variational and global properties of $H_k \subset \mathbb{R}^{2k+1}$}
\label{tab:helicoid_properties}
\end{table}
In particular, we obtain the following. 
\begin{corollary}
    For every even $k\geq 4$, the $(k+1)$-dimensional helicoid $H_k$ is an entire minimal graph in $\mathbb{R}^{2k+1}$ that is area-minimizing. 
\end{corollary}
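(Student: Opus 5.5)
This corollary follows by combining two of the stated results; no new argument is needed. Area minimization comes from Theorem~\ref{thm:area-minimizing}: for every even $k\geq 4$ (so in particular $k\geq 3$), $H_k$ is area-minimizing, as certified by the explicit calibration. Entireness comes from the characterization of entire generalized helicoids announced above: $H_k$ is an entire minimal graph in $\R^{2k+1}$ if and only if $k$ is even. So the corollary is just the intersection of these two statements for even $k\geq 4$.

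For completeness, I would also check the entire-graph part directly, as a sanity check on the sufficient criterion \eqref{eq:entire-criterion} from \cite{TsaiTsuiWanWang}. When $k$ is even, pair the coordinates as $(u_{2j-1},u_{2j})$ and write $u_{2j-1}+iu_{2j}=r_je^{i\theta_j}$. The natural base $(k+1)$-plane is
\[
P=\{(z,t): z_{2j}=i\,z_{2j-1}\ \text{ for all } j\}\cong\C^{k/2}\times\R.
\]
One then shows that orthogonal projection of $H_k$ onto $P$ is a diffeomorphism. This reduces to the following one-variable check. In each pair, the projected point is $\tfrac12\bigl(u_{2j-1}-iu_{2j}\bigr)e^{is}$, up to the identification of $P$ with $\C^{k/2}$. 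Together with the $t=s$ coordinate, this gives a bijection from $(u,s)\in\R^k\times\R$ onto $\C^{k/2}\times\R$. The complementary components are then smooth functions of the base point, so $H_k$ is the graph of a smooth map $P\to P^\perp$ defined on all of $P$. The one point that needs care is choosing $P$ so that the phases $e^{is}$ cancel correctly. For $k$ odd no such pairing exists, which is consistent with the parity dichotomy.

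The main obstacle sits entirely in the cited results, chiefly the calibration behind Theorem~\ref{thm:area-minimizing}, with its exceptional quaternionic construction at $k=4$. Given those results, the corollary itself is immediate.
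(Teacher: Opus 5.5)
Your proposal is correct and takes the same route as the paper: the corollary is obtained by combining Theorem~\ref{thm:area-minimizing} (the calibrations of Sections~\ref{sec:calibration-Hk-even} and~\ref{sec:calibration-H4}) with the entireness corollary of Section~\ref{sec:entireness}. Your direct check of the graph property is also valid. The plane $P$ corresponds to taking $B$ block-diagonal with blocks $\bigl(\begin{smallmatrix}0&1\\-1&0\end{smallmatrix}\bigr)$ in \eqref{eq:entire-criterion}, which makes the determinant identically $1$. This makes the even-$k$ entireness self-contained, whereas the paper cites \cite[Theorem~1.8]{TsaiTsuiWanWang} for it.
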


Section \ref{sec:stability-reduction} reduces the stability problem for generalized helicoids to a functional inequality for vector-valued functions on $\R^k$. Section \ref{sec:Hk-stability/instability/entireness} proves this functional inequality to establish the stability of $\Hk$ for $k\geq 3$. Sections \ref{sec:calibration-Hk-even} and \ref{sec:calibration-H4} prove the area-minimizing property of $H_k$ for even $k\geq 6$ and of $H_4$ by constructing respective calibrations, while Section \ref{sec:competitor-Hk-odd} proves that $H_k$ is not area-minimizing for odd $k$ by constructing explicit competitors. Finally, Section \ref{sec:Hardy-anisotropic} identifies a stable region in the moduli space of generalized helicoids, Section \ref{sec:unstable} develops the instability theory, including the instability of $H_2$ and the openness of the unstable region, while Section \ref{sec:entireness} studies the entireness problem and determines the entireness of $H_k$ for all $k$.

\medskip

{\it Disclosure of AI use:} The authors used AI tools, including ChatGPT 5.6 Sol and Claude Fable 5, to assist with literature searches, explore ideas, and perform symbolic calculations in the preparation of this work. All proofs and calculations, however, were independently verified by the authors.

\section{Reduction of the stability condition for generalized helicoids}\label{sec:stability-reduction}

In this section, we study the second variation of generalized helicoids. In particular, we reduce the stability condition to a functional inequality for vector-valued functions on $\R^k$. Identify $\mathbb R^{2k+1}$ with $\mathbb C^k \times \mathbb R$. Denote $\operatorname{diag}(\bla)$ by $\Lambda$.  The parametrization \[F_{\bla}(u,s) = \bigl(e^{i\Lambda s}u, \, s\bigr)\] of generalized helicoid $\mathcal{H}_{\bla}$ in \eqref{eq:general-helicoid} has derivatives
\[
 \pr_{u_j} F_{\bla} = \bigl(e^{i\lambda_j s}\mathbf{e}_j, \, 0\bigr), \qquad
 \pr_s F_{\bla} = \bigl(i\Lambda e^{i\Lambda s} u, \, 1\bigr).
\]
Set $h^2 = 1 + |\Lambda u|^2$, where $|\Lambda u|^2 = \sum_{j=1}^k(\lambda_j u_j)^2$.
An orthonormal tangent frame on $\mathcal{H}_{\bla}$ is $E_j = \pr_{u_j} F_{\bla}$ ($1 \le j \le k$) and $E_{k+1} = h^{-1}\pr_s F_{\bla}$, giving
\begin{equation*}
 g = \sum_{j=1}^k du_j^2 + h^2 ds^2, \qquad d\mu = h \, du \, ds.
\end{equation*}
Second derivatives satisfy
\[
 \pr_{u_j u_j}^2 F_{\bla} = 0, \qquad
 \pr_{ss}^2 F_{\bla} = \bigl(-\Lambda^2 e^{i\Lambda s} u, \, 0\bigr) = -\sum_{j=1}^k \lambda_j^2 u_j \pr_{u_j} F_{\bla} \in T\mathcal{H}_{\bla}.
\]
Since $g^{js} = 0$, the mean curvature of  $\mathcal{H}_{\bla}$ vanishes:
\[
 H = \left(\sum_{j=1}^k \pr_{u_j u_j}^2 F_{\bla} + h^{-2}\pr_{ss}^2 F_{\bla}\right)^\perp = 0.
\]

We now study the stability of $\mathcal{H}_{\bla}$ as a minimal submanifold. Recall that the second variation of area associated with a normal vector field $V$ is given by

\begin{equation*}
 Q(V,V) = \int_{\mathcal{H}_{\bla}}  \sum_{a=1}^{k+1}|\nabla^\perp_{E_a} V|^2-\sum_{a,b=1}^{k+1} \langle \II(E_a, E_b), V \rangle^2,
\end{equation*} 
where $\{E_a\}_{a=1}^{k+1}$ is any local orthonormal tangent frame and $\II$ denotes the second fundamental form. The minimal submanifold $\mathcal{H}_{\bla}$ is stable if and only if $Q(V, V)\geq 0$ for every compactly supported normal field $V$. It turns out that the nonnegativity of $Q$ is equivalent to the nonnegativity of $q_{\bla}$ in the following definition.

\begin{definition}
    For $ f=(f^1,\ldots, f^k) \in C_c^\infty(\mathbb R^k; \mathbb R^k)$, define
\begin{equation}\label{eq:L^2-functional}
 q_{\bla}[f] = \int_{\mathbb R^k} h \left[
 \sum_{j=1}^k |\nabla f^j|^2
 + \left|\nabla\bigl(\Lambda u\cdot  f\bigr)\right|^2
 - \frac{3|\Lambda  f|^2}{h^2}
 \right], 
\end{equation}
where $u_1, \ldots, u_k$ are standard coordinates on $\mathbb{R}^k$, $\Lambda u\cdot f$ is the scalar function $\sum_{j=1}^k \lambda_j u_j f^j$, $|\Lambda f|^2=\sum_{j=1}^k (\lambda_j f^j)^2$,  $h = \sqrt{1 + |\Lambda u|^2}=\sqrt{1 +\sum_{j=1}^k (\lambda_j u_j)^2 }$, and $\nabla$ is the standard gradient operator for scalar functions on $\mathbb{R}^k$.

\end{definition}

\begin{proposition}\label{prop:stability-reduction} 
The generalized helicoid $\mathcal{H}_{\bla}$ is stable if and only if 
\begin{equation}\label{eq:stability-cond}
 q_{\bla}[f] \ge 0
 \quad\forall  f \in C_c^\infty(\mathbb R^k; \mathbb R^k).
\end{equation}
Moreover, if \eqref{eq:stability-cond} fails, then $\ind \mathcal{H}_{\bla} = \infty$.
\end{proposition}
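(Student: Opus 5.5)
The plan is to write an arbitrary normal field in a global, screw-motion-equivariant (not orthonormal) normal frame and compute $Q$ explicitly in that frame. The result should be that $Q$ equals an $s$-integral of $q_{\bla}$ plus a nonnegative $\pr_s$-term, after which both directions are soft.

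\textbf{Normal frame.} For $1\le j\le k$ set
\[
 \nu_j = \bigl(i e^{i\lambda_j s}\mathbf{e}_j,\, -\lambda_j u_j\bigr).
\]
Each $\nu_j$ is orthogonal to every $E_l$, since the real part of $i$ vanishes. It is also orthogonal to $\pr_s F_{\bla}$, because $\lambda_j u_j-\lambda_j u_j=0$. The $\nu_j$ are linearly independent, so they span the rank-$k$ normal bundle. Every normal field is therefore $V=\sum_j f^j(u,s)\nu_j$, with $f\in C^\infty_c(\R^{k+1};\R^k)$ whenever $V$ is compactly supported. We also have $|V|^2=|f|^2+(\Lambda u\cdot f)^2$.

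\textbf{Rewriting $Q$.} For a normal field $V$ we have $\langle (D_{E_a}V)^T,E_b\rangle=-\langle V,\II(E_a,E_b)\rangle$, where $D$ is the flat connection of $\R^{2k+1}$. Hence
\[
 Q(V,V)=\int_{\mathcal H_{\bla}}\sum_a|D_{E_a}V|^2-2\sum_{a,b}\langle\II(E_a,E_b),V\rangle^2 .
\]

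\textbf{Second fundamental form.} We have $\pr^2_{u_ju_l}F_{\bla}=0$, and $\pr^2_{ss}F_{\bla}$ is tangent. So only the mixed terms survive:
\[
 \langle \II(E_j,E_{k+1}),V\rangle = h^{-1}\langle \pr^2_{u_js}F_{\bla},V\rangle = h^{-1}\bigl\langle(i\lambda_je^{i\lambda_js}\mathbf e_j,0),V\bigr\rangle = h^{-1}\lambda_j f^j .
\]
Counting both orders $(a,b)$, this gives $\sum_{a,b}\langle\II,V\rangle^2=2h^{-2}|\Lambda f|^2$.

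\textbf{Derivatives of $V$.}
In the $u$-directions, $D_{u_l}V=\sum_j\pr_lf^j\,\nu_j+(0,-\lambda_lf^l)$. Its last component is exactly $-\pr_l(\Lambda u\cdot f)$, and the $\C^k$ components are orthonormal. Therefore
\[
 \sum_l|D_{u_l}V|^2=\sum_j|\nabla f^j|^2+|\nabla(\Lambda u\cdot f)|^2 .
\]
In the $s$-direction,
\[
 D_{s}V=\sum_j\pr_sf^j\nu_j+\bigl(-\sum_j\lambda_jf^je^{i\lambda_js}\mathbf e_j,0\bigr).
\]
The cross terms vanish because $\mathrm{Re}\,\langle i w, w\rangle=0$. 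This gives
\[
 |D_{E_{k+1}}V|^2=h^{-2}\bigl(|\pr_sf|^2+(\Lambda u\cdot\pr_s f)^2+|\Lambda f|^2\bigr).
\]

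\textbf{Assembling.} Using $d\mu=h\,du\,ds$, the terms in $|\Lambda f|^2$ combine as $h^{-2}-4h^{-2}=-3h^{-2}$. We arrive at the key identity
\[
 Q(V,V)=\int_{\R}q_{\bla}[f(\cdot,s)]\,ds+\int_{\R^{k+1}}h^{-1}\bigl(|\pr_sf|^2+(\Lambda u\cdot\pr_sf)^2\bigr)\,du\,ds .
\]

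\textbf{Sufficiency.} If \eqref{eq:stability-cond} holds, the first integral is nonnegative for every compactly supported $f$, and the second integral is always nonnegative. Hence $Q\ge0$ and $\mathcal H_{\bla}$ is stable.

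\textbf{Necessity and infinite index.} Suppose $q_{\bla}[f_0]<0$ for some $f_0$. Fix a cutoff $\varphi\in C^\infty_c(\R)$ and take $f(u,s)=\varphi(s/R)f_0(u)$. The identity gives
\[
 Q = R\|\varphi\|_{L^2}^2\,q_{\bla}[f_0]+O(R^{-1}),
\]
which is negative for $R$ large. Translating $\varphi$ in $s$ by large amounts produces infinitely many such fields with pairwise disjoint supports. On their span $Q$ is a direct sum of negative terms, so it is negative definite there, and $\ind\mathcal H_{\bla}=\infty$.

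\textbf{Main obstacle.} The only delicate point is the bookkeeping in the curvature and derivative computation. In particular, one must confirm that the $(0,-\lambda_lf^l)$ term assembles exactly into $\nabla(\Lambda u\cdot f)$, and that the mixed $s$-cross terms vanish, so that the $s$-dependence contributes only a nonnegative square. If a cross term survived, I would instead integrate it by parts in $s$, using that it is a total derivative by the screw-motion invariance.
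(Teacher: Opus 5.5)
Your proof is correct and follows the paper's argument. It uses the same normal parametrization $V=(ie^{i\Lambda s}f,\,-\Lambda u\cdot f)$, the same key identity $Q(V,V)=\int_{\R} q_{\bla}[f(\cdot,s)]\,ds+\int h^{-1}\bigl(|\pr_s f|^2+(\Lambda u\cdot\pr_s f)^2\bigr)\,du\,ds$, and the same $s$-stretched, $s$-translated test fields with disjoint supports to get infinite index. The only difference is bookkeeping: you use full ambient derivatives together with $\sum_a|(D_{E_a}V)^T|^2=\sum_{a,b}\langle\II(E_a,E_b),V\rangle^2$, so the $-3h^{-2}|\Lambda f|^2$ arises as $h^{-2}-4h^{-2}$, whereas the paper subtracts the tangential components directly in each coordinate direction.
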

\begin{proof}
\smallskip\noindent\emph{Step 1: Parametrization of normal vector fields.}
Given $\tilde f \in C_c^\infty(\mathbb R^k \times \mathbb R; \mathbb R^k)$, define
\begin{equation}\label{eq:normal-field}
 V(u,s) = \bigl(i e^{i\Lambda s}\tilde f(u,s), \, -(\Lambda u)\cdot \tilde f(u,s)\bigr) \in \mathbb C^k \times \mathbb R.
\end{equation}
Taking ambient inner products with the tangent vectors gives
\[
 \langle V, \pr_{u_j} F_{\bla} \rangle = \operatorname{Re}\langle i e^{i\lambda_j s}\tilde f^j, e^{i\lambda_j s}\rangle_{\mathbb C} = 0,
\]
\[
 \langle V, \pr_s F_{\bla} \rangle = \operatorname{Re}\langle i e^{i\Lambda s}\tilde f, i\Lambda e^{i\Lambda s}u\rangle_{\mathbb C^k} - (\Lambda u)\cdot \tilde f = 0.
\]
Thus $V \in T^\perp \mathcal{H}_{\bla}$ is normal. Since $\dim(T^\perp \mathcal{H}_{\bla}) = k$, the representation \eqref{eq:normal-field} uniquely parametrizes all compactly supported normal fields. 

\smallskip\noindent\emph{Step 2: Computation of the second fundamental form.}
Differentiating the parametrization yields
\[
 \pr_{u_j u_m}^2 F_{\bla} = 0, \qquad \pr_{ss}^2 F_{\bla} \in T\mathcal{H}_{\bla}, \qquad \pr_{u_j s}^2 F_{\bla} = \bigl(i\lambda_j e^{i\lambda_j s}\mathbf{e}_j, \, 0\bigr).
\]
Taking normal projections gives $\II(E_j, E_m) = 0$ and $\II(E_{k+1}, E_{k+1}) = 0$. For the cross terms,
\[
 \langle \II(E_j, E_{k+1}), V \rangle = h^{-1} \langle \pr_{u_j s}^2 F_{\bla}, V \rangle = h^{-1} \operatorname{Re}\langle i\lambda_j e^{i\lambda_j s}\mathbf{e}_j, i e^{i\lambda_j s}\tilde f^j \rangle = \frac{\lambda_j \tilde f^j}{h}.
\]
Therefore, 
\begin{equation}\label{eq:II-squared}
 \sum_{a,b=1}^{k+1} \langle \II(E_a, E_b), V \rangle^2 = 2\sum_{j=1}^k \langle \II(E_j, E_{k+1}), V \rangle^2 = \frac{2|\Lambda \tilde f|^2}{h^2}.
\end{equation}

\smallskip\noindent\emph{Step 3: Derivatives along $u_j$.}
Differentiating $V$ along $\pr_{u_j}$ gives
\[
 \overline\nabla_{\pr_{u_j}} V = \bigl(i e^{i\Lambda s}\pr_{u_j} \tilde f, \, -\pr_{u_j}\bigl((\Lambda u)\cdot \tilde f\bigr)\bigr).
\]
Its tangential components are
\[
 \langle \overline\nabla_{\pr_{u_j}} V, E_m \rangle = 0 \quad (1 \le m \le k),
\]
\[
 \langle \overline\nabla_{\pr_{u_j}} V, E_{k+1} \rangle = h^{-1}\left( (\Lambda u)\cdot \pr_{u_j} \tilde f - \pr_{u_j}\bigl((\Lambda u)\cdot \tilde f\bigr) \right) = -h^{-1}\lambda_j \tilde f^j.
\]
Subtracting the tangential component from the ambient norm $|\overline\nabla_{\pr_{u_j}} V|^2$ gives
\begin{equation}\label{eq:u-deriv}
 |\nabla^\perp_{E_j} V|^2 = |\overline\nabla_{\pr_{u_j}} V|^2 - \langle \overline\nabla_{\pr_{u_j}} V, E_{k+1} \rangle^2 = |\pr_{u_j} \tilde f|^2 + \left|\pr_{u_j}\bigl((\Lambda u)\cdot \tilde f\bigr)\right|^2 - \frac{\lambda_j^2(\tilde f^j)^2}{h^2}.
\end{equation}

\smallskip\noindent\emph{Step 4: Derivative along $s$.}
Differentiating $V$ along $\pr_s$ gives the orthogonal decomposition into normal and tangential parts:
\[
 \overline\nabla_{\pr_s} V = \underbrace{\bigl(i e^{i\Lambda s}\pr_s \tilde f, \, -(\Lambda u)\cdot \pr_s \tilde f\bigr)}_{\in T^\perp \mathcal{H}_{\bla}} - \underbrace{\sum_{j=1}^k \lambda_j \tilde f^j \pr_{u_j} F_{\bla}}_{\in T \mathcal{H}_{\bla}}.
\]
Taking the norm of the normal component and scaling by $E_{k+1} = h^{-1}\pr_s$ gives
\begin{equation}\label{eq:s-deriv}
 |\nabla^\perp_{E_{k+1}} V|^2 = h^{-2}\left|(\overline\nabla_{\pr_s} V)^\perp\right|^2 = \frac{|\pr_s \tilde f|^2 + \bigl((\Lambda u)\cdot \pr_s \tilde f\bigr)^2}{h^2}.
\end{equation}

\smallskip\noindent\emph{Step 5: Second variation and stability.}
Combining \eqref{eq:u-deriv} and \eqref{eq:s-deriv}, we obtain
\[
 |\nabla^\perp V|^2 = \sum_{j=1}^k |\pr_{u_j} \tilde f|^2 + \left|\nabla_u\bigl((\Lambda u)\cdot \tilde f\bigr)\right|^2 + \frac{|\pr_s \tilde f|^2 + \bigl((\Lambda u)\cdot \pr_s \tilde f\bigr)^2 - |\Lambda \tilde f|^2}{h^2}.
\]
Subtracting \eqref{eq:II-squared} and integrating with respect to $d\mu = h \, du \, ds$ yields
\begin{equation*}
 Q(V,V) = \int_{\mathbb R} q_{\bla}[\tilde f(\cdot, s)] \, ds + \int_{\mathbb R^{k+1}} h^{-1} \left[|\pr_s \tilde f|^2 + \bigl((\Lambda u)\cdot \pr_s \tilde f\bigr)^2\right] du \, ds.
\end{equation*}
If $q_{\bla}[ f] \ge 0$ for all $ f \in C_c^\infty(\mathbb R^k; \mathbb R^k)$, then $Q(V,V) \ge 0$, establishing stability.

Conversely, if $q_{\bla}[f_0] < 0$ for some $ f_0$, choose $0 \ne \chi \in C_c^\infty(-1,1)$ and set 
\[\tilde f_m(u,s) = \chi\left(\frac{s - 3mL}{L}\right) f_0(u)\]
for $m \in \mathbb Z$. Then for $V_m = \bigl(i e^{i\Lambda s}\tilde f_m, \, -(\Lambda u)\cdot \tilde f_m\bigr)$
\[
 Q(V_m,V_m) = L \|\chi\|_2^2 q_{\bla}[ f_0] + L^{-1}\|\chi'\|_2^2 \int_{\mathbb R^k} h^{-1}\left[| f_0|^2 + \bigl((\Lambda u)\cdot f_0\bigr)^2\right] du.
\]
For sufficiently large $L$, $Q(V_m,V_m) < 0$. Since $\{V_m\}_{m\in\mathbb Z}$ have disjoint compact supports in $s$, they span an infinite-dimensional negative subspace, proving $\ind \mathcal{H}_{\bla} = \infty$.
\end{proof}

\section{Stability of \texorpdfstring{$H_k$}{Hk} for every \texorpdfstring{$k\geq3$}{k >=3}}\label{sec:Hk-stability/instability/entireness}

We prove the following sharp functional inequality for vector-valued functions on $\R^k$ for every $k\ge3$.

\begin{proposition} \label{prop:sharp-hardy}
    Let $k\ge3$, let $x=(x_1,\ldots,x_k)$ be the standard coordinates on $\R^k$, and set
    \[
        h(x)=\sqrt{1+|x|^2}, \qquad c_k=\frac{(k-1)^2+8}{4}.
    \]
    Then every $f=(f^1,\ldots,f^k)\in C_c^\infty(\R^k;\R^k)$ satisfies
    \begin{equation}\label{eq:hardy-sharp}
       \mathcal Q[f]:= \int_{\R^k}h \left(\sum_{j=1}^k|\nabla f^j|^2 + |\nabla(x\cdot f)|^2 - c_k\frac{|f|^2}{h^2} \right)\ge0.
    \end{equation}

\end{proposition}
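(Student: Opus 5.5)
The plan is to prove \eqref{eq:hardy-sharp} by decomposing $f$ into vector spherical harmonics on $\R^k\setminus\{0\}$. This reduces $\mathcal Q$ to a family of one-dimensional quadratic forms in $r=|x|$, and each of those is then bounded below by a weighted Hardy inequality proved with a ground-state substitution. I have not carried out the mode-by-mode computation, so the claim that the lowest mode is the extremal one, with constant exactly $c_k$, is the point that still has to be verified.

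\emph{Step 1: radial/tangential splitting.} Write $\nu=x/r$ and $f=a\,\nu+f_T$ with $f_T\cdot\nu=0$, so that $x\cdot f=ra$. The ambient Dirichlet energy $\sum_j|\nabla f^j|^2$ of a vector field, written in polar coordinates, equals $|\partial_r a|^2+|\partial_r f_T|^2$ plus $r^{-2}$ times the energy on $S^{k-1}$ of the vector field $(a,f_T)$ restricted to spheres. That spherical energy carries zeroth-order terms such as $(k-1)a^2$ and couplings of the form $a\,\operatorname{div}_{S}f_T$. Similarly,
\[
|\nabla(ra)|^2=(a+r\partial_r a)^2+|\nabla_{S}a|^2 .
\]
Next I expand $a$ in scalar spherical harmonics of degree $\ell$, and $f_T$ in gradient fields $\nabla_S Y_\ell$ plus divergence-free tangential fields. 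The $\ell$-modes then decouple, and on each mode $\mathcal Q$ becomes a quadratic form in at most two radial functions, with weight $h=\sqrt{1+r^2}$ and measure $r^{k-1}dr$.

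\emph{Step 2: the one-dimensional Hardy inequality.} The basic tool is
\[
\int_0^\infty h\,|\psi'|^2 r^{k-1}\,dr\;\ge\;\int_0^\infty V\,\psi^2 r^{k-1}\,dr,
\qquad V=-\frac{(h\,r^{k-1}w')'}{r^{k-1}w},
\]
which holds for any positive $w$. It is proved by writing $\psi=w\phi$ and integrating by parts. The natural choice is $w=h^{-(k-1)/2}$ or a nearby power. As $r\to\infty$ this gives $V\approx\frac{(k-1)^2}{4}\,h^{-1}$, which accounts for the $(k-1)^2/4$ part of $c_k$. The remaining $8/4=2$ must come from the terms involving $a$: the $|\nabla(ra)|^2$ term and the zeroth-order curvature term $(k-1)a^2/r^2$ in the vector energy.

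\emph{Step 3: the worst mode.} I expect the extremal mode to be the radial one, $f=a(r)\nu$. There the form is
\[
\int h\Bigl[a'^2+\frac{(k-1)a^2}{r^2}+(a+ra')^2-c_k\frac{a^2}{h^2}\Bigr].
\]
The term $(a+ra')^2$ gives roughly $(1+r^2)a'^2$ plus a cross term, and integrating $2raa'$ by parts against the weight yields an extra $a^2$ term. Together these should produce the surplus constant $2$. The plan is to choose a ground state of the form $w=r^{\alpha}h^{\beta}$ so that the resulting potential dominates $c_k h^{-2}$ pointwise, and then check that the higher modes and the purely tangential modes have strictly larger potentials; the divergence-free tangential fields are the easiest, since they do not interact with the $x\cdot f$ term.

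\emph{Main obstacle.} The hardest part is the coupled modes, where $a$ and the gradient part of $f_T$ interact through $a\operatorname{div}_Sf_T$. I would first try completing the square in the $2\times2$ system for each $\ell\ge1$ and checking that the resulting matrix potential is at least $c_k h^{-2}\,\mathrm{Id}$, using the extra $\ell(\ell+k-2)/r^2$ angular energy. If that does not close, I would try a pointwise inequality that discards part of $|\nabla(x\cdot f)|^2$ and reduces to the scalar estimate of Step 2. The hypothesis $k\ge3$ should enter through the positivity of the boundary and singular terms at $r=0$, which require the Hardy constant $(k-2)^2/4>0$.
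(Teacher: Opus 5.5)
There is a genuine gap. Your splitting $f=a\nu+f_T$ in Step~1 and the ground state $h^{-(k-1)/2}$ in Step~2 are exactly what the paper uses. But the decisive step, a lower bound with constant exactly $c_k$, is left unverified. Worse, the heuristic you use to organize it points to the wrong extremal, so the checks you plan in Step~3 would fail.

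\textbf{The radial mode is not critical.} Since $|\nabla(ra)|^2=(a+r\partial_ra)^2+|\nabla_Sa|^2$, both the radial and the angular derivatives of $a$ effectively carry the weight $h^3$ instead of $h$. Integrate the cross term $2ra\,\partial_ra$ by parts and complete the square with $\partial_ra+\frac{(k+1)r}{2h^2}a$. For $f=a(r)\nu$ you are then left, even after subtracting the $c_k$-term, with the positive potential
\[
\frac{4(k-1)+(k^2+4k-9)r^2+(k-1)^2r^4}{4h}\,r^{k-3}a^2 .
\]
For large $r$ this is of order $h\,r^{k-1}a^2$, a factor $\sim r^2$ larger than the subtracted $c_kh^{-1}r^{k-1}a^2$.

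\textbf{The sharp configuration is purely tangential.} Take $a\equiv0$ and $f=\phi(r)(e_1-\omega_1\omega)=\phi(r)\nabla_S\omega_1$, with $\phi=r^{-(k-1)/2}\psi(\log(r/R))$, $R\to\infty$, and $\psi$ spread out. Then $x\cdot f=0$ and the radial Hardy part gives $\frac{(k-1)^2}{4}$. The remaining $2$ comes from the angular energy of a tangent field,
\[
\int_{S^{k-1}}\bigl(|F|^2+|DF|^2\bigr)\,d\omega\ge2\int_{S^{k-1}}|F|^2\,d\omega,
\]
with equality for $F=\nabla_S\omega_1$. It does not come from the $a$-terms. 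So your final check, that the tangential modes have strictly larger potentials, is false. The degree-one gradient fields asymptotically saturate $c_k$, and for $k=3$ so do the Killing fields.

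\textbf{The missing spectral estimate.} The paper gets the bound above from two facts. First, the Bochner--Weitzenb\"ock identity $\int|DF|^2=\int\bigl((\dv_SF)^2+|dF^\flat|^2-(k-2)|F|^2\bigr)$. Second, the gap $\int\bigl((\dv_SF)^2+|dF^\flat|^2\bigr)\ge(k-1)\int|F|^2$, whose proof needs $(k-1)/(2(k-2))\le1$. This is where $k\ge3$ enters, not through a Hardy constant at the origin. All boundary terms at $r=0$ carry a factor $r^k$ because $f$ is smooth. For $k=2$ the rotation field $\phi(r)e_\theta$ is harmonic on $S^1$ and has angular constant $1$. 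It therefore gives only $\frac14+1<\frac94=c_2$.

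\textbf{Handling the coupling.} With this estimate in hand, the coupling $4h\,r^{k-3}a\,\dv_SF$ that you flag as the main obstacle is handled without a full harmonic decomposition:
\begin{itemize}
\item The spherical mean of $a$ does not couple, since $\int_{S^{k-1}}\dv_SF\,d\omega=0$.
\item The $h^3$-weighted term $|\nabla_Sa|^2$, together with the scalar gap $k-1$, gives the mean-zero part of $a$ a coefficient of at least $(k-1)h^3$.
\item The angular $F$-terms keep a positive multiple $\gamma$ of $(\dv_SF)^2+|dF^\flat|^2$.
\item A final square completion then needs $\beta\gamma\ge4h^2$. This is an explicit polynomial check, with a case split on the sign of the residual $|F|^2$-coefficient, which can be negative only for $k\ge14$.
\end{itemize}
A mode-by-mode proof along your lines could work. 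It needs these ingredients identified and these computations carried out, and as written the proposal does not establish the inequality.
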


The proof separates the radial and spherical behavior of $f$. We begin with the spherical estimates.

We write $r=|x|$ and $x=r\omega$, where $\omega\in S^{k-1}$.  On $S^{k-1}$, let $\nabla_S$, $\dv_S$, and $D$ denote the gradient, divergence, and Levi--Civita connection, respectively.  If $F$ is tangent to $S^{k-1}$, let $F^\flat$ be its metric-dual one-form.  We use the convention
\[
 |dF^\flat|^2
 =\sum_{i<j}(D_iF_j-D_jF_i)^2
\]
in a local orthonormal frame.

\begin{lemma}
\label{lem:sphere}
Let $k\ge3$.  First, every smooth function $u:S^{k-1}\to\R$ with zero spherical mean satisfies
\begin{equation}\label{eq:scalar-gap}
\int_{S^{k-1}}|\nabla_Su|^2\dd\omega
\ge
(k-1)\int_{S^{k-1}}u^2\dd\omega.
\end{equation}
Second, every smooth tangent vector field $F$ on $S^{k-1}$ satisfies
\begin{equation}\label{eq:oneform-gap}
\int_{S^{k-1}}
\left((\dv_SF)^2+|dF^\flat|^2\right)\dd\omega
\ge
(k-1)\int_{S^{k-1}}|F|^2\dd\omega.
\end{equation}

\end{lemma}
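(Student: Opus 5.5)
The plan is to prove both inequalities by spectral decomposition on the round sphere $S^{k-1}$.

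For \eqref{eq:scalar-gap}, expand $u$ in spherical harmonics. The eigenvalues of $-\Delta_S$ on $S^{k-1}$ are $\ell(\ell+k-2)$ for $\ell\ge0$. Zero mean removes $\ell=0$, and the first nonzero eigenvalue is $\ell=1$, which gives $k-1$. The Rayleigh quotient (equivalently, Parseval) then yields the inequality, with equality exactly for restrictions of linear functions.

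For \eqref{eq:oneform-gap}, view $\alpha=F^\flat$ as a one-form. With the stated convention for $|dF^\flat|^2$,
\[
\int\bigl((\dv_SF)^2+|dF^\flat|^2\bigr)=\int\bigl(|\delta\alpha|^2+|d\alpha|^2\bigr)=\int\langle\Delta_H\alpha,\alpha\rangle,
\]
where $\Delta_H=d\delta+\delta d$ is the Hodge Laplacian. It therefore suffices to show that the spectrum of $\Delta_H$ on one-forms of $S^{k-1}$ is bounded below by $k-1$. I will use the Hodge decomposition $\alpha=df+\beta$ with $\delta\beta=0$; there are no harmonic one-forms since $k-1\ge2$. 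The two pieces are $L^2$-orthogonal, and $\Delta_H$ preserves each piece, so the pieces can be treated separately.

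\emph{Exact part.} Take $f$ with zero mean. Then $\Delta_H df=d\Delta f$, so the eigenvalues of $\Delta_H$ on exact forms coincide with the nonzero eigenvalues $\ell(\ell+k-2)$ of the scalar Laplacian, which are at least $k-1$. Concretely,
\[
\int|\delta df|^2=\int(\Delta_S f)^2\ge(k-1)\int|\nabla_S f|^2,
\]
which follows from the spectral expansion since each eigenvalue is $\ge k-1$.

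\emph{Coclosed part.} Use the Bochner--Weitzenböck formula $\Delta_H\beta=D^*D\beta+\mathrm{Ric}(\beta)$, where $\mathrm{Ric}=(k-2)g$ on $S^{k-1}$. This gives only $k-2$, so a sharper estimate is needed. I will combine Bochner with the Kato-type inequality for coclosed forms, $|D\beta|^2\ge\frac12|d\beta|^2$, which comes from the decomposition of $D\beta$ into symmetric and antisymmetric parts. Write $\int|d\beta|^2=\int|D\beta|^2+(k-2)\int|\beta|^2$. Then
\[
\int|d\beta|^2\ge\tfrac12\int|d\beta|^2+(k-2)\int|\beta|^2,
\qquad\text{so}\qquad
\int|d\beta|^2\ge2(k-2)\int|\beta|^2.
\]
Since $2(k-2)\ge k-1$ exactly when $k\ge3$, this gives the claim, with equality at $k=3$ (Killing fields on $S^2$). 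Alternatively, I can quote the known spectrum of coclosed one-forms on $S^n$, namely $(\ell+1)(\ell+n-2)$ for $\ell\ge1$, whose minimum $2(n-1)$ is at least $n$ for $n\ge2$; with $n=k-1$ this is again $\ge k-1$.

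Adding the two orthogonal pieces gives \eqref{eq:oneform-gap}. The only delicate step is the coclosed estimate, where the Ricci term alone falls short and the Kato refinement (or the explicit spectrum) is needed. The hypothesis $k\ge3$ is used exactly there.
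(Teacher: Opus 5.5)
Your proof is correct, but it is organized differently from the paper's. You split $F^\flat=df+\beta$ by the Hodge decomposition and bound the two pieces separately:
\begin{itemize}
\item on the exact part you use the scalar spectral gap, $\int(\Delta_S f)^2\ge(k-1)\int|\nabla_S f|^2$;
\item on the coclosed part you use Bochner together with $|D\beta|^2\ge\frac12|d\beta|^2$.
\end{itemize}
This gives the separate constants $k-1$ and $2(k-2)$.

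The paper never decomposes the form. It writes $\nabla\eta=S+A$ and keeps the trace of the symmetric part via $(\operatorname{tr}S)^2\le(k-1)|S|^2$. This gives the single pointwise inequality $|\nabla\eta|^2\ge\frac1{k-1}|\delta\eta|^2+\frac12|d\eta|^2$. Inserted into the integrated Bochner identity, it yields the weighted estimate \eqref{eq:strong-oneform} for all one-forms at once.

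The two proofs carry the same information:
\begin{itemize}
\item restricted to exact forms, the paper's computation is Lichnerowicz's proof of $\lambda_1\ge k-1$;
\item restricted to coclosed forms, it is exactly your bound $2(k-2)$;
\item conversely, your two bounds imply \eqref{eq:strong-oneform}, since $\frac{k-1}{2(k-2)}\cdot 2(k-2)=k-1$.
\end{itemize}

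What each route buys: for the one-form inequality, the paper's argument needs neither the Hodge decomposition nor any knowledge of the spherical-harmonic spectrum. Your route makes the role of $k\ge3$ transparent, since it is exactly the comparison $2(k-2)\ge k-1$. It also exhibits the extremals: gradients of linear functions such as $\nabla_S\omega_1$, which is the equality case behind Remark~\ref{rem:ck-optimal}, and Killing fields when $k=3$.

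A minor point: the inequality $|D\beta|^2\ge\frac12|d\beta|^2$ holds for every one-form, not only coclosed ones. Coclosedness is used only to remove the $|\delta\beta|^2$ term from the Bochner identity.
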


\begin{proof}
The scalar estimate \eqref{eq:scalar-gap} is the first nonzero eigenvalue identity $\lambda_1(S^{k-1})=k-1$. Let $\eta=F^\flat$.  By the Bochner--Weitzenb\"ock formula
for one-forms \cite[Chapter~3]{LiGA},
\begin{equation}\label{eq:bochner}
 \int_{S^{k-1}}\bigl(|d\eta|^2+|\delta\eta|^2\bigr)\dd\omega
 =\int_{S^{k-1}}\bigl(|\nabla\eta|^2+(k-2)|\eta|^2\bigr)\dd\omega.
\end{equation}
Decompose $\nabla\eta=S+A$ into its symmetric and antisymmetric parts.  Our normalization gives $|A|^2=\tfrac12|d\eta|^2$, while $(\operatorname{tr}S)^2\le(k-1)|S|^2$.  Since $\delta\eta=-\dv_SF$, it follows that
\[
 |\nabla\eta|^2\ge
 \frac1{k-1}|\delta\eta|^2+\frac12|d\eta|^2.
\]
Substitution in \eqref{eq:bochner}, followed by rearrangement, gives
\begin{equation}\label{eq:strong-oneform}
 \int_{S^{k-1}}\left(
 |\delta\eta|^2+\frac{k-1}{2(k-2)}|d\eta|^2
 \right)\dd\omega
 \ge(k-1)\int_{S^{k-1}}|\eta|^2\dd\omega.
\end{equation}
For $k\ge3$, $(k-1)/(2(k-2))\le1$, so \eqref{eq:strong-oneform} implies \eqref{eq:oneform-gap}. 
\end{proof}

\begin{proof}[Proof of Proposition~\ref{prop:sharp-hardy}]
For $r>0$, decompose
\begin{equation*}
 f(r\omega)=a(r,\omega)\omega+F(r,\omega),
 \qquad F(r,\omega)\in T_\omega S^{k-1}.
\end{equation*}
Then $x\cdot f=ra$ and $|f|^2=a^2+|F|^2$. The functions $a=\omega\cdot f(r\omega)$ and $F=f(r\omega)-a\omega$ are smooth on $[0,\infty)\times S^{k-1}$ and vanish for sufficiently large $r$. Hence all radial boundary terms vanish: at infinity by compact support and at the origin because they contain a factor $r^k$.

A calculation in the orthonormal frame $\{\pr_r,r^{-1}e_1,\ldots,r^{-1}e_{k-1}\}$ gives
\begin{align*}
 |\nabla f|^2
 &=(\pr_ra)^2+|\pr_rF|^2
 +\frac1{r^2}\left(
 |\nabla_Sa-F|^2+|DF|^2
 +2a\dv_SF+(k-1)a^2
 \right),\\
 |\nabla(x\cdot f)|^2
 &=\bigl(\pr_r(ra)\bigr)^2+|\nabla_Sa|^2.
\end{align*}
Using integration by parts on the sphere and the Bochner--Weitzenb\"ock formula,
\[
 \int_{S^{k-1}}|DF|^2\dd\omega
 =\int_{S^{k-1}}\left((\dv_SF)^2+|dF^\flat|^2-(k-2)|F|^2\right)\dd\omega,
\]
$\mathcal Q[f]$ in \eqref{eq:hardy-sharp} equals
\begin{align}
\mathcal Q[f]
&=\int_0^\infty\!\!\int_{S^{k-1}}
 \Bigl\{
 hr^{k-1}\bigl((\pr_ra)^2+(\pr_r(ra))^2+|\pr_rF|^2\bigr)
 \notag\\
&\quad+h^3r^{k-3}|\nabla_Sa|^2
 +\Bigl((k-1)hr^{k-3}-c_k\frac{r^{k-1}}h\Bigr)a^2
 \notag\\
&\quad+\Bigl((3-k)hr^{k-3}-c_k\frac{r^{k-1}}h\Bigr)|F|^2
 +4hr^{k-3}a\dv_SF
 \notag\\
&\quad+hr^{k-3}\bigl((\dv_SF)^2+|dF^\flat|^2\bigr)
 \Bigr\}\dd\omega\dd r.
\label{eq:polar-energy}
\end{align}
Expanding $\pr_r(ra)=a+r\pr_ra$ and integrating its mixed term by parts gives
\begin{align}
&\int_0^\infty hr^{k-1}\bigl((\pr_ra)^2+(\pr_r(ra))^2\bigr)\dd r
\notag\\
&\qquad=\int_0^\infty
 \Bigl[h^3r^{k-1}(\pr_ra)^2
 -\Bigl((k-1)hr^{k-1}+\frac{r^{k+1}}h\Bigr)a^2\Bigr]\dd r.
\label{eq:remove-ra}
\end{align}
The remaining radial derivatives admit the exact square completions
\begin{align}
 \int_0^\infty h^3r^{k-1}(\pr_ra)^2\dd r
 &=\int_0^\infty\Biggl[
 h^3r^{k-1}\Bigl(\pr_ra+\frac{(k+1)r}{2h^2}a\Bigr)^2
 \notag\\[-2mm]
 &\qquad+\Bigl(
 \frac{k+1}{2}(hr^k)'
 -\frac{(k+1)^2}{4}\frac{r^{k+1}}h
 \Bigr)a^2\Biggr]\dd r,
 \label{eq:square-a}\\
 \int_0^\infty hr^{k-1}|\pr_rF|^2\dd r
 &=\int_0^\infty\Biggl[
 hr^{k-1}\Bigl|\pr_rF+\frac{(k-1)r}{2h^2}F\Bigr|^2
 \notag\\[-2mm]
 &\qquad+\Bigl(
 \frac{k-1}{2}\Bigl(\frac{r^k}{h}\Bigr)'
 -\frac{(k-1)^2}{4}\frac{r^{k+1}}{h^3}
 \Bigr)|F|^2\Biggr]\dd r.
 \label{eq:square-F}
\end{align}
Substituting \eqref{eq:remove-ra} into \eqref{eq:polar-energy}, then applying \eqref{eq:square-a} and \eqref{eq:square-F}, discarding the two nonnegative squares, and using $r^2=h^2-1$, we obtain
\begin{align}
 \mathcal Q[f]\ge
 \int_0^\infty\!\!\int_{S^{k-1}}r^{k-3}\Bigl\{
 &h^3|\nabla_Sa|^2+\alpha a^2+4ha\dv_SF
 \notag\\
 &+h\bigl((\dv_SF)^2+|dF^\flat|^2\bigr)-C|F|^2
 \Bigr\}\dd\omega\dd r,
 \label{eq:remainder}
\end{align}
where
\begin{align}
 \alpha=\alpha(r)
 &=\frac{4(k-1)+(k^2+4k-9)r^2+(k-1)^2r^4}{4h},
 \label{eq:alpha}\\
 C=C(r)
 &=(k-1)h-\frac{k^2+7}{4h}+\frac{k^2-1}{4h^3}.
 \label{eq:C}
\end{align}
For $k\ge3$, all coefficients in the numerator of \eqref{eq:alpha} are positive, so $\alpha>0$.

For each fixed $r$, write $a=\bar a+\mathring a$, where $\bar a$ is the spherical mean.  Since $\int_{S^{k-1}}\dv_SF\dd\omega=0$, only $\mathring a$ occurs in the mixed term.  Since $|\nabla_Sa|=|\nabla_S\mathring a|$, Lemma~\ref{lem:sphere} gives
\begin{equation}\label{eq:a-gap}
 \int_{S^{k-1}}h^3|\nabla_Sa|^2\dd\omega
 \ge(k-1)h^3\int_{S^{k-1}}\mathring a^2\dd\omega.
\end{equation}

For the term $\int_{S^{k-1}}\left(h\bigl((\dv_SF)^2+|dF^\flat|^2\bigr)-C|F|^2 \right)d\omega$ on the right-hand side of \eqref{eq:remainder}, direct calculation gives
\begin{equation}\label{eq:sign-C}
 4h^3C=4(k-1)r^4-(k-3)(k-5)r^2+4(k-3).
\end{equation}
The minimum of the right-hand side of \eqref{eq:sign-C} over $r^2\ge0$ is
\[
 \begin{cases}
 4(k-3),&3\le k\le5,\\[1mm]
 \dfrac{k-3}{16(k-1)}
 \left[64(k-1)-(k-3)(k-5)^2\right],&k\ge6.
 \end{cases}
\]
Moreover, $(k-3)(k-5)^2-64(k-1)=k^3-13k^2-9k-11$, whose derivative is $(3k+1)(k-9)$. Direct evaluation gives a negative value for $6\le k\le13$, with the value $-128$ at $k=13$, and the value $59$ at $k=14$. Therefore
\begin{equation}\label{eq:C-nonneg}
 C(r)\ge0\quad\text{for every $r$ when }3\le k\le13,
 \text{ and }
 \{r:C(r)<0\}\ne\varnothing\quad\text{only when }k\ge14.
\end{equation}

If $C(r)\ge0$, multiplying \eqref{eq:oneform-gap} by $\frac{C}{k-1}\ge0$ and adding $h\int_{S^{k-1}}\bigl((\dv_SF)^2+|dF^\flat|^2\bigr)\dd\omega$ gives
\begin{align*}
&\int_{S^{k-1}}\Bigl[
 h\bigl((\dv_SF)^2+|dF^\flat|^2\bigr)-C|F|^2
 \Bigr]\dd\omega\\
&\qquad\ge
 \Bigl(h-\frac{C}{k-1}\Bigr)
 \int_{S^{k-1}}\bigl((\dv_SF)^2+|dF^\flat|^2\bigr)\dd\omega\\
 &\qquad=\frac{8+(k^2+7)r^2}{4(k-1)h^3}
 \int_{S^{k-1}}\bigl((\dv_SF)^2+|dF^\flat|^2\bigr)\dd\omega,
\end{align*}
the last equality by substituting $h^2=1+r^2$ in $h-\frac{C}{k-1}$. If $C(r)<0$, the term $-C|F|^2$ is nonnegative and may be discarded.  Hence, at every $r$,
\begin{align}
&\int_{S^{k-1}}\Bigl[
 h\bigl((\dv_SF)^2+|dF^\flat|^2\bigr)-C|F|^2
 \Bigr]\dd\omega
 \notag\\
&\qquad\ge\gamma
 \int_{S^{k-1}}\bigl((\dv_SF)^2+|dF^\flat|^2\bigr)\dd\omega,
 \label{eq:F-absorb}
\end{align}
where
\begin{equation}\label{eq:gamma}
 \gamma=
 \begin{cases}
 \dfrac{8+(k^2+7)r^2}{4(k-1)h^3},&C\ge0,\\[2mm]
 h,&C<0,
 \end{cases}
\end{equation}
and $\gamma>0$ in both cases.

Set $\beta=(k-1)h^3+\alpha$. Combining \eqref{eq:remainder}, \eqref{eq:a-gap}, and \eqref{eq:F-absorb}, we find
\begin{align}
 \mathcal Q[f]\ge\int_0^\infty r^{k-3}\Biggl\{
 &\alpha\int_{S^{k-1}}\bar a^2\dd\omega
 \notag\\
 &+\int_{S^{k-1}}\bigl(
 \beta\mathring a^2+\gamma(\dv_SF)^2
 +4h\mathring a\dv_SF+\gamma|dF^\flat|^2
 \bigr)\dd\omega\Biggr\}\dd r.
\label{eq:final-lower}
\end{align}
The only remaining mixed term is handled by
\begin{equation}\label{eq:last-square}
 \beta\mathring a^2+\gamma(\dv_SF)^2+4h\mathring a\dv_SF
 =\gamma\Bigl(\dv_SF+\frac{2h}{\gamma}\mathring a\Bigr)^2
 +\frac{\beta\gamma-4h^2}{\gamma}\mathring a^2.
\end{equation}
If $C\ge0$, substituting the expression in \eqref{eq:gamma} for $C\geq 0$ into \eqref{eq:last-square} gives
\begin{align*}
 \beta\gamma-4h^2
 =\frac{r^2}{16(k-1)h^4}\Bigl[&8k(k^2-5)
 +(k^4+12k^3-2k^2-92k+49)r^2
 \notag\\
 &+(k-1)(k^3+3k^2+7k-43)r^4\Bigr].
\end{align*}
All three coefficients in the bracket are positive for every integer $k\ge3$: at $k=3$ they equal $96$, $160$, and $64$, and each is increasing in $k$. Thus $\beta\gamma-4h^2\ge0$ whenever $C\ge0$. If $C<0$, then $k\ge14$ by \eqref{eq:C-nonneg}, and, using $\alpha>0$ and $h\ge1$,
\[
 \beta\gamma=\beta h\ge(k-1)h^4\ge13h^4\ge4h^2.
\]
Thus $\beta\gamma-4h^2\ge0$ in both cases, and every term in \eqref{eq:final-lower} is nonnegative. This proves \eqref{eq:hardy-sharp}.
\end{proof}

\begin{remark}\label{rem:ck-optimal}
The constant $c_k$ in \eqref{eq:hardy-sharp} can be shown to be optimal.  A key ingredient is that $F_1=\nabla_S\omega_1=e_1-\langle\omega,e_1\rangle\,\omega$ attains equality in \eqref{eq:oneform-gap}. 
\end{remark}

To apply Proposition~\ref{prop:sharp-hardy} to $q_{\bla}$ in \eqref{eq:L^2-functional}, observe that
\begin{equation}\label{eq:ck-minus-3}
 c_k-3=\frac{(k-3)(k+1)}4\ge0
 \qquad\text{for }k\ge3.
\end{equation}

\begin{theorem}\label{thm:stable-half}
$\Hk$ is stable for every $k\ge3$.
\end{theorem}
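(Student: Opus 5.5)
By Proposition~\ref{prop:stability-reduction}, stability of $\Hk=\mathcal H_{\bla}$ with $\bla=(1,\ldots,1)$ is equivalent to $q_{\bla}[f]\ge0$ for every $f\in C_c^\infty(\R^k;\R^k)$. So the plan is to verify this inequality by comparing it directly with the sharp inequality of Proposition~\ref{prop:sharp-hardy}.

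When all $\lambda_j=1$, we have $\Lambda=I$, $\Lambda u\cdot f=u\cdot f$, $|\Lambda f|^2=|f|^2$, and $h=\sqrt{1+|u|^2}$. Hence, writing $x=u$,
\[
 q_{\bla}[f]=\int_{\R^k}h\left(\sum_{j=1}^k|\nabla f^j|^2+|\nabla(x\cdot f)|^2-\frac{3|f|^2}{h^2}\right).
\]
Comparing with \eqref{eq:hardy-sharp}, I will write
\[
 q_{\bla}[f]=\mathcal Q[f]+(c_k-3)\int_{\R^k}\frac{|f|^2}{h}.
\]

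Proposition~\ref{prop:sharp-hardy} gives $\mathcal Q[f]\ge0$ for $k\ge3$. By \eqref{eq:ck-minus-3}, $c_k-3=(k-3)(k+1)/4\ge0$, so the second term is nonnegative as well. Therefore $q_{\bla}[f]\ge0$ for all compactly supported $f$, and stability follows from Proposition~\ref{prop:stability-reduction}.

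There is no real obstacle at this stage, since all the analytic work is contained in Proposition~\ref{prop:sharp-hardy}. The only point to check is that the specialization $\lambda_j=1$ of \eqref{eq:L^2-functional} matches \eqref{eq:hardy-sharp} term by term, including the weight $h$ and the factor $h^{-2}$ on the zeroth-order term. This is immediate from the definitions.
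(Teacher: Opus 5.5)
Your proposal is correct and is essentially the paper's proof. Specializing to $\Lambda=I$, you write $q_{\bla}[f]=\mathcal Q[f]+(c_k-3)\int_{\R^k}|f|^2/h$, then invoke Proposition~\ref{prop:sharp-hardy} together with $c_k-3=(k-3)(k+1)/4\ge0$, and conclude via Proposition~\ref{prop:stability-reduction}.
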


\begin{proof}
Take $\bla=(1,\ldots,1)$ in \eqref{eq:L^2-functional}.  Then $\Lambda$
is the identity and $|\Lambda f|^2=|f|^2$ for any $f\in C_c^\infty(\R^k,\R^k)$, so
\[
 q_{\bla}[f]=\int_{\R^k}h\left(
 \sum_{j=1}^k|\nabla f^j|^2+|\nabla(u\cdot f)|^2
 -\frac{3|f|^2}{h^2}
 \right)du .
\]
Proposition~\ref{prop:sharp-hardy} and \eqref{eq:ck-minus-3} give $q_{\bla}[f]\ge (c_k-3)\int_{\R^k}\frac{|f|^2}{h}\,du\ge0$. Stability now follows from Proposition~\ref{prop:stability-reduction}.
\end{proof}

 This proves the stable half of Theorem \ref{thm:stability}. The instability of $H_2$ is proved in Section~\ref{sec:unstable}; the instability of the classical helicoid $H_1$ follows from \cite{dCP,FCS}.

\section{Calibration of \texorpdfstring{$H_k$}{Hk} for even \texorpdfstring{$k\geq6$}{k>=6}}\label{sec:calibration-Hk-even}

Throughout this section, $k\ge6$ is even. Let
\[
 H_k=\left\{(\cos s\,u,\sin s\,u,s):u\in\R^k,\ s\in\R\right\}\subset\R^{2k+1}
\] 
be the $(k+1)$-dimensional helicoid. The projection of $H_k$ onto $\R^{2k}$ is precisely $C(2, k, 1)$, the cone of $2\times k$ matrices of rank at most $1$. Kerckhove and Lawlor \cite{KerckhoveLawlor} studied the area-minimizing property of these minimal cones in $\R^{2k}$ and showed that $C(2, k, 1)$ is area-minimizing if $k\geq 4$; see also \cite{LawlorCriterion}. The construction of our calibration is inspired by their work.

\subsection{Coordinates adapted to the helicoid and the induced geometry}\label{sec:calibration-coordinates}
Write
$
\mathbb R^{2k}=\mathbb R^k\times\mathbb R^k,
$
and denote a point by \((x,y)\), where \(x,y\in\mathbb R^k\).

Consider the subset $Z_0\subset \mathbb R^{2k}$:
\begin{equation}\label{eq:Z_0}
Z_0 = \{ (x,y) \in \R^{2k} : |x|=|y|, \, \langle x,y\rangle = 0 \} ,
\end{equation} 
and define $\sigma_1\geq 0$, $\sigma_2\geq 0$ by
\[
\sigma_1^2=
\frac{|x|^2+|y|^2}{2}
+
\frac12
\sqrt{\bigl(|x|^2-|y|^2\bigr)^2+4\langle x, y\rangle^2}
\]
and
\[
\sigma_2^2=
\frac{|x|^2+|y|^2}{2}
-\frac12
\sqrt{\bigl(|x|^2-|y|^2\bigr)^2+4\langle x, y\rangle^2}.
\]

Then $\sigma_1$ and $\sigma_2$ are the singular values of the $2\times k$ matrix $\begin{pmatrix}
x\\y
\end{pmatrix}=\begin{pmatrix}
x_1 & x_2 & \cdots & x_k\\
y_1 & y_2 & \cdots & y_k
\end{pmatrix}$. Moreover, $Z_0=\{(x, y)\in \mathbb R^{2k}:\sigma_1=\sigma_2\}$ and therefore $\sigma_1>\sigma_2$ on 
$\R^{2k}\setminus Z_0$.

For any $(x, y) \in \R^{2k}\setminus Z_0$, by singular value decomposition there exist $s\in \mathbb R$, and orthonormal vectors $w, v\in \mathbb R^k$ such that 
\begin{equation}\label{eq:SVD}
 \begin{pmatrix}
x\\y
\end{pmatrix}
 =\begin{pmatrix}\cos s&-\sin s\\ \sin s&\cos s\end{pmatrix}
 \begin{pmatrix}\sigma_1 &0\\ 0&\sigma_2\end{pmatrix}
 \begin{pmatrix}
w\\v
\end{pmatrix}.
\end{equation}
In particular, $w$ is associated with the larger singular value $\sigma_1$.
 
We also introduce $\rho>0$ and $0\leq t<\frac{\pi}{4}$ such that 
\begin{equation}\label{eq:adapted-singular-values}
 \sigma_1=\rho\cos t,\qquad \sigma_2=\rho\sin t
\end{equation} 
in $\R^{2k}\setminus Z_0$. Together, $(\rho,t,s,w,v)$ form a coordinate system on $\R^{2k}\setminus Z_0$ with
\[
\rho>0,\qquad 0\leq t<\frac{\pi}{4},\qquad s\in\mathbb R,\qquad w, v\in \mathbb R^k,\qquad |w|=|v|=1, \langle w, v\rangle=0.
\] 

Since we choose
$
\begin{pmatrix}
\cos s & -\sin s\\
\sin s & \cos s
\end{pmatrix}\in SO(2),
$
the only remaining ambiguity, apart from the standard periodicity \(s\sim s+2\pi\), is
$
(s,w,v)\sim (s+\pi,-w,-v).$

For $(x,y)\in\R^{2k}\setminus Z_0$, \eqref{eq:adapted-singular-values} gives
\begin{equation}\label{eq:rho-t-in-xy}
 \rho=\sqrt{|x|^2+|y|^2},
 \qquad
 \sin(2t)=\frac{2\sqrt{|x|^2|y|^2-\langle x, y\rangle^2}}
 {|x|^2+|y|^2}.
\end{equation}
Adding $z\in\R$ gives adapted coordinates $(\rho,t,s,w,v,z)$ on $\R^{2k+1}\setminus Z$, where $Z=Z_0 \times \mathbb{R}$, in which
\begin{equation}\label{eq:helicoid-adapted-coordinates}
 H_k=\{t=0,\ z=s\}.
\end{equation}

Write $d\tau=\langle dw,v\rangle$. Differentiating \eqref{eq:SVD} and computing the Euclidean metric on $\R^{2k}\setminus Z_0$, we obtain
\begin{align*}
 \left|dx\right|^2+\left|dy\right|^2&=\left|\cos t\,w\,d\rho-\rho\sin t\,w\,dt
 -\rho\sin t\,v\,ds+\rho\cos t\,dw\right|^2\\
 &\quad+\left|\sin t\,v\,d\rho+\rho\cos t\,v\,dt
 +\rho\cos t\,w\,ds+\rho\sin t\,dv\right|^2.
\end{align*}
The relations $|w|=|v|=1$ and $\langle w, v\rangle=0$ give
\[
 dw=v\,d\tau+(dw-v\,d\tau),
 \qquad
 dv=-w\,d\tau+(dv+w\,d\tau).
\]
Hence the Euclidean metric on $\R^{2k+1}\setminus Z$ is
\begin{equation}\label{eq:adapted-metric}
\begin{split}
 g={}&d\rho^2+\rho^2dt^2
 +\rho^2\left(ds^2-2\sin(2t)\,ds\,d\tau+d\tau^2\right)\\
 &+\rho^2\cos^2t\,|dw-v\,d\tau|^2
 +\rho^2\sin^2t\,|dv+w\,d\tau|^2+dz^2.
\end{split}
\end{equation}
Restricting \eqref{eq:adapted-metric} to $H_k$ as described in \eqref{eq:helicoid-adapted-coordinates} gives
\begin{equation}\label{eq:helicoid-polar-metric}
 g_{H_k}=d\rho^2+(1+\rho^2)ds^2+\rho^2|dw|^2.
\end{equation}

Let $\Theta$ denote the standard volume form of the metric $|dw|^2$ on $S^{k-1}$,
\begin{equation}\label{eq:Theta}
\Theta
=
\sum_{i=1}^k
(-1)^{i-1} w_i\,
dw_1\wedge\cdots\wedge \widehat{dw_i}\wedge\cdots\wedge dw_k .
\end{equation}
When $k$ is even, $\Theta$ is invariant under $(s,w,v)\sim(s+\pi,-w,-v)$ and hence descends to $\mathbb{R}^{2k+1}\setminus Z$.

By \eqref{eq:helicoid-polar-metric}, we orient the $(k+1)$-dimensional helicoid $H_k$ by
\begin{equation}\label{eq:helicoid-polar-volume}
 d\mu_{H_k}=\rho^{k-1}\sqrt{1+\rho^2}\,
 \Theta\wedge d\rho\wedge ds.
\end{equation}

Choose a local orthonormal frame $e_1,\ldots,e_{k-2}$ of $\{w,v\}^{\perp}$ such that $(v,e_1,\ldots,e_{k-2})$ is positively oriented in $T_wS^{k-1}$, and set
\[
 \theta^a=\langle dw-v\,d\tau,e_a\rangle=\langle dw,e_a\rangle,
 \qquad 1\leq a\leq k-2.
\]
Then
\begin{equation}\label{eq:angular-volume-form}
 \Theta=d\tau\wedge\theta^1\wedge\cdots\wedge\theta^{k-2}
\end{equation} and $|dw-v\, d\tau|^2=\sum_{a=1}^{k-2} (\theta^a)^2$.

In the rest of this subsection, we consider $\Theta$ and $\Theta \wedge ds$ as a $(k-1)$-form and a $k$-form on $\mathbb{R}^{2k+1}\setminus Z$ and compute their comasses with respect to $g$ in \eqref{eq:adapted-metric}.

The relevant part of the inverse of \eqref{eq:adapted-metric} is
\[
 \langle ds,ds\rangle=\langle d\tau,d\tau\rangle
 =\frac{1}{\rho^2\cos^2(2t)},
 \qquad
 \langle ds,d\tau\rangle
 =\frac{\sin(2t)}{\rho^2\cos^2(2t)}.
\]
Moreover, $\theta^1,\ldots,\theta^{k-2}$ are mutually orthogonal and orthogonal to $ds$ and $d\tau$, with
\[
 \|\theta^a\|^2=\frac{1}{\rho^2\cos^2t}.
\]
By \eqref{eq:angular-volume-form}, $\Theta$ is decomposable, and its comass equals its pointwise norm.
Therefore,
\[
 \|\Theta\|_*^2
 =\frac{1}{\rho^2\cos^2(2t)}
 \left(\frac{1}{\rho^2\cos^2t}\right)^{k-2}
 =\frac{1}{\rho^{2k-2}J(t)^2}
\]
where
\begin{equation}\label{eq:J-definition}
 J(t)=(\cos t)^{k-2}\cos(2t).
\end{equation}

Similarly,
\[
 \|d\tau\wedge ds\|^2
 =\det\begin{pmatrix}
 \langle d\tau,d\tau\rangle&\langle d\tau,ds\rangle\\
 \langle ds,d\tau\rangle&\langle ds,ds\rangle
 \end{pmatrix}
 =\frac{1}{\rho^4\cos^2(2t)}.
\]
Consequently, using \eqref{eq:J-definition},
\begin{equation}\label{eq:angular-comass}
 \|\Theta\|_*=\frac{1}{\rho^{k-1}J(t)},
 \qquad
 \|\Theta\wedge ds\|_*=\frac{1}{\rho^kJ(t)}.
\end{equation} 

\subsection{The calibration}\label{sec:calibration-form}

We construct the calibration $\xi$ for $H_k$ on $\mathbb{R}^{2k+1}\setminus Z$ in this subsection.

\begin{proposition}\label{prop:calibration}
Let
\[
\varphi(t)=\cos(2t), \qquad 0\leq t<\frac{\pi}{4},
\]
and set \(r=\rho\varphi(t)\). On \(\mathbb R^{2k+1}\setminus Z\), with coordinates $
(\rho,t,s,w,v,z)$
define
\begin{equation}\label{eq:xi-decomposable}
\xi
=
\Theta\wedge dr\wedge
\left(
\frac{r^{k+1}}{\sqrt{1+r^2}}\,ds
+
\frac{r^{k-1}}{\sqrt{1+r^2}}\,dz
\right).
\end{equation}
Then \(\xi\) has the following properties on \(\mathbb R^{2k+1}\setminus Z\):
\begin{enumerate}
\item Its comass satisfies
\begin{equation}\label{eq:comass-bound}
\|\xi\|_*\leq 1.
\end{equation}

\item The form \(\xi\) satisfies
\begin{equation}\label{eq:calibration-regularity}
\xi\in C^\infty(\mathbb R^{2k+1}\setminus Z)
\cap L^\infty_{\mathrm{loc}}(\mathbb R^{2k+1}),
\qquad
d\xi=0
\quad\text{on }\mathbb R^{2k+1}\setminus Z.
\end{equation}

\item The restriction of \(\xi\) to \(H_k\setminus Z\) is the induced volume form:
\begin{equation}\label{eq:calibration-restriction}
\left.\xi\right|_{H_k\setminus Z}
=
d\mu_{H_k}.
\end{equation}
\end{enumerate}

\end{proposition}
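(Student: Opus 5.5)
The plan is to verify the three properties in order of difficulty: closedness and regularity, then the restriction identity, then the comass bound. The comass bound reduces to a one-variable trigonometric inequality, which I expect to be the main obstacle.

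\emph{Regularity and closedness.} On $\R^{2k+1}\setminus Z$ we have $\sigma_1>\sigma_2$. So the top singular direction $w$ (up to sign) and the angle $s$ (up to $\pi$) depend smoothly on $(x,y)$; this includes the locus $\sigma_2=0$, where $v$ is ambiguous but $w$ and $s$ are not.

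The quantity $r=\rho\cos 2t=\sigma_1^2-\sigma_2^2$ over $\rho$ equals $\sqrt{(|x|^2-|y|^2)^2+4\langle x,y\rangle^2}/\rho$, which is smooth away from $Z$. Since $\xi$ involves only $\Theta$, $ds$, $dr$ and $dz$, and $k$ is even, $\xi$ is invariant under $(s,w,v)\sim(s+\pi,-w,-v)$. It is therefore a well-defined smooth form on $\R^{2k+1}\setminus Z$.

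Local boundedness will follow from the comass bound $\|\xi\|_*\le1$. For $d\xi=0$:
\begin{itemize}
\item $\Theta$ is the pullback under $w$ of the volume form of $S^{k-1}$, a top-degree form, so $d\Theta=0$.
\item The remaining factor $dr\wedge(A(r)\,ds+B(r)\,dz)$ is closed, because $d(A(r)\,ds)=A'(r)\,dr\wedge ds$ and $d(B(r)\,dz)=B'(r)\,dr\wedge dz$ are both killed by the wedge with $dr$.
\end{itemize}

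\emph{Restriction.} On $H_k=\{t=0,\ z=s\}$ we have $r=\rho$ and $dz=ds$. Hence
\[
\xi|_{H_k}=\Theta\wedge d\rho\wedge\frac{\rho^{k+1}+\rho^{k-1}}{\sqrt{1+\rho^2}}\,ds=\rho^{k-1}\sqrt{1+\rho^2}\,\Theta\wedge d\rho\wedge ds,
\]
which equals $d\mu_{H_k}$ by \eqref{eq:helicoid-polar-volume}.

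\emph{Comass.} The first step is to make $\xi$ decomposable with mutually orthogonal factors.
\begin{itemize}
\item $dr=\varphi\,d\rho+\rho\varphi'\,dt$ is orthogonal to $\Theta$, $ds$ and $dz$ in the metric \eqref{eq:adapted-metric}, and $|dr|^2=\varphi^2+\varphi'^2=\cos^2 2t+4\sin^2 2t$.
\item Write $ds=ds'+c\,d\tau$ with $ds'\perp d\tau,\theta^a$. Then $\Theta\wedge ds=\Theta\wedge ds'$, and comparing with \eqref{eq:angular-comass} gives $|ds'|=1/\rho$.
\item $dz$ is orthogonal to everything else and has unit norm.
\end{itemize}
Thus $\xi=\pm\,dr\wedge\Theta\wedge(\alpha\,ds'+\beta\,dz)$ with $\alpha=r^{k+1}/\sqrt{1+r^2}$ and $\beta=r^{k-1}/\sqrt{1+r^2}$. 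This is a decomposable form whose factors are pairwise orthogonal, so its comass equals its norm:
\[
\|\xi\|_*=|dr|\cdot\frac{1}{\rho^{k-1}J(t)}\cdot\frac{r^{k-1}\sqrt{1+\varphi^2r^2}}{\sqrt{1+r^2}}
\le\frac{\sqrt{\cos^2 2t+4\sin^2 2t}\,(\cos 2t)^{k-2}}{(\cos t)^{k-2}}.
\]
The middle computation uses $r^{2k+2}/\rho^2+r^{2k-2}=r^{2k-2}(1+\varphi^2r^2)$, and the inequality uses $\varphi\le1$.

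It therefore remains to prove the trigonometric inequality. Set $c=\cos 2t\in(0,1]$, so that $\cos^2 t=(1+c)/2$. The required bound becomes
\[
(4-3c^2)\,c^{2k-4}\le\Bigl(\frac{1+c}{2}\Bigr)^{k-2},\qquad 0<c\le1,
\]
which is the content of Lemma~\ref{lem:trig-inequality}. Equality holds at $c=1$.

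I would attack this by taking logarithms and studying $g(c)=\log(4-3c^2)+(k-2)\log\frac{4c^2}{1+c}$. I would show $g\le0$ in two regimes:
\begin{itemize}
\item near $c=1$, via the sign of $g'$, which should be positive for $k\ge6$;
\item for small $c$, where $c^{2k-4}$ dominates.
\end{itemize}
The delicate part is the intermediate range, where the factor $4-3c^2$ can exceed $1$ and a crude bound no longer suffices. This is exactly where the argument fails for $k=4$, so the proof must genuinely use $k\ge6$.
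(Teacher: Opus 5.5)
Your argument for the proposition is the paper's. The restriction computation is identical. Your splitting $ds=ds'+c\,d\tau$ with $|ds'|=1/\rho$ is a repackaging of $\|\Theta\wedge ds\|_*=1/(\rho^kJ)$ and gives exactly the paper's formula for $\|\xi\|_*^2$. The comass bound is then reduced to the same inequality, which the paper also states and proves separately as Lemma~\ref{lem:trig-inequality}; citing that lemma completes your proof.

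For closedness the paper does something slightly different: it exhibits an explicit primitive $\Psi$, built from antiderivatives of the two coefficients, and checks $\Psi\in L^\infty_{\mathrm{loc}}$. Your direct check that $d\Theta=0$ and that $dr\wedge(\cdots)$ is closed suffices for the statement. The bounded primitive is nevertheless what the paper uses afterwards to get $d\xi=0$ distributionally across $Z$.

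The part you leave as a plan, proving the inequality yourself, is wrong in its details. The logarithm of the ratio is
\[
g(c)=\log(4-3c^2)+(k-2)\log\frac{2c^2}{1+c},
\]
not $(k-2)\log\frac{4c^2}{1+c}$. With your version $g(1)=(k-2)\log 2>0$, so the target $g\le 0$ would be false.

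With the correct $g$ you do not need to split into regimes. For $k\ge6$ and $0<c\le1$,
\[
c(1+c)(4-3c^2)\,g'(c)=(k-2)(2+c)(4-3c^2)-6c^2(1+c)\ge 2(1-c)(4+3c)^2\ge 0 .
\]
Hence $g$ is nondecreasing and $g\le g(1)=0$. There is no delicate intermediate range.

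The failure for $k=4$ is not in an intermediate range either. Since $g'(1)=\tfrac32(k-2)-6$, you get $g'(1)=-3<0$ at $k=4$, so the inequality breaks down as $c\to1$, that is, near $H_k$ itself. At $k=6$, $g'(1)=0$, so $g'$ is positive only on $(0,1)$, not at $c=1$.

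The paper runs the same global monotonicity argument in the variable $\mathsf t=\tan^2 t$. There the logarithmic derivative splits as $-(k-6)\frac{3+\mathsf t}{1-\mathsf t^2}$ plus a manifestly nonpositive term.
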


\begin{proof}

We first estimate the comass of $\xi$. Differentiating $r=\rho \varphi(t) $ gives
\[
 dr=\varphi\,d\rho+\rho\varphi'\,dt.
\]
By \eqref{eq:adapted-metric}, $d\rho\perp dt$, $|d\rho|=1$, and $|dt|=\rho^{-1}$; moreover, $dr$ is orthogonal to all the one-form factors in $\Theta$, $ds$, and $dz$. Hence
\begin{equation*}
 |dr|^2=\varphi^2+(\varphi')^2.
\end{equation*}
The form $\xi$ is decomposable, so its comass equals its pointwise norm. Since $dz$ is a unit one-form orthogonal to $ds$ and to every factor in $\Theta$, \eqref{eq:angular-comass} gives
\begin{align*}
 \|\xi\|_*^2
 &=\left(\varphi^2+(\varphi')^2\right)
 \left(\frac{r^{2k+2}}
 {1+r^2}\|\Theta\wedge ds\|_*^2
 +\frac{r^{2k-2}}
 {1+r^2}\|\Theta\wedge dz\|_*^2\right)\\
 &=\frac{\varphi^2+(\varphi')^2}{\rho^{2k}J(t)^2}
 \left(\frac{r^{2k+2}}
 {1+r^2}+\rho^2\frac{r^{2k-2}}
 {1+r^2}\right).
\end{align*}

Substituting $r=\rho\varphi$ we obtain
\begin{equation}\label{eq:calibration-comass}
 \|\xi\|_*^2
 =\frac{\varphi^{2k-2}\left(\varphi^2+(\varphi')^2\right)}{J(t)^2}
 \frac{1+\rho^2\varphi^4}{1+\rho^2\varphi^2}.
\end{equation}
Since $0\leq\varphi\leq1$, the last factor in \eqref{eq:calibration-comass} is at most one. The comass bound \eqref{eq:comass-bound} now follows from Lemma~\ref{lem:trig-inequality} at the end of this subsection.

To prove (2), let $A,B:[0,\infty)\to\R$ satisfy 
 \begin{equation}\label{eq:AB-definition}
 A'(r)=\frac{r^{k+1}}{\sqrt{1+r^2}},
 \qquad
 B'(r)=\frac{r^{k-1}}{\sqrt{1+r^2}}
\end{equation}
 with $A(0)=B(0)=0$. On $\R^{2k+1}\setminus Z$, define
\begin{equation}\label{eq:Psi-definition}
 \Psi=(-1)^{k-1}\left(A(r)\Theta\wedge ds
 +B(r)\Theta\wedge dz\right).
\end{equation} 
Since $d\Theta=0$, $d\Psi=\xi$ and thus $\xi$ is closed on $\R^{2k+1}\setminus Z$.

We then control $\Psi$ and $\xi$ near $Z$. By \eqref{eq:adapted-singular-values} and \eqref{eq:rho-t-in-xy},
\[
\cos(2t)=\frac{\sqrt{(|x|^2-|y|^2)^2+4\langle x, y\rangle^2}}{|x|^2+|y|^2},
\]
and hence
\[
r=\left(\frac{(|x|^2-|y|^2)^2+4\langle x, y\rangle^2}{|x|^2+|y|^2}\right)^{1/2}.
\]
The function $r$ is smooth on $\R^{2k+1}\setminus Z$. Since $\Theta$, $ds$, and $dz$ are also smooth there, \eqref{eq:AB-definition} and \eqref{eq:Psi-definition} give $\Psi,\xi\in C^\infty(\R^{2k+1}\setminus Z)$ and $d\Psi=\xi$ on $\R^{2k+1}\setminus Z$. Directly from \eqref{eq:AB-definition},
\[
0\leq A(r)\leq\frac{r^{k+2}}{k+2},\qquad 0\leq B(r)\leq\frac{r^k}{k}.
\]
Using \eqref{eq:angular-comass} and $r=\rho\varphi$ gives
\[
\|\Psi\|_*
\leq A(r)\|\Theta\wedge ds\|_*+B(r)\|\Theta\wedge dz\|_*
\leq\frac{\rho^2\varphi^{k+2}}{(k+2)J(t)}
+\frac{\rho\varphi^k}{kJ(t)}.
\]
Moreover,
\[
\frac{\varphi^k}{J(t)}=\frac{(\cos(2t))^{k-1}}{(\cos t)^{k-2}},\qquad \frac{\varphi^{k+2}}{J(t)}=\frac{(\cos(2t))^{k+1}}{(\cos t)^{k-2}}.
\]
Both ratios are bounded on $[0,\pi/4]$. Defining $\Psi=\xi=0$ on $Z$, which does not change their $L^\infty_{\mathrm{loc}}$ classes, we obtain \eqref{eq:calibration-regularity}, that is,
\begin{equation*}
 \Psi,\xi\in C^\infty(\R^{2k+1}\setminus Z)
 \cap L^\infty_{\mathrm{loc}}(\R^{2k+1}),
 \qquad
 d\Psi=\xi\quad\text{on }\R^{2k+1}\setminus Z.
\end{equation*}
The local boundedness of $\xi$ follows from \eqref{eq:comass-bound}.

Finally, on $H_k\setminus Z$, one has $t=0$, $z=s$, $r=\rho$, $dr=d\rho$, and $dz=ds$. Hence, \eqref{eq:xi-decomposable} and \eqref{eq:helicoid-polar-volume} give
\begin{equation*}
 \left.\xi\right|_{H_k\setminus Z}
 =\rho^{k-1}\sqrt{1+\rho^2}\,
 \Theta\wedge d\rho\wedge ds
 =d\mu_{H_k}.
\end{equation*}

\end{proof}

\begin{lemma}\label{lem:trig-inequality}
For $k\geq6$, the function $\varphi(t)=\cos(2t)$ satisfies $0\leq\varphi\leq1$, $\varphi(0)=1$, $\varphi(\pi/4)=0$, and
\begin{equation}\label{eq:trig-inequality}
 \varphi^{2k-2}\left(\varphi^2+(\varphi')^2\right)\leq J(t)^2
 \qquad\text{on }\left[0,\frac{\pi}{4}\right).
\end{equation}
\end{lemma}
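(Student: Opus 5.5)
The plan is to reduce \eqref{eq:trig-inequality} to a one-variable inequality and prove it with elementary logarithm bounds. The properties $0\le\varphi\le1$, $\varphi(0)=1$ and $\varphi(\pi/4)=0$ are immediate from $\varphi(t)=\cos(2t)$ on $[0,\pi/4]$. For the main inequality I substitute $\varphi'=-2\sin(2t)$, so that $\varphi^2+(\varphi')^2=1+3\sin^2(2t)$. By \eqref{eq:J-definition}, $J(t)^2=(\cos t)^{2k-4}\cos^2(2t)$. On $[0,\pi/4)$ we have $\cos(2t)>0$, so I divide both sides by $\cos^2(2t)$. With $n=k-2\ge4$, the claim becomes
\[
 \Bigl(\frac{\cos^2(2t)}{\cos t}\Bigr)^{\!n}\bigl(1+3\sin^2(2t)\bigr)\le1 .
\]
Here I use $\varphi^{2k-4}=(\cos^2 2t)^{n}$ and $(\cos t)^{2k-4}=(\cos^2 t)^n$. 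Grouping the powers as $\bigl(\cos^2(2t)/\cos^2 t\bigr)^n$ gives the equivalent form
\[
 \Bigl(\frac{\cos^2(2t)}{\cos^2 t}\Bigr)^{\!n}\bigl(1+3\sin^2(2t)\bigr)\le1 .
\]

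Next I set $x=\sin^2 t\in[0,1/2)$. Then $\cos^2 t=1-x$, $\cos(2t)=1-2x$, and $\sin^2(2t)=4x(1-x)$. The inequality becomes
\[
 \Bigl(\frac{(1-2x)^2}{1-x}\Bigr)^{\!n}\bigl(1+12x(1-x)\bigr)\le1,\qquad 0\le x<\tfrac12 .
\]
The left side is decreasing in $n$, because the base lies in $(0,1]$. It therefore suffices to treat $n=4$, i.e.\ $k=6$, though the argument works uniformly for all $n\ge4$.

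To prove this, I take logarithms. I write
\[
 \frac{(1-2x)^2}{1-x}=1-y,\qquad y=\frac{x(3-4x)}{1-x}\in[0,1).
\]
Then I apply $\log(1-y)\le -y$ to the first factor and $\log(1+z)\le z$ to the second. It suffices to show
\[
 n\,\frac{x(3-4x)}{1-x}\ \ge\ 12x(1-x),
\]
that is, $n(3-4x)\ge12(1-x)^2$ for $x\in[0,1/2)$. For $n=4$ this reads $12-16x\ge12-24x+12x^2$, which is equivalent to $x(8-12x)\ge0$. This holds for $x\le 2/3$, and hence on the whole range.

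The only delicate point is choosing the right grouping and variable, so that the crude bounds $\log(1\pm\cdot)$ are sharp enough at $x=0$. There both sides vanish to first order, and the comparison of slopes is $3n$ against $12$. This explains why the lemma needs $n\ge4$, i.e.\ $k\ge6$. For $k=4$ ($n=2$) the slope comparison $6<12$ shows that the inequality genuinely fails near $t=0$, consistent with the remark that Lemma~\ref{lem:trig-inequality} breaks down for $k=4$.
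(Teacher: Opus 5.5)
Your proof is correct. It shares the paper's first move: rewrite $\varphi^2+(\varphi')^2=1+3\sin^2(2t)$, cancel $\cos^2(2t)$, and pass to a rational one-variable inequality. It then verifies that inequality by a different mechanism.

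The paper substitutes $\mathsf{t}=\tan^2 t$ and obtains $f(\mathsf t)=(1-\mathsf t)^{2k-4}(1+14\mathsf t+\mathsf t^2)(1+\mathsf t)^{-k}\le 1$. It then shows $f$ is non-increasing by splitting its logarithmic derivative as $-(k-6)\frac{3+\mathsf t}{1-\mathsf t^2}$ plus a manifestly nonpositive term, so $f\le f(0)=1$.

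You substitute $x=\sin^2 t$, write the base as $1-y$ with $y=x(3-4x)/(1-x)$, and apply the tangent-line bounds $\log(1-y)\le -y$ and $\log(1+z)\le z$. This reduces everything to the polynomial inequality $n(3-4x)\ge 12(1-x)^2$, and needs no differentiation. The threshold also appears transparently as $3n\ge 12$, which is the counterpart of the paper's $-(k-6)$ term. At $k=6$ the first-order terms cancel exactly in both approaches. Your crude logarithm bounds still suffice there because $-4y+z=-4x^2(2-3x)/(1-x)$ is strictly negative for $0<x<1/2$.

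The paper's argument gives slightly more: the ratio of the two sides of \eqref{eq:trig-inequality} is monotone in $t$. Only the inequality itself is used in Proposition~\ref{prop:calibration}, so your version is enough. Your reduction to $n=4$ via monotonicity in $n$ is valid but, as you say, unnecessary, since $n(3-4x)\ge 4(3-4x)$ on the whole range.

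One slip to fix: your first displayed reduction has $\cos t$ in the denominator where $\cos^2 t$ is meant. The subsequent ``equivalent form'' is the correct one, so simply drop the first display.
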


\begin{proof}
Since $\varphi'=-2\sin(2t)$, $\varphi^2+(\varphi')^2=1+3\sin^2(2t)$. Set $\mathsf{t}=\tan^2t\in[0,1)$. Using
\[
 \cos(2t)=\frac{1-\mathsf{t}}{1+\mathsf{t}},
 \qquad
 \sin^2(2t)=\frac{4\mathsf{t}}{(1+\mathsf{t})^2},
 \qquad
 \cos^2t=\frac1{1+\mathsf{t}},
\]
inequality \eqref{eq:trig-inequality} is equivalent to
\[
 f(\mathsf{t}):=
 \frac{(1-\mathsf{t})^{2k-4}(1+14\mathsf{t}+\mathsf{t}^2)}
 {(1+\mathsf{t})^k}\le1.
\]
Its logarithmic derivative is
\begin{align*}
 \frac{d}{d\mathsf{t}}\log f(\mathsf{t})
 &=-\frac{2k-4}{1-\mathsf{t}}
 +\frac{14+2\mathsf{t}}{1+14\mathsf{t}+\mathsf{t}^2}
 -\frac{k}{1+\mathsf{t}}\\
 &=-(k-6)\frac{3+\mathsf{t}}{1-\mathsf{t}^2}
 +(14+2\mathsf{t})\left(
 \frac1{1+14\mathsf{t}+\mathsf{t}^2}-\frac1{1-\mathsf{t}^2}
 \right)\le0.
\end{align*}
Thus $f$ is non-increasing on $[0,1)$, and since $f(0)=1$, the result follows.
\end{proof}

\subsection{Distributional closedness and area minimization}\label{sec:GMT}

Let $d(\cdot)=\dist(\cdot,Z)$. The extension across $Z$ uses the following codimension-two tubular estimate.

\begin{lemma}\label{lem:singular-set-estimate}
For every $R>0$, there exists $C_{k,R}>0$ such that
\begin{equation*}
 \Vol\bigl(\{d\leq\epsilon\}\cap B_R^{2k+1}(0)\bigr)
 \leq C_{k,R}\epsilon^2,
 \qquad \forall\epsilon>0.
\end{equation*}
\end{lemma}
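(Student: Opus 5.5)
Since $Z=Z_0\times\R$ and the $z$-direction only contributes a factor $2R$, the claim reduces to a volume bound in $\R^{2k}$:
\[
\Vol\bigl(\{(x,y):\dist((x,y),Z_0)\le\epsilon\}\cap B_R^{2k}(0)\bigr)\le C\epsilon^2 .
\]
The plan is to exhibit $Z_0$ as a smooth cone of codimension two away from the origin, and then apply a scaling argument to the link of the cone.

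\textbf{Step 1: $Z_0$ is a smooth codimension-two cone off the origin.} Identify $(x,y)$ with $\zeta=x+iy\in\C^k$. Then $Z_0=\{\zeta:\sum_j \zeta_j^2=0\}$, since
\[
\sum_j\zeta_j^2=|x|^2-|y|^2+2i\langle x,y\rangle .
\]
This is the complex quadric cone. Its defining polynomial $P(\zeta)=\sum\zeta_j^2$ has gradient $2\zeta$, which vanishes only at $0$. Hence $Z_0\setminus\{0\}$ is a smooth real submanifold of codimension two, invariant under dilations. Its link $L=Z_0\cap S^{2k-1}$ is a compact smooth $(2k-3)$-manifold (the Stiefel manifold of orthonormal pairs, scaled).

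\textbf{Step 2: a tubular bound on each dyadic shell.} Split $B_R$ into the ball $B_\epsilon$ and the dyadic shells $A_j=\{2^{-j-1}R<|\zeta|\le 2^{-j}R\}$ with $2^{-j}R\ge\epsilon$.

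On the fixed shell $A_0$, the set $Z_0\cap\overline{A_0}$ is a compact piece of a smooth codimension-two submanifold. The tubular neighborhood theorem then gives
\[
\Vol(\{d\le\epsilon\}\cap A_0)\le C\epsilon^2 \quad\text{for all }\epsilon\le\epsilon_0 .
\]
For $\epsilon>\epsilon_0$ the trivial bound $\Vol(B_R)\le C'\epsilon^2$ suffices. One point needs care: for points in $A_0$, the distance to $Z_0$ may be realized at points slightly outside $A_0$. I handle this by using the tube of a slightly enlarged closed shell, which is still compact and away from $0$.

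By homogeneity, $d(\lambda\zeta)=\lambda d(\zeta)$. Rescaling $A_j$ to $A_0$ by the factor $2^{j}$ gives
\[
\Vol(\{d\le\epsilon\}\cap A_j)=2^{-2kj}\Vol(\{d\le 2^j\epsilon\}\cap A_0)\le C\,2^{-2kj}\,2^{2j}\epsilon^2 .
\]
This is summable over $j$ because $2k-2>0$. Finally, the ball $B_\epsilon$ contributes $C\epsilon^{2k}\le C\epsilon^2$ when $\epsilon\le R$. Summing these contributions gives the claimed bound.

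\textbf{Main obstacle.} The only delicate point is the uniform tubular estimate near the singular vertex. The dyadic scaling argument above handles it, so the codimension-two estimate holds uniformly all the way to the origin.
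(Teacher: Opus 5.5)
Your argument is correct and rests on the same structural fact as the paper's proof. $Z_0$ is a cone over a smooth compact codimension-two link: your equation $\sum_j\zeta_j^2=0$ is exactly the vanishing of the paper's $F(x,y)=(|x|^2-|y|^2,\,2x\cdot y)$, and your regularity check is the same gradient computation. So a unit-scale tube estimate plus homogeneity gives the $\epsilon^2$ bound at every scale, with convergence coming from $2k-2>0$.

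The two proofs differ in how they pass from the unit scale to all scales. The paper applies the tube estimate to the \emph{closed} submanifold $K=Z_0\cap S^{2k-1}$ of the sphere. It then uses the elementary angular comparison $\dist(\theta,K)\le 2\epsilon/r$ for $r>\epsilon$ and integrates $\int_\epsilon^R r^{2k-1}(2\epsilon/r)^2\,dr$ in polar coordinates. That integral is the continuous counterpart of your dyadic sum $\sum_j 2^{-(2k-2)j}\epsilon^2$.

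Your route replaces the angular comparison by the exact scaling identity $d(\lambda\zeta)=\lambda d(\zeta)$. In exchange, you need a tube bound for a compact piece, with boundary, of the noncompact manifold $Z_0\setminus\{0\}$. Enlarging the shell as you do handles this correctly. Nearest points of $Z_0$ then lie in a slightly larger compact piece of the smooth part, so the normal map over that piece covers the tube. Alternatively, cover the piece by $O(\epsilon^{2-2k})$ balls of radius $\epsilon$ and double their radii.

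Two cosmetic remarks. Since $B_R\setminus\{0\}=\bigcup_{j\ge0}A_j$ and the series converges, you may sum over all $j$ and drop the $B_\epsilon$ term entirely. For $\epsilon>R$ the bound follows trivially from $\Vol(B_R)$.
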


\begin{proof}
Write $Z = Z_0 \times \R$, where, as recall from \eqref{eq:Z_0},
\[
 Z_0 = \{ (x,y) \in \R^{2k} : |x|=|y|, \, x \cdot y = 0 \},
\]
so that $d((x,y,z), Z) = \dist((x,y), Z_0)$. Set $K = Z_0 \cap S^{2k-1}$. On $S^{2k-1}$,
\[
 K=F^{-1}(0,0), \text{ where }F(x,y) = (|x|^2 - |y|^2, \, 2x \cdot y).
\]
At every point of $K$, the spherical gradients of the components of $F$ are $(2x, -2y)$ and $(2y, 2x)$, which are orthogonal and have norm $2$. Thus, $K \subset S^{2k-1}$ is a smooth compact submanifold of codimension $2$, and consequently
\[
\Vol_{2k-1}\left(\{ \theta \in S^{2k-1} : \dist(\theta, K) \le \delta \}\right) \le \tilde{c}_k\,\delta^2, \qquad \forall \delta > 0.
\]

Let $p = r\theta$ with $r > \epsilon$, $\theta \in S^{2k-1}$ and suppose $\dist(p, Z_0) \le \epsilon$. Choosing a closest point $s\eta \in Z_0$ with $s > 0$ and $\eta \in K$, we have $|r\theta - s\eta| \le \epsilon$ and $|r-s| \le \epsilon$, which yields
\[
\dist(\theta, K) \le |\theta - \eta| \le \frac{|r\theta - s\eta| + |r-s|}{r} \le \frac{2\epsilon}{r}.
\]

Integrating in polar coordinates for $0 < \epsilon \le R$ gives
\begin{align*}
\Vol\bigl(\{d \le \epsilon\} \cap B_R^{2k+1}\bigr)
&\le 2R \, \Vol_{2k}\bigl(\{ \dist(\cdot, Z_0) \le \epsilon \} \cap B_R^{2k}\bigr) \\
&= 2R \left[ \Vol_{2k}\bigl(\{ \dist(\cdot, Z_0) \le \epsilon \} \cap B_\epsilon^{2k}\bigr)\right.\\
&\hspace{4.8em}\left.{}+ \Vol_{2k}\bigl(\{ \dist(\cdot, Z_0) \le \epsilon \} \cap (B_R^{2k} \setminus B_\epsilon^{2k})\bigr) \right] \\
&\le 2R \left[ \Vol_{2k}(B_\epsilon^{2k})\right.\\
&\hspace{4.8em}\left.{}+ \int_\epsilon^R r^{2k-1} \Vol_{2k-1}\left(\left\{ \theta \in S^{2k-1} : \dist(\theta, K) \le \frac{2\epsilon}{r} \right\}\right) dr \right] \\
&\le 2R \left[ \omega_{2k}\epsilon^{2k} + \int_\epsilon^R r^{2k-1} \cdot c_k \left(\frac{2\epsilon}{r}\right)^2 dr \right] \\
&= 2R \left[ \omega_{2k}\epsilon^{2k-2} + \frac{2c_k}{k-1}\left(R^{2k-2} - \epsilon^{2k-2}\right) \right] \epsilon^2 \\
&\le C_{k,R} \epsilon^2.
\end{align*}
For $\epsilon > R$, the estimate follows immediately by bounding the volume by $\Vol(B_R^{2k+1})$.
\end{proof}

We now extend the relation $d\Psi=\xi$ across $Z$. Fix a test $k$-form $\eta \in C_c^\infty(\R^{2k+1}; \Omega^k)$ with $\operatorname{supp} \eta \subset B_R(0)$. Let $\chi_\epsilon \in C^\infty(\R^{2k+1})$ be a cutoff function with $\chi_\epsilon = 0$ on $\{d \le \epsilon\}$, $\chi_\epsilon = 1$ on $\{d \ge 2\epsilon\}$, and $|\nabla \chi_\epsilon| \le \frac{2}{\epsilon}$. By \eqref{eq:calibration-regularity}, $\Psi$ and $\xi$ are smooth on $\operatorname{supp}\chi_\epsilon$ and $d\Psi=\xi$ there. Hence
\[
\int_{\R^{2k+1}} \chi_\epsilon \xi \wedge \eta = (-1)^{k+1} \int_{\R^{2k+1}} \chi_\epsilon \Psi \wedge d\eta + (-1)^{k+1} \int_{\R^{2k+1}} d\chi_\epsilon \wedge \Psi \wedge \eta.
\]
Using Lemma~\ref{lem:singular-set-estimate} and $\Psi \in L^\infty_{\mathrm{loc}}(\R^{2k+1}; \Omega^k)$, the error term satisfies
\[
\left| \int_{\R^{2k+1}} d\chi_\epsilon \wedge \Psi \wedge \eta \right| \le \|\Psi\|_{L^\infty(B_R)} \|\eta\|_{L^\infty} \cdot \frac{2}{\epsilon} \Vol(\{\epsilon \le d \le 2\epsilon\} \cap B_R) \le C_R' \epsilon \to 0
\]
as $\epsilon \to 0$. Since $\Psi, \xi \in L^\infty_{\mathrm{loc}}$, dominated convergence yields
\begin{equation*}
\int_{\R^{2k+1}} \xi \wedge \eta = (-1)^{k+1} \int_{\R^{2k+1}} \Psi \wedge d\eta.
\end{equation*}
Thus $d\Psi=\xi$ distributionally. Consequently, for any $\zeta \in C_c^\infty(\R^{2k+1}; \Omega^{k-1})$,
\[
\int_{\R^{2k+1}} \xi \wedge d\zeta = (-1)^{k+1} \int_{\R^{2k+1}} \Psi \wedge d^2\zeta = 0,
\]
so
\begin{equation}\label{eq:distributional-closedness}
 d\xi=0
\end{equation}
distributionally on $\R^{2k+1}$.

\begin{proof}[Proof of Theorem~\ref{thm:area-minimizing}]
The form $\xi$ is distributionally closed by \eqref{eq:distributional-closedness}, has comass at most one by \eqref{eq:comass-bound}, and restricts to the oriented volume form of $H_k\setminus Z$ by \eqref{eq:calibration-restriction}. Since $H_k\cap Z$ has zero $(k+1)$-dimensional measure, $\xi$ calibrates $H_k$.

Let $M'$ be a smooth oriented competitor homologous to $H_k$ that agrees with $H_k$ outside a compact set. Choose compact portions $P\subset H_k$ and $P'\subset M'$ containing the region where they differ, and a compact oriented $(k+2)$-chain $S$ such that $P-P'=\partial S$. Fix a nonnegative function $\vartheta\in C_c^\infty(B_1^{2k+1}(0))$ satisfying $\int_{\R^{2k+1}}\vartheta=1$. For $\epsilon>0$, define
\[
 \vartheta_\epsilon(p)=\epsilon^{-(2k+1)}
 \vartheta\left(\frac{p}{\epsilon}\right),
 \qquad
 \xi_\epsilon(p)
 =(\vartheta_\epsilon*\xi)(p)
 =\int_{\R^{2k+1}}\vartheta_\epsilon(q)\xi(p-q)\,dq.
\]
Then $\operatorname{supp}\vartheta_\epsilon\subset B_\epsilon^{2k+1}(0)$. Since $d\xi=0$ distributionally, $d\xi_\epsilon =d(\vartheta_\epsilon*\xi) =\vartheta_\epsilon*d\xi =0$.
Moreover, the comass bound is preserved under convolution
\[
 \|\xi_\epsilon(p)\|_*
 \leq\int_{\R^{2k+1}}
 \vartheta_\epsilon(q)\|\xi(p-q)\|_*\,dq
 \leq\int_{\R^{2k+1}}\vartheta_\epsilon(q)\,dq
 =1.
\]
Since $\xi_\epsilon$ is smooth and closed, Stokes' theorem gives
\[
 0=\int_Sd\xi_\epsilon
 =\int_{\partial S}\xi_\epsilon
 =\int_P\xi_\epsilon-\int_{P'}\xi_\epsilon.
\]
On $H_k\setminus Z$, the forms $\xi_\epsilon$ converge locally uniformly to $\xi$. Therefore, by dominated convergence and \eqref{eq:calibration-restriction},
\[
 \Vol(P)
 =\lim_{\epsilon\to0}\int_P\xi_\epsilon
 =\lim_{\epsilon\to0}\int_{P'}\xi_\epsilon
 \leq\Vol(P').
\]
Therefore $H_k$ is area-minimizing.
\end{proof}
\section{Calibration for \texorpdfstring{$H_4$}{H4}}\label{sec:calibration-H4}

We identify $\R^8=\mathbb{H}\oplus\mathbb{H}$ and write a point of $\R^9$ as $(x,y,z)$, where $x,y\in\mathbb{H}$. Writing $x=x_0+x_1i+x_2j+x_3k$ and $y=y_0+y_1i+y_2j+y_3k$, the three K\"ahler forms induced by simultaneous left multiplication by $i,j,k$ on the two quaternionic factors are
\begin{align*}
    \omega_1&=dx_0\wedge dx_1+dx_2\wedge dx_3+dy_0\wedge dy_1+dy_2\wedge dy_3,\\
    \omega_2&=dx_0\wedge dx_2-dx_1\wedge dx_3+dy_0\wedge dy_2-dy_1\wedge dy_3,\\
    \omega_3&=dx_0\wedge dx_3+dx_1\wedge dx_2+dy_0\wedge dy_3+dy_1\wedge dy_2.
\end{align*}
The $4$-form introduced by Kraines \cite{KrainesQuaternionic} is
\begin{equation}\label{eq:Kraines-4-form}
    \kappa=\frac16(\omega_1^2+\omega_2^2+\omega_3^2),
\end{equation}
normalized to have comass one. Set $\rho=\sqrt{|x|^2+|y|^2}$ and $h=\sqrt{1+\rho^2}$. On the same $\R^8$, define the $2$-form
\[
    \omega=\sum_{a=0}^3dx_a\wedge dy_a.
\]
Equivalently, $\omega(U,V)=\langle(-U_2,U_1),V\rangle$ for $U=(U_1,U_2)$ and $V\in\mathbb{H}\oplus\mathbb{H}$. It is the K\"ahler form of the orthogonal complex structure $(x,y)\mapsto(-y,x)$. Since $\rho\pr_\rho=(x,y)$, one has $\iota_{\rho\pr_\rho}\omega=(-y,x)^\flat$, so this contraction is precisely the horizontal part of the screw direction. All these forms are defined on $\R^8$ and are pulled back to $\R^9=\R^8\times\R_z$ without further notation.

Recall from \eqref{eq:helicoid-parametrization} that
\[
    H_4=\{(\cos s\,u,\sin s\,u,s):u\in\mathbb{H},\ s\in\R\}.
\]
The restriction of $\rho$ to $H_4$ is $|u|$. Writing $u=u_0+u_1i+u_2j+u_3k$,  the induced metric and volume form are
\begin{align}\label{eq:h4-induced-geometry} \begin{split}
    g_{H_4} 
    &=\sum_{a=0}^3(du^a)^2+(1+|u|^2)ds^2, \\
    d\mu_{H_4}&=\sqrt{1+|u|^2}\,du^0\wedge du^1\wedge du^2\wedge du^3\wedge ds.
\end{split} \end{align}
At $(x,y,z)=(\cos z\,u,\sin z\,u,z)\in H_4$,
\begin{equation}\label{eq:h4-tangent-splitting}
    T_{(x,y,z)}H_4=\{(\cos z\,v,\sin z\,v,0):v\in\mathbb{H}\}\oplus\operatorname{span}\{(-y,x)+\pr_z\}.
\end{equation}
We orient $H_4$ by the parametrization $(u,s)\mapsto(\cos s\,u,\sin s\,u,s)$. The first summand of \eqref{eq:h4-tangent-splitting} is the quaternionic ruling at height $z$. Denote its oriented unit simple $4$-vector by $P_z$, where
\begin{align}\label{eq:h4-Pz}
    P_z &= (\cos z,\sin z,0)\wedge(\cos z\,i,\sin z\,i,0)\wedge(\cos z\,j,\sin z\,j,0)\wedge(\cos z\,k,\sin z\,k,0).
\end{align}
Direct substitution of \eqref{eq:h4-Pz} into \eqref{eq:Kraines-4-form} gives $\kappa(P_z)=1$. By \eqref{eq:h4-induced-geometry}, the splitting in \eqref{eq:h4-tangent-splitting} is orthogonal, $|P_z|=1$, and
\begin{equation}\label{eq:h4-tangent-volume}
 d\mu_{H_4}(P_z,(-y,x)+\pr_z)=|P_z\wedge((-y,x)+\pr_z)|=\sqrt{1+\rho^2}=h.
\end{equation}

Set
\begin{equation}\label{eq:h4-A-B}
 A(\rho)=\frac{3h^2+9h+8}{15(h+1)^3},\qquad B(\rho)=\frac{h+2}{3(h+1)^2}
\end{equation}
and define
\begin{equation}\label{eq:h4-potential}
 \Psi=\iota_{\rho\pr_\rho}\kappa\wedge\left(A(\rho)\,\iota_{\rho\pr_\rho}\omega + B(\rho)\,dz\right),\qquad \xi=d\Psi.
\end{equation}
In the following proposition, $\xi$ will be shown to be a calibration form on $\R^9$ that calibrates $H_4$.

\begin{remark}\label{rem:ansatz}
The motivation for the ansatz \eqref{eq:h4-potential} is as follows. Since
\[
d\left(\frac14\iota_{\rho\pr_\rho}\kappa\right)=\kappa,
\qquad
\kappa(P_z)=1,
\]
the form $\frac14\iota_{\rho\pr_\rho}\kappa$ provides a natural $3$-form whose exterior derivative agrees with the volume form along each quaternionic ruling. The vertical and horizontal parts of the remaining tangent direction $(-y,x)+\pr_z$ are detected by $dz$ and $\iota_{\rho\pr_\rho}\omega=(-y,x)^\flat$, respectively. This leads to the two terms in \eqref{eq:h4-potential}. Comparing with the form $\Psi$ defined in \eqref{eq:Psi-definition} for even $k\ge6$, the two constructions correspond under
\[
\Theta\wedge ds
\longleftrightarrow
\iota_{\rho\pr_\rho}\kappa
\wedge\iota_{\rho\pr_\rho}\omega,
\qquad
\Theta\wedge dz
\longleftrightarrow
\iota_{\rho\pr_\rho}\kappa\wedge dz.
\]
\end{remark}

\begin{proposition}\label{prop:h4-criterion}
The functions \eqref{eq:h4-A-B} satisfy
\begin{equation}\label{eq:h4-coefficient-equations}
    \rho A_\rho+6A=\frac1h, \quad \rho B_\rho+4B=\frac1h, 
\end{equation}
and
\begin{equation}\label{eq:h4-coefficient-inequalities}
    16B^2+24\rho^2A^2\leq1.
\end{equation}
With \eqref{eq:h4-coefficient-equations} and \eqref{eq:h4-coefficient-inequalities}, $\xi$ is a smooth calibration form on $\mathbb R^9$, and calibrates $H_4$.  Consequently, $H_4$ is area-minimizing.
\end{proposition}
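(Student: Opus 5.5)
The plan has four steps: verify the ODEs and the inequality for $A,B$ directly, show $\xi$ is smooth and closed, bound its comass, and check that it restricts to the volume form of $H_4$.

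\emph{Step 1: the coefficient relations.} Since $h^2=1+\rho^2$, we have $\rho\pr_\rho=\frac{\rho^2}{h}\pr_h=\frac{h^2-1}{h}\pr_h$. So \eqref{eq:h4-coefficient-equations} becomes the rational identities $\frac{h^2-1}{h}A'(h)+6A=\frac1h$ and the analogue for $B$. These are checked by clearing denominators; for instance, $B'(h)=-\frac{h+3}{3(h+1)^3}$ makes the $B$ identity immediate. Equivalently, $A=\rho^{-6}\int_0^\rho \frac{t^5}{\sqrt{1+t^2}}\,dt$ and $B=\rho^{-4}\int_0^\rho\frac{t^3}{\sqrt{1+t^2}}\,dt$, which shows that $A$ and $B$ are smooth and even in $\rho$. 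For \eqref{eq:h4-coefficient-inequalities}, substitute $\rho^2=(h-1)(h+1)$. This reduces it to the polynomial inequality
\[
 16\cdot 25(h+2)^2(h+1)^2+24(h-1)(3h^2+9h+8)^2\le 225(h+1)^5 \qquad\text{for } h\ge1,
\]
with equality at $h=1$. One checks it by expanding in $h-1$ and verifying that all coefficients are nonnegative.

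\emph{Step 2: smoothness and closedness of $\xi$.} The forms $\iota_{\rho\pr_\rho}\kappa$ and $\iota_{\rho\pr_\rho}\omega$ are contractions of constant forms with the linear vector field $(x,y)$, so they are polynomial. Since $A,B$ are smooth functions of $\rho^2$, $\Psi$ is smooth on all of $\R^9$, and $\xi=d\Psi$ is smooth and exact, hence closed. There is no singular set here, unlike in Section~\ref{sec:calibration-Hk-even}.

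Next compute $\xi$. Using $d\iota_{\rho\pr_\rho}\kappa=4\kappa$ and $d\iota_{\rho\pr_\rho}\omega=2\omega$ (Cartan's formula, with $\mathcal L_{\rho\pr_\rho}$ scaling a constant $p$-form by $p$), together with $dA=A_\rho\,d\rho$ and $\rho\,d\rho\wedge\iota_{\rho\pr_\rho}\kappa\wedge\iota_{\rho\pr_\rho}\omega$-type terms, I expect to obtain
\[
\xi=(\rho A_\rho+6A)\,\kappa\wedge\iota_{\rho\pr_\rho}\omega+(\rho B_\rho+4B)\,\kappa\wedge dz+R
 =\frac1h\,\kappa\wedge\bigl(\iota_{\rho\pr_\rho}\omega+dz\bigr)+R.
\]
Here the remainder $R$ is built from $A\,\iota_{\rho\pr_\rho}\kappa\wedge(2\omega-\ldots)$ corrections, i.e.\ terms of the form $\iota_{\rho\pr_\rho}\kappa\wedge\omega$ minus its radial part. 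The precise bookkeeping uses $\rho\,d\rho\wedge\iota_{\rho\pr_\rho}\alpha=\rho^2\alpha-\iota_{\rho\pr_\rho}(\rho\,d\rho\wedge\alpha)$; the coefficients $6$ and $4$ in \eqref{eq:h4-coefficient-equations} should arise exactly from this. On $H_4$, the $\rho\pr_\rho$ direction lies in the ruling $P_z$, and $\iota_{\rho\pr_\rho}\omega$ evaluated on the ruling vanishes, since $\omega((\cos z\,v,\sin z\,v),(\cos z\,v',\sin z\,v'))=0$. So $R$ vanishes on $TH_4$. Combining $\kappa(P_z)=1$ with $(\iota_{\rho\pr_\rho}\omega+dz)((-y,x)+\pr_z)=\rho^2+1=h^2$ gives $\xi(P_z\wedge((-y,x)+\pr_z))=h$. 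By \eqref{eq:h4-tangent-volume}, this is $d\mu_{H_4}$.

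\emph{Step 3: comass $\le1$ — the main obstacle.} By $Sp(1)\times SO(2)\times$ translation-in-$z$ symmetry (which preserves $\kappa$, $\omega$, $\rho$, and $dz$), one can normalize the base point to $(x,y)=(\rho,0)$. Then $\xi$ at that point is a specific 5-form on $\R^9$. Its principal part is $h^{-1}\kappa\wedge\bigl(\rho\,(dy_0)+dz\bigr)$, and the correction terms carry coefficients $A\rho^2$ and $B$ in the directions transverse to $\pr_\rho$.

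I would split $\xi$ as
\[
\xi=d\rho\wedge\alpha+\beta,
\]
and use the comass bound for $\kappa$ (comass one, by Kraines) together with the comass of wedge products with unit covectors. The goal is to reduce $\|\xi\|_*^2$ to
\[
\frac{1+\rho^2}{h^2}\cdot\max\bigl(1,\;16B^2+24\rho^2A^2\bigr)-\text{type expression},
\]
so that \eqref{eq:h4-coefficient-inequalities} is exactly what is needed. I expect the delicate point to be estimating the comass of mixed terms such as $\iota_{\pr_{x_0}}\kappa\wedge\omega$-pieces. Here $16$ and $24$ should be the squared norms of $d\iota_{\rho\pr_\rho}\kappa$-components from the $dz$ and $\omega$ parts respectively, i.e.\ $4^2$ and $6\cdot 4$.

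If a direct comass computation is too hard, I would instead use the quaternionic structure. Every unit simple 5-vector decomposes as $e\wedge\zeta$, and then I would apply the Cauchy–Schwarz-type bound $|\xi(e\wedge\zeta)|^2\le(\text{radial coefficient})^2+(\text{transverse coefficient})^2$, using the fact that Kraines' form restricted to any 4-plane is at most one.

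\emph{Step 4: conclusion.} With $\xi$ smooth, closed, of comass $\le1$, and restricting to $d\mu_{H_4}$, the Stokes argument from the proof of Theorem~\ref{thm:area-minimizing} in Section~\ref{sec:GMT} applies verbatim, without mollification. This gives that $H_4$ is area-minimizing.
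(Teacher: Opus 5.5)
Steps 1, 2 and 4 are essentially fine, with one slip in Step 1. Clearing denominators in \eqref{eq:h4-coefficient-inequalities} gives $400(h+2)^2(h+1)+24(h-1)(3h^2+9h+8)^2\le225(h+1)^5$. With $(h+1)^2$ in the first term, as you wrote it, the inequality fails at $h=1$ ($14400>7200$). With $x=h-1$, the correct difference is $400x^2+320x^3+90x^4+9x^5$. Your Step 2 remainder can be made exact. With $e=\pr_\rho$ and $\kappa=e^\flat\wedge\alpha+\beta$, $\iota_e\beta=0$, one has
\[\xi=\tfrac1h\,\kappa\wedge(\iota_{\rho\pr_\rho}\omega+dz)-\rho A_\rho\,\beta\wedge\iota_{\rho\pr_\rho}\omega-\rho B_\rho\,\beta\wedge dz-2A\,\iota_{\rho\pr_\rho}(\kappa\wedge\omega).\]
The last three terms vanish on $TH_4$ because $\rho\pr_\rho\in P_z$, while $\iota_{\rho\pr_\rho}\omega$ and $dz$ vanish on $P_z$.

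The genuine gap is Step~3, which is the substance of the proposition.

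First, the normalization to $(x,y)=(\rho,0)$ is not available. Consider the angle $t$ with $\sin(2t)=2|\operatorname{Im}(x\bar y)|/\rho^2$; geometrically, $\cos(2t)$ is the length of the component of $\mathcal Je$ orthogonal to $\mathbb He$. This angle is unchanged by simultaneous left or right multiplication by unit quaternions, by $SO(2)$ rotations of $(x,y)$, and by $z$-translations, and it vanishes at $(\rho,0)$. So your reduction only covers $\{t=0\}$, i.e.\ the points $(\cos\phi\,u,\sin\phi\,u,z)$ over the rank-one cone, a $6$-dimensional subset of $\R^9$.

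This is not a harmless loss. At $t=0$, the identity \eqref{eq:theta-identity} kills the term $2\rho A\,dz\wedge e^\flat\wedge(\omega+3\iota_{\mathcal Je}\alpha)$ in $*_9\xi$. After \eqref{eq:h4-coefficient-equations}, $A$ then disappears from $*_9\xi$ altogether, so the $24\rho^2A^2$ part of \eqref{eq:h4-coefficient-inequalities} plays no role on the locus you kept. The hard case is $t>0$. There $\mathcal Je=\cos(2t)\,q-\sin(2t)\,Ie$ has a component inside $\mathbb He$, and $*_9\xi$ carries the extra term $6\rho A\sin(2t)\,dz\wedge e^\flat\wedge\varsigma$, which your plan never confronts.

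Second, what you sketch is not yet an estimate, even at $t=0$. The principal part $\frac1h\kappa\wedge(\iota_{\rho\pr_\rho}\omega+dz)$ is $\kappa$ wedged with a unit covector, so it already has comass $1$, attained on $TH_4$. Bounding the pieces separately via $\|\kappa\|_*=1$ therefore overshoots. The target $\max(1,16B^2+24\rho^2A^2)$ with $24=6\cdot4$ is a guess, not a derivation.

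The missing ingredients are those of the paper:
\begin{enumerate}
\item the exact form \eqref{eq:h4-hodge-formula} of $*_9\xi$, obtained from the self-duality of $\kappa$;
\item the reduction to $4$-planes $\Pi_h\wedge(\cos\theta\,v+\sin\theta\,\pr_z)$ with $\Pi_h=(\cos\varphi\,e+\sin\varphi\,u)\wedge v\wedge w$, which turns $\|\xi\|_*\le1$ into nonnegativity of a $2\times2$ form with entries $\mathsf E,\mathsf F,\mathsf G$;
\item the bounds $|\varsigma(v,w)|\le\frac23$ and Lemma~\ref{lem:h4-fixed}.
\end{enumerate}
The factor $3$ on $(\iota_{Ie}\beta)(\eta)^2$ in Lemma~\ref{lem:h4-fixed} comes from the Cayley calibrations $\omega_I^2-3\kappa$, $\omega_J^2-3\kappa$, $\omega_K^2-3\kappa$. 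The comass of $\kappa$ alone gives only the factor $1$, and the extra slack is what absorbs the cross term $\mathsf F$. In that argument $24=36\cdot\frac23$, not $6\cdot4$. The hypothesis gives $\mathsf E\le1$ while leaving the slack $36\rho^2A^2(\frac23-\varsigma(v,w)^2)$, which is also consumed by $\mathsf F$.
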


\begin{proof}
The verifications of \eqref{eq:h4-coefficient-equations} and \eqref{eq:h4-coefficient-inequalities} are direct computations.

\textit{Step 1: smoothness.} The function $h=(1+|x|^2+|y|^2)^{1/2}$ is smooth and positive on $\R^8$, so $A$ and $B$ in \eqref{eq:h4-A-B} are smooth; and $\rho\pr_\rho=(x,y)$ is the Euler field of $\R^8$, so the two contractions in \eqref{eq:h4-potential} have polynomial coefficients. Hence $\Psi$, and therefore $\xi=d\Psi$, is smooth on $\R^9$.

\textit{Step 2: restriction to $H_4$.}
A straightforward computation using Cartan's formula shows that
\[
 d\iota_{\rho\pr_\rho}\kappa=4\kappa,
 \qquad d\iota_{\rho\pr_\rho}\omega=2\omega.
\]
Taking the exterior derivative on \eqref{eq:h4-potential} gives that
\begin{align}\label{eq:h4-xi-expansion}\begin{split}
    \xi &= A_\rho\,d\rho\wedge\iota_{\rho\pr_\rho}\kappa\wedge\iota_{\rho\pr_\rho}\omega + 4A\,\kappa\wedge\iota_{\rho\pr_\rho}\omega - 2A\,\iota_{\rho\pr_\rho}\kappa\wedge\omega \\
    &\quad + B_\rho\,d\rho\wedge\iota_{\rho\pr_\rho}\kappa\wedge dz + 4B\,\kappa\wedge dz.
\end{split}\end{align}

Contracting \eqref{eq:h4-xi-expansion} against the frame \eqref{eq:h4-tangent-splitting} is a straightforward computation:
\[
 \xi(P_z,(-y,x)+\pr_z)=\rho^2(\rho A_\rho+6A)+(\rho B_\rho+4B).
\]
A useful observation for the computation is that at any point of $H_4$, $\rho\pr_\rho$ belongs to the ruling $P_z$. Due to \eqref{eq:h4-coefficient-equations}, the preceding expression equals $h$. Together with \eqref{eq:h4-tangent-volume}, this gives $\left.\xi\right|_{H_4}=d\mu_{H_4}$. We retain the two identities in \eqref{eq:h4-coefficient-equations} separately because both are used in the comass estimate.

\textit{Step 3: the comass estimate.} Put $e=(x,y)/\rho$ and write $\kappa$ by using the decomposition $\Lambda^4\R^8=\Lambda^4e^\perp\oplus e^\flat\wedge\Lambda^3e^\perp$:
\[
 \kappa=e^\flat\wedge\alpha+\beta,\qquad \alpha=\iota_e\kappa,\qquad \iota_e\beta=0 .
\]
Since the form $\kappa$ in \eqref{eq:Kraines-4-form} is self-dual,
\begin{align} \label{eqn-Kraines-sd-1}
    *_8(e^\flat\wedge\alpha)=\beta,\qquad *_8\beta=e^\flat\wedge\alpha .
\end{align}
Combining these with the identity $*_8(\,\cdot\,\wedge v^\flat)=\iota_v\left(*_8\,\cdot\,\right)$ gives, for every unit vector $v$ orthogonal to $e$,
\begin{align} \label{eqn-Kraines-sd-2}
    *_8(e^\flat\wedge\alpha\wedge v^\flat)=\iota_v\beta,\qquad
    *_8(\beta\wedge v^\flat)=\iota_v\left(e^\flat\wedge\alpha\right)=-e^\flat\wedge\iota_v\alpha.
\end{align}
Let $\mathcal J(x,y)=(-y,x)$ be the orthogonal complex structure whose K\"ahler form is $\omega$; $\mathcal J$ commutes with the quaternionic multiplication on $\mathbb R^8 = \mathbb H\oplus\mathbb H$. Consider
\[
 \mathcal Je=\frac{(-y,x)}\rho.
\]
Note that $\mathcal Je$ is a unit vector orthogonal to $e$. Decomposing $\mathcal Je$ with respect to $(\mathbb He)^\perp\oplus\mathbb He$ therefore gives, for a suitable $I$ in the $S^2$-family of quaternionic complex structures,  a unit vector $q\in (\mathbb He)^\perp$ and an angle $0\leq t\leq\frac{\pi}{4}$ such that
\begin{equation}\label{eq:h4-screw-decomposition}
 \mathcal Je = \cos(2t)\,q-\sin(2t)\,Ie,\qquad \sin(2t)=\frac{2\left|\operatorname{Im}(x\bar y)\right|}{\rho^2}.
\end{equation}
Since $|\operatorname{Im}(x\bar y)|^2 =|x|^2|y|^2-\langle x,y\rangle^2$, this is the same angular variable $t$ as in \eqref{eq:rho-t-in-xy}. The set where $\rho>0$ and $t<\frac\pi4$ is the complement in $\R^9$ of the zero set of the $z$-independent nonzero polynomial $(|x|^2+|y|^2)^2-4|\operatorname{Im}(x\bar y)|^2$, hence open and dense.  Since $\|\xi\|_*$ is a continuous function of the point, it suffices to prove $\|\xi\|_*\leq1$ on this set.  Fix such a point and write $L=\mathbb He$, so that $L^\perp=\mathbb Hq$; let $\omega_I$ be the K\"ahler form of $I$ and set
\[
 \varsigma =\frac23\left.\omega_I\right|_{\langle e,Ie,q,Iq\rangle^\perp}.
\]

\textit{(a) An adapted frame.} Complete $I$ to an oriented orthonormal triple $I_1=I$, $I_2$, $I_3$ in the $S^2$-family, so that $I_1I_2=I_3$, and for the rest of Step 3 let $\omega_1,\omega_2,\omega_3$ denote the K\"ahler forms of this triple; the expression $\kappa=\frac16(\omega_1^2+\omega_2^2+\omega_3^2)$ is invariant under such rotations of the triple. Since $q\perp L$ and the $I_i$ are isometries preserving $L$ and $L^\perp$,
\[
    e_0=e,\quad e_a=I_ae,\quad f_0=q,\quad f_a=I_aq\qquad(a=1,2,3)
\]
is an orthonormal frame of $\R^8$ in which each $\omega_i$ takes its standard coordinate form. Each $I_i$ preserves $L$, so $\omega_i=\omega_i^L+\omega_i^{L^\perp}$, and squaring gives
\begin{equation}\label{eq:h4-kappa-blocks}
    \kappa=\mathrm{vol}_L+\mathrm{vol}_{L^\perp}+\frac13\sum_{i=1}^3\omega_i^L\wedge\omega_i^{L^\perp}.
\end{equation}
Write $\omega_i^L=e^{0i}+\varepsilon_i$ and $\omega_i^{L^\perp}=f^{0i}+\tau_i$, where $(\varepsilon_1,\varepsilon_2,\varepsilon_3)=(e^{23},e^{31},e^{12})$ and $\tau_i$ is the same expression in the $f$'s. Then \eqref{eq:h4-kappa-blocks} gives
\[
    \alpha=e^{123}+\frac13\sum_ie^i\wedge\omega_i^{L^\perp},\qquad
    \beta=\mathrm{vol}_{L^\perp}+\frac13\sum_i\varepsilon_i\wedge\omega_i^{L^\perp},
\]
and
\[
    \iota_q\beta=f^{123}+\frac13\sum_i\varepsilon_i\wedge f^i,\qquad
    \beta_q:=\beta-q^\flat\wedge\iota_q\beta=\frac13\sum_i\varepsilon_i\wedge\tau_i .
\]
It follows that $\omega_1=e^{01}+\varepsilon_1+f^{01}+\tau_1$, $\langle e,Ie,q,Iq\rangle^\perp=\langle e_2,e_3,f_2,f_3\rangle$, and
    \[
        \varsigma=\frac23\left(\varepsilon_1+\tau_1\right),\qquad\text{whence}\quad\|\varsigma\|_*=\frac23 .
    \]

\textit{(b) A useful identity.} The following identity will be useful for the estimate of the comass:
\begin{equation}\label{eq:theta-identity}
    e^\flat\wedge\left(\omega+3\,\iota_{\mathcal Je}\alpha\right)=-3\sin(2t)\,e^\flat\wedge\varsigma,
\end{equation}
where $e^\flat=e^0$ in the frame introduced in (a). To prove this identity, first observe that \eqref{eq:h4-screw-decomposition} implies
\[
    \mathcal Je_1=\cos(2t)\,f_1+\sin(2t)\,e,\quad
    \mathcal Je_2=\cos(2t)\,f_2+\sin(2t)\,e_3,\quad
    \mathcal Je_3=\cos(2t)\,f_3-\sin(2t)\,e_2 .
\]
Since $t<\frac\pi4$, it is not hard to find that $\mathcal Jq=-\cos(2t)\,e+\sin(2t)\,f_1$, and hence,
\[
    \mathcal Jf_1=-\cos(2t)\,e_1-\sin(2t)\,q,\quad
    \mathcal Jf_2=-\cos(2t)\,e_2-\sin(2t)\,f_3,\quad
    \mathcal Jf_3=-\cos(2t)\,e_3+\sin(2t)\,f_2 .
\]
These relations allow us to write down $\omega$ in terms of the basis $\{e_0,\ldots,e_3,f_0,\cdots,f_3\}$:
\[
    \omega=\cos(2t)\sum_{a=0}^3e^a\wedge f^a+\sin(2t)\left(-e^0\wedge e^1+f^0\wedge f^1+e^2\wedge e^3-f^2\wedge f^3\right).
\]

On the other hand, contracting the formulas of (a) gives $\iota_q\alpha=-\frac13\sum_{i=1}^3e^i\wedge f^i$ and $\iota_{Ie}\alpha=e^{23}+\frac13\left(f^{01}+f^{23}\right)$, so by \eqref{eq:h4-screw-decomposition},
\[
    \omega+3\,\iota_{\mathcal Je}\alpha=\cos(2t)\,e^0\wedge f^0-\sin(2t)\,e^0\wedge e^1-2\sin(2t)\left(e^{23}+f^{23}\right);
\]
wedging the formula with $e^\flat = e^0$ proves \eqref{eq:theta-identity}.

\textit{(c) The Hodge star of $\xi$.} Since $d\rho=e^\flat$, $\iota_{\rho\pr_\rho}\kappa=\rho\alpha$, $\iota_{\rho\pr_\rho}\omega=\rho (\mathcal Je)^\flat$ and $\kappa\wedge\omega=-*_8\omega$, \eqref{eq:h4-xi-expansion} can be rewritten as
\[
    \xi=\rho^2A_\rho\,e^\flat\wedge\alpha\wedge (\mathcal Je)^\flat+6\rho A\,\kappa\wedge (\mathcal Je)^\flat+2\rho A\,\iota_e\left(*_8\omega\right)
    +\rho B_\rho\,e^\flat\wedge\alpha\wedge dz+4B\,\kappa\wedge dz.
\]
Equip $\R^8\oplus\R\pr_z$ with the product orientation.  By \eqref{eqn-Kraines-sd-1}, \eqref{eqn-Kraines-sd-2}, $\iota_e*_8\omega=*_8(\omega\wedge e^\flat)$, and writing $\beta=(\mathcal Je)^\flat\wedge\iota_{\mathcal Je}\beta+\beta_{\mathcal Je}$ with $\iota_{\mathcal Je}\beta_{\mathcal Je}=0$, the Hodge star of $\xi$ is
\begin{align}
    *_9\xi &= (\rho^2 A_\rho+6\rho A)\,\iota_{\mathcal Je}\beta\wedge dz - 6\rho A\,e^\flat\wedge\iota_{\mathcal Je}\alpha\wedge dz - 2\rho A\,\omega\wedge e^\flat\wedge dz \notag \\
    &\quad + \rho B_\rho\,\beta + 4B\,\kappa \notag \\
    &= 4B\,e^\flat\wedge\alpha + (4B + \rho B_\rho)\,\beta - \rho\left(\rho A_\rho+6A\right)dz \wedge\iota_{\mathcal Je}\beta + 2\rho A\,dz\wedge e^\flat\wedge\left(\omega+3\,\iota_{\mathcal Je}\alpha\right). \label{eq:h4-hodge-general}
\end{align}
Applying \eqref{eq:h4-coefficient-equations} and \eqref{eq:theta-identity} to \eqref{eq:h4-hodge-general}, we obtain
\begin{equation}\label{eq:h4-hodge-formula}
 *_9\xi=4B\,e^\flat\wedge\alpha+\frac1h\beta-dz\wedge\left(\frac\rho h\iota_{\mathcal Je}\beta+6\rho A\sin(2t)\,e^\flat\wedge\varsigma\right).
\end{equation}

\textit{(d) The reduction.} 
Consider the horizontal--vertical decomposition, $\R^8\oplus\R\pr_z$.  For any oriented unit simple \emph{four}-plane $\Pi$ in $\R^8\oplus\R\pr_z$, $\Pi\cap(\R^8\oplus\{0\})$ has dimension at least three.  Choose an oriented orthonormal three-vector $\Pi_h$ in this intersection.  The remaining unit vector of $\Pi$ can then be written $\cos\theta\,v+\sin\theta\,\pr_z$, where $v$ is horizontal, unit, and orthogonal to $\Pi_h$.

For any fixed $\Pi_h$, it follows from \eqref{eq:h4-hodge-formula} that
\begin{align*}
    &\quad \max_{v,\theta}\bigl[(*_9\xi)\,(\Pi_h\wedge(\cos\theta\,v+\sin\theta\,\pr_z))\bigr]^2 \\
    &= \left|\iota_{\Pi_h}\left(4B\,e^\flat\wedge\alpha+\frac1h\beta\right)\right|^2 + \left[\frac\rho h\left(\iota_{\mathcal Je}\beta\right)(\Pi_h)+6\rho A\sin(2t)(e^\flat\wedge\varsigma)(\Pi_h)\right]^2 .
\end{align*}
By expanding $\mathcal Je $ by \eqref{eq:h4-screw-decomposition} and using the basic inequality $(c_0\cos(2t)+c_1\sin(2t))^2\leq c_0^2+c_1^2$, it suffices to show that
\begin{equation}\label{eq:h4-comass-reduction}
 \left|\iota_{\Pi_h}\left(4B\,e^\flat\wedge\alpha+\frac1h\beta\right)\right|^2+\frac{\rho^2}{h^2}(\iota_q\beta)(\Pi_h)^2+\left[-\frac\rho h(\iota_{Ie}\beta)(\Pi_h)+6\rho A(e^\flat\wedge\varsigma)(\Pi_h)\right]^2\leq1
\end{equation}
for every unit simple horizontal three-vector $\Pi_h$.

To proceed, note that $\Pi_h\cap e^\perp$ has dimension at least two.  Thus, we can choose\footnote{The notation $v$ is used in three different places of this proof; the $v$'s in different parts are unrelated.} $u,v,w\in e^\perp$ and $\Pi_h = (\cos\varphi\,e+\sin\varphi\,u)\wedge v\wedge w$ for some angle $\varphi$.  With this understood, we fix any orthonormal triple $u,v,w\in e^\perp$, and analyze the maximum of the left-hand side of \eqref{eq:h4-comass-reduction} over $\varphi$, or more precisely, over $\Pi_h(\varphi) = (\cos\varphi\,e+\sin\varphi\,u)\wedge v\wedge w$.

\textit{(e) Some inequalities.} 
Let
\[ \mathcal A=\alpha(v,w,\cdot) \quad\text{and}\quad \mathcal B=\beta(u,v,w,\cdot). \]
Since $\iota_e\alpha=0$ and $\iota_e\beta=0$, both annihilate $e$.

Since $\kappa$ has comass 1, $\bigl|\iota_{\Pi_h(\varphi)}\kappa\bigr|^2\leq 1$ for any $\varphi$.  With\footnote{The convention is $\iota_{X_1\wedge X_2\wedge X_3}\Phi=\Phi(X_1,X_2,X_3,\cdot)$.}
\[ \iota_{\Pi_h(\varphi)}\kappa = \cos\varphi\,\mathcal A + \sin\varphi\left(-\alpha(u,v,w)\,e^\flat+\mathcal B\right), \]
we find that
\begin{align} \label{mathcal-A-B-bound} \begin{split}
    |\mathcal A|\leq1,\qquad \alpha(u,v,w)^2+|\mathcal B|^2\leq1,\\
    \langle\mathcal A,\mathcal B\rangle^2\leq\left(1-|\mathcal A|^2\right)\left(1-\alpha(u,v,w)^2-|\mathcal B|^2\right).
\end{split} \end{align}

We need two further inequalities:
\begin{equation}\label{eq:h4-three-vector-bounds}
 |\varsigma(v,w)|\leq\frac23,\qquad \mathcal B(q)^2+\alpha(u,v,w)^2+3\mathcal B(Ie)^2\leq1.
\end{equation}
The first one follows from the definition of $\varsigma$, and the second one is precisely \eqref{eq:h4-fixed-estimate}, proved in Lemma~\ref{lem:h4-fixed} (applied to $u\wedge v\wedge w$).

\textit{(f) The estimate.} 
Direct contraction gives
\begin{align*}
    \iota_{\Pi_h(\varphi)}\left(4B\,e^\flat\wedge\alpha+\tfrac1h\beta\right)&=4B\cos\varphi\,\mathcal A+\sin\varphi\left(-4B\,\alpha(u,v,w)\,e^\flat+\tfrac1h\mathcal B\right),\\
    \tfrac\rho h\,(\iota_q\beta)(\Pi_h(\varphi))&=-\tfrac{\rho\sin\varphi}h\,\mathcal B(q),\\
    -\tfrac\rho h\,(\iota_{Ie}\beta)(\Pi_h(\varphi))+6\rho A\,(e^\flat\wedge\varsigma)(\Pi_h(\varphi))&=\tfrac{\rho\sin\varphi}h\,\mathcal B(Ie)+6\rho A\cos\varphi\,\varsigma(v,w).
\end{align*}

Consider the following map
\[      
    (\cos\varphi,\sin\varphi) \mapsto \left(\iota_{\Pi_h(\varphi)}\left(4B\,e^\flat\wedge\alpha+\tfrac1h\beta\right),\
    \tfrac\rho h(\iota_q\beta)(\Pi_h(\varphi)),\
    -\tfrac\rho h(\iota_{Ie}\beta)(\Pi_h(\varphi))+6\rho A\,(e^\flat\wedge\varsigma)(\Pi_h(\varphi))\right),
\]
which takes values in $\Lambda^1\R^8\oplus\R^2$.  Clearly, \eqref{eq:h4-comass-reduction} holds if and only if the squared norm of its value is at most $1$.  By regarding it as a quadratic form on $\R^2$, it has coefficients
\begin{align*}
    \mathsf{E} &= 16B^2|\mathcal A|^2+36\rho^2A^2\,\varsigma(v,w)^2,\\
    \mathsf{G} &= 16B^2\alpha(u,v,w)^2 + \frac1{h^2}|\mathcal B|^2 + \frac{\rho^2}{h^2}\left(\mathcal B(q)^2 + \mathcal B(Ie)^2\right),\\
    \mathsf{F} &= \frac{4B}h\langle\mathcal A,\mathcal B\rangle + \frac{6\rho^2A}h\,\mathcal B(Ie)\,\varsigma(v,w).
\end{align*}
Therefore, it remains to prove the following:
\begin{align*}
    \mathsf{E} \leq 1 , \quad \mathsf{G} \leq 1 , \quad\text{and }\; \mathsf{F}^2 \leq (1-\mathsf{E})(1-\mathsf{G}).
\end{align*}

For $\mathsf{E}\leq1$: according to \eqref{eq:h4-coefficient-inequalities}, \eqref{mathcal-A-B-bound} and \eqref{eq:h4-three-vector-bounds},
\begin{align*}
    1-\mathsf{E}
    &= 16B^2\left(1-|\mathcal A|^2\right) + \left(1-16B^2-24\rho^2A^2\right) + 36\rho^2A^2\left(\frac23-\varsigma(v,w)^2\right) \\
    &\geq 16B^2\left(1-|\mathcal A|^2\right) + 36\rho^2A^2\left(\frac23-\varsigma(v,w)^2\right) \geq 0.
\end{align*}
For $\mathsf{G}\leq1$: using $1=h^{-2}+\rho^2h^{-2}$, \eqref{eq:h4-coefficient-inequalities} (which gives $16B^2\leq1$) and \eqref{eq:h4-three-vector-bounds},
\begin{align*}
    1-\mathsf{G}
    &=\frac1{h^2}\left(1-\alpha(u,v,w)^2-|\mathcal B|^2\right)+\frac{\rho^2}{h^2}\left(1-\mathcal B(q)^2-\mathcal B(Ie)^2\right)\\
    &\qquad-\left(16B^2-\frac1{h^2}\right)\alpha(u,v,w)^2\\
    &\geq\frac1{h^2}\left(1-\alpha(u,v,w)^2-|\mathcal B|^2\right)+ \frac{\rho^2}{h^2}\left(1-\mathcal B(q)^2-\mathcal B(Ie)^2-\alpha(u,v,w)^2\right)\\
    &\geq\frac1{h^2}\left(1-\alpha(u,v,w)^2-|\mathcal B|^2\right)+\frac{2\rho^2}{h^2}\mathcal B(Ie)^2\geq0.
\end{align*}
Note that $|\varsigma(v,w)|\leq\frac23$ leads to $\varsigma(v,w)^2\leq2\left(\frac23-\varsigma(v,w)^2\right)$.  By the Cauchy--Schwarz inequality and \eqref{mathcal-A-B-bound}, 
\begin{align*}
    |\mathsf{F}|&\leq\frac{4|B|}{h}\sqrt{(1-|\mathcal A|^2)(1-\alpha(u,v,w)^2-|\mathcal B|^2)}+\frac{6\rho^2|A|}{h}\left|\mathcal B(Ie)\,\varsigma(v,w)\right|\\
    &\leq\sqrt{16B^2(1-|\mathcal A|^2)}\sqrt{\frac1{h^2}(1-\alpha(u,v,w)^2-|\mathcal B|^2)}\\
    &\qquad+\sqrt{36\rho^2A^2\left(\frac23-\varsigma(v,w)^2\right)}\sqrt{\frac{2\rho^2}{h^2}\mathcal B(Ie)^2}\\
    &\leq\left[16B^2(1-|\mathcal A|^2)+36\rho^2A^2\left(\frac23-\varsigma(v,w)^2\right)\right]^{1/2}\\
    &\qquad\times\left[\frac1{h^2}(1-\alpha(u,v,w)^2-|\mathcal B|^2)+\frac{2\rho^2}{h^2}\mathcal B(Ie)^2\right]^{1/2}\\
    &\leq\sqrt{1-\mathsf{E}}\,\sqrt{1-\mathsf{G}}.
\end{align*}
Hence, $\xi=d\Psi$ is smooth and closed, $\|\xi\|_*\leq1$, and $\left.\xi\right|_{H_4}=d\mu_{H_4}$, so $\xi$ calibrates $H_4$. Consequently, $H_4$ is area-minimizing.
\end{proof}

The lemma below establishes an inequality claimed in Step 3 (e).

\begin{lemma}\label{lem:h4-fixed}
Let $e\in\R^8$ be a unit vector and write the $4$-form $\kappa$ in \eqref{eq:Kraines-4-form} as
\[
 \kappa=e^\flat\wedge\alpha+\beta,\qquad \alpha=\iota_e\kappa,\qquad \iota_e\beta=0.
\]
Let $I$ be any quaternionic complex structure and let $q$ be any unit vector orthogonal to the quaternionic line $\mathbb He$. Then, for every unit simple three-vector $\eta\in\Lambda^3e^\perp$,
\begin{equation}\label{eq:h4-fixed-estimate}
 (\iota_q\beta)(\eta)^2+\alpha(\eta)^2+3(\iota_{Ie}\beta)(\eta)^2\leq1.
\end{equation}
\end{lemma}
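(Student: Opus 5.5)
The strategy is to turn \eqref{eq:h4-fixed-estimate} into a statement about a single covector and then finish with an explicit comass computation in the adapted frame of Step~3(a). Since $q$, $Ie$ and every vector of $\eta$ lie in $e^\perp$, the $e^\flat\wedge\alpha$ part of $\kappa$ does not contribute. Hence
\[
\alpha(\eta)=\kappa(e\wedge\eta),\qquad (\iota_q\beta)(\eta)=\kappa(q\wedge\eta),\qquad (\iota_{Ie}\beta)(\eta)=\kappa(Ie\wedge\eta).
\]
Setting $\psi=\iota_\eta\kappa\in\Lambda^1\R^8$, the left side of \eqref{eq:h4-fixed-estimate} is $\psi(e)^2+\psi(q)^2+3\psi(Ie)^2$. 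The unit vectors $e,q,Ie$ are orthonormal, and comass one gives $|\psi|\le1$. So the first two terms plus $\psi(Ie)^2$ are bounded trivially. The entire content of the lemma is the extra $2\psi(Ie)^2$, which must come from the quaternionic structure of $\kappa$ and the constraint $\eta\perp e$.

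First I normalize. The form $\kappa$ is invariant under $Sp(2)Sp(1)$, and this group acts transitively on the data (unit $e$, a complex structure $I$ in the $S^2$-family, a unit $q\perp\mathbb He$). So I may work in the frame $e_0=e$, $e_a=I_ae$, $f_0=q$, $f_a=I_aq$ of Step~3(a), with the explicit formulas for $\alpha$ and $\beta$ recorded there. From them I read off three explicit $3$-forms on $e^\perp=\langle e_1,e_2,e_3,f_0,\dots,f_3\rangle$: $\alpha$, $\iota_q\beta=f^{123}+\frac13\sum_i\varepsilon_i\wedge f^i$, and $\iota_{e_1}\beta$. Next I dualize. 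By Cauchy--Schwarz, \eqref{eq:h4-fixed-estimate} for all unit simple $\eta$ is equivalent to the following: for every $(c_1,c_2,c_3)$ with $c_1^2+c_2^2+c_3^2=1$, the $3$-form
\[
\Phi_c=c_1\,\alpha+c_2\,\iota_q\beta+\sqrt3\,c_3\,\iota_{Ie}\beta
\]
restricted to $e^\perp$ has comass at most one.

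For $c_3=0$ this is immediate, because $\Phi_c=\iota_{c_1e+c_2q}\kappa$ restricted to $e^\perp$, and $\kappa$ has comass one. For general $c$, I would use the residual symmetry fixing $e$, $I$ and $q$ to reduce the test $3$-planes. This symmetry contains the circle generated by $I$ acting on the right of both quaternionic lines, together with rotations of the pair $(I_2,I_3)$. Having reduced, I would compute $\Phi_c(\eta)$ on a parametrized family and verify the bound directly. An alternative route is to write $\sqrt3\,\iota_{Ie}\beta$ as an average of contractions $\iota_X\kappa'$, where $\kappa'$ are copies of $\kappa$ transformed by elements of $Sp(2)Sp(1)$ that fix $e$; the factor $3$ would reflect the three-fold symmetry among $\omega_1,\omega_2,\omega_3$.

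The main obstacle is exactly this mixed comass estimate once $c_3\ne0$. The bound $|\psi|\le1$ is too weak, and one must show that $\psi(Ie)$ can be large only when $\psi$ is nearly orthogonal to $e$ and $q$. Concretely, the key lemma I would aim for is
\[
|\psi(Ie)|\le\tfrac{1}{\sqrt3}\sqrt{1-\psi(e)^2-\psi(q)^2}\qquad\text{for } \eta\in\Lambda^3e^\perp,
\]
which I expect follows from the coefficients $\frac13$ in $\iota_{Ie}\beta$ after splitting $\eta$ along $L\cap e^\perp=\langle e_1,e_2,e_3\rangle$ and $L^\perp$.
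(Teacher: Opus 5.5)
Your preliminary reductions are sound. With $\psi=\iota_\eta\kappa$ the left side of \eqref{eq:h4-fixed-estimate} equals $\psi(e)^2+\psi(q)^2+3\psi(Ie)^2$. The $Sp(2)Sp(1)$-normalization is legitimate, and the dual reformulation through $\Phi_c$ is equivalent to the lemma. However, the proposal stops exactly where the lemma's content begins.

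\begin{itemize}
\item Your ``key lemma'' $|\psi(Ie)|\le 3^{-1/2}\sqrt{1-\psi(e)^2-\psi(q)^2}$ becomes, after squaring, literally \eqref{eq:h4-fixed-estimate}. So nothing has been reduced.
\item The symmetry reduction buys little. The subgroup of $Sp(2)Sp(1)$ fixing $e$, $I$ and $q$ is a single circle: it rotates $(e_2,e_3)$ and $(f_2,f_3)$ simultaneously, and right multiplication by unit complex numbers does not fix $e$ on its own. So ``verify directly'' still means maximizing over an eleven-dimensional family of $3$-planes in $e^\perp$, which you do not carry out.
\item The alternative route cannot work as stated. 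Every $Sp(2)Sp(1)$-translate of $\kappa$ is $\kappa$ itself, so any average $\sum_j t_j\,\iota_{X_j}\kappa$ with $|X_j|\le1$ is just $\iota_{\bar X}\kappa$ with $|\bar X|\le 1$. On $\Lambda^3e^\perp$, however, $\sqrt3\,\iota_{Ie}\beta=\iota_{\sqrt3\,Ie}\kappa$ is a contraction against a vector of length $\sqrt3$. Such averages therefore never beat the trivial bound $|\psi|\le1$; the factor $3$ must come from comass-one forms that are \emph{not} $Sp(2)Sp(1)$-invariant.
\end{itemize}

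The missing idea is a decomposition of $\kappa$ into such forms, one of which carries the entire $Ie$-component with weight $3$. The paper takes a hyperk\"ahler triple $I_1=I,I_2,I_3$ and the Cayley forms $\Phi_i=\omega_{I_i}^2-3\kappa$. These have comass one and satisfy $\Phi_1+\Phi_2+\Phi_3=-3\kappa$.

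Set $\gamma_i=\iota_\eta\Phi_i$. Then $|\gamma_i|\le1$ and $\psi=-\frac13\sum_i\gamma_i$, so Cauchy--Schwarz gives $\psi(e)^2\le\frac13\sum_i\gamma_i(e)^2$ and $\psi(q)^2\le\frac13\sum_i\gamma_i(q)^2$.

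The decisive observation is that $\iota_{Ie}\omega_I^2=2(\iota_{Ie}\omega_I)\wedge\omega_I=-2e^\flat\wedge\omega_I$ annihilates $\Lambda^3e^\perp$. Hence $\gamma_1(Ie)=-3\psi(Ie)$, that is, $3\psi(Ie)^2=\frac13\gamma_1(Ie)^2$.

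Since $e,q,Ie$ are orthonormal, $\gamma_i(e)^2+\gamma_i(q)^2+\gamma_i(Ie)^2\le|\gamma_i|^2$ for each $i$. Summing bounds the left side of \eqref{eq:h4-fixed-estimate} by $\frac13\sum_i|\gamma_i|^2\le1$.

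This is what makes your heuristic precise. A large $\psi(Ie)$ appears tripled inside the single unit-bounded covector $\gamma_1$, and so it consumes the same budget $\frac13\sum_i|\gamma_i|^2\le1$ that also controls $\psi(e)^2+\psi(q)^2$.
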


\begin{proof}
Complete $I$ to a hyperk\"ahler triple $I,J,K$, with K\"ahler forms $\omega_I,\omega_J,\omega_K$, and set
\[
 \Phi_1=\omega_I^2-3\kappa,\qquad \Phi_2=\omega_J^2-3\kappa,\qquad \Phi_3=\omega_K^2-3\kappa .
\]
These are Cayley calibrations, and $\omega_I^2+\omega_J^2+\omega_K^2=6\kappa$ gives $\Phi_1+\Phi_2+\Phi_3=-3\kappa$. Writing $\gamma_i=\iota_\eta\Phi_i$ and contracting this identity with $\eta$,
\[
 \frac13\sum_{i=1}^3\gamma_i(X)=-\kappa(\eta,X)=\kappa(X,\eta)\qquad\text{for every }X\in\R^8 .
\]
Since $\eta\in\Lambda^3e^\perp$, the decomposition $\kappa=e^\flat\wedge\alpha+\beta$ gives $\kappa(e,\eta)=\alpha(\eta)$, $\kappa(q,\eta)=(\iota_q\beta)(\eta)$ and $\kappa(Ie,\eta)=(\iota_{Ie}\beta)(\eta)$; taking $X=e$ and $X=q$ above therefore yields
\[
 \frac13\sum_{i=1}^3\gamma_i(e)=\alpha(\eta),\qquad \frac13\sum_{i=1}^3\gamma_i(q)=(\iota_q\beta)(\eta).
\]
For the third quantity only $\Phi_1$ is needed: since $\iota_{Ie}\omega_I=-e^\flat$, one has $\iota_{Ie}\omega_I^2=2\left(\iota_{Ie}\omega_I\right)\wedge\omega_I=-2\,e^\flat\wedge\omega_I$, which annihilates $\Lambda^3e^\perp$, so $\omega_I^2(\eta,Ie)=0$ and hence
\[
 \gamma_1(Ie)=\Phi_1(\eta,Ie)=-3\kappa(\eta,Ie)=3(\iota_{Ie}\beta)(\eta).
\]

Each $\Phi_i$ has comass one and $\eta$ is a unit simple three-vector, so $|\gamma_i|\leq1$. By Cauchy--Schwarz, $\alpha(\eta)^2\leq\frac13\sum_i\gamma_i(e)^2$ and $(\iota_q\beta)(\eta)^2\leq\frac13\sum_i\gamma_i(q)^2$, while $e$, $q$ and $Ie$ are orthonormal, so $\gamma_i(e)^2+\gamma_i(q)^2+\gamma_i(Ie)^2\leq|\gamma_i|^2$ for each $i$. Therefore,
\[
 (\iota_q\beta)(\eta)^2+\alpha(\eta)^2+3(\iota_{Ie}\beta)(\eta)^2
 \leq\frac13\sum_{i=1}^3\left(\gamma_i(q)^2+\gamma_i(e)^2\right)+\frac13\gamma_1(Ie)^2
 \leq\frac13\sum_{i=1}^3|\gamma_i|^2\leq1 .
\]
This finishes the proof of this lemma.
\end{proof}

\section{Construction of competitors for \texorpdfstring{$H_k$}{Hk} with odd \texorpdfstring{$k$}{k}}\label{sec:competitor-Hk-odd}

We write $(x,y,z)$ for the coordinates on $\R^k\times\R^k\times\R$.  By \eqref{eq:helicoid-parametrization}, $H_k$ is parametrized by $ F_k(u, s)=(\cos s\, u, \sin s \, u, s)$, where $u\in \R^k$ and $s\in \R$. 

\begin{theorem}\label{thm:odd-nonminimizing}
If $k$ is odd, then $H_k$ is not area-minimizing.
\end{theorem}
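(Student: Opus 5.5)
The plan is to exhibit, for odd $k$, a compact piece of $H_k$ together with a competitor that has the same boundary and strictly smaller area, and to locate the mechanism in orientations. Parametrize $H_k$ by $F_k(u,s)=(\cos s\,u,\sin s\,u,s)$ and consider the half-turn piece
\[
P_R=F_k\bigl(\{|u|\le R\}\times[0,\pi]\bigr).
\]
Its boundary consists of two horizontal $k$-disks at heights $z=0$ and $z=\pi$, together with the helical side $\Sigma_R=F_k(\{|u|=R\}\times[0,\pi])$. The disk at height $z=\pi$ is parametrized by $u\mapsto(-u,0,\pi)$.

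The antipodal map of $\R^k$ has determinant $(-1)^k$. So for odd $k$, the two disks enter $\partial P_R$ with the \emph{same} orientation after vertical translation, instead of cancelling. This is the same parity phenomenon that made $\Theta$ fail to descend in Section~\ref{sec:calibration-Hk-even}. Equivalently, the link $S^{k-1}\times S^1/\mathbb Z_2$ of the projected cone $C(2,k,1)$ is non-orientable when $k$ is odd. I would exploit this non-orientability to build a cheaper filling that "unwinds" the half-turn through the singular set $Z=Z_0\times\R$. In the adapted coordinates $(\rho,t,s,w,v,z)$ of Section~\ref{sec:calibration-coordinates}, the competitor will be an oriented $(k+1)$-chain $M_R$ with $\partial M_R=\partial P_R$, built so that it leaves $\{t=0\}$ and reaches $\{t=\pi/4\}=Z$ in the interior.

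Concretely, I would take a graph $t=\tau(\rho,s)$ over the parameter domain, with the following properties.
\begin{itemize}
\item $\tau=0$ near $\rho=R$ and near $s\in\{0,\pi\}$, so the boundary is unchanged.
\item $\tau$ is close to $\pi/4$ on a large interior region.
\end{itemize}
On $Z$ the choice of $(w,v)$ degenerates to a $2$-plane. For odd $k$, the $(k-1)$-sphere factor $w$ can then be rotated into $v$ consistently with orientation, and this is exactly what $\Theta$ obstructs when $k$ is even. The area of $M_R$ is computed from the metric \eqref{eq:adapted-metric}. The $\Theta$-part has weight $\rho^{k-1}J(t)$, with $J=(\cos t)^{k-2}\cos(2t)$, and this weight decays as $t\to\pi/4$. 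I would compare $\Vol(M_R)$ with
\[
\Vol(P_R)=\pi\int_0^R\rho^{k-1}\sqrt{1+\rho^2}\,d\rho\cdot|S^{k-1}|,
\]
aiming for a saving of order $R^{k+1}$ that beats the transition-layer cost. The transition layer near $\rho=R$ has width $O(1)$ and contributes only $O(R^k)$. Scaling suggests first doing the comparison for the cone $C(2,k,1)\times\R$ limit, where $z$ plays no role. There one needs a non-orientable-link competitor whose area is smaller per unit of $\rho^{k}\,d\rho$.

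The main obstacle is the construction of this interior sweep and a sharp area comparison for it. Near $Z$ the $w$-sphere collapses and the orientation bookkeeping is delicate. My first attempt would be an explicit family in which the $2\times k$ matrix $\begin{pmatrix}x\\y\end{pmatrix}$ is deformed from rank one toward a scaled partial isometry, $\sigma_1=\sigma_2$, along the half-turn. I would then verify numerically and symbolically that the resulting one-variable area integrand in $t$ is strictly below $\sqrt{1+\rho^2}$ for large $\rho$. Finally, I would check that $M_R-P_R$ is a boundary, which is where oddness of $k$ enters, so that $M_R$ is a legitimate competitor. This would show that $H_k$ is not area-minimizing.
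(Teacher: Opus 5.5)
There is a genuine gap. The competitor $M_R$, the profile $\tau$, the area comparison, and the check that $M_R-P_R$ bounds are all deferred (``I would verify numerically\ldots''), and these are the entire content of the theorem. Moreover, the scheme cannot work in the form you propose, for two reasons.

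First, your graph ansatz is obstructed exactly when $k$ is odd. At a point with $0<t<\pi/4$, the second singular vector $v\perp w$ is determined by the point together with $(s,w)$. So a graph $t=\tau(\rho,s)$ over $(\rho,s,w)$ requires, at every $(\rho,s)$ with $0<\tau<\pi/4$, a continuous unit tangent field $w\mapsto v(w)$ on $S^{k-1}$. For odd $k$ the sphere $S^{k-1}$ is even-dimensional, so no such field exists. (For $k=1$ there is no $v$ at all, and $Z$ is just the axis.) Your parity heuristic is therefore reversed.

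Second, a half-turn cannot in general be beaten at order $R^{k+1}$. Suppose $\partial M=\partial[\![P_R]\!]$. Push $M$ forward by the $1$-Lipschitz projection $(x,y,z)\mapsto(x,y)$ and reduce mod $2$. The doubled disk $2[\![B_R\times\{0\}]\!]$ in the projected boundary disappears, leaving a mod-$2$ filling of $C(2,k,1)\cap\partial B_R$. Meanwhile $\Vol(P_R)=\Vol(C(2,k,1)\cap B_R)+O(R^{k-1})$, or $+O(\log R)$ when $k=1$. So a saving of order $R^{k+1}$ would in particular show that the cone $C(2,k,1)$ is not area-minimizing mod $2$. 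This is false for $k=1$: the cone is a plane, and by the constancy theorem every filling of $\partial P_R$ has area at least $\pi R^2$. For odd $k\geq5$ it runs against the Kerckhove--Lawlor minimality of $C(2,k,1)$ cited in Section~\ref{sec:calibration-Hk-even}.

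The missing idea is to apply your parity observation to a full turn and to sweep vertically, instead of deforming toward $Z$. Since $F_k(u,s)+\pi e_z=F_k(-u,s+\pi)$ and $\det(-I_k)=-1$ for odd $k$, translation by $\pi e_z$ carries $F_k(B_R\times[0,\pi])$ onto the next half-turn $F_k(B_R\times[\pi,2\pi])$ with the opposite $H_k$-orientation. Hence the sweep $G(u,s,t)=(\cos s\,u,\sin s\,u,s+t)$ on $Q_R=B_R\times[0,\pi]^2$ satisfies $\partial G_\#[\![Q_R]\!]=-[\![P^{\mathrm{full}}_R]\!]+[\![K_R]\!]$. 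Here $P^{\mathrm{full}}_R=F_k(B_R\times[0,2\pi])$, and $K_R$ is swept out by the boundary of the first half-turn. It consists of the slab $B_R\times\{0\}\times[0,2\pi]$ and a band on $\{|(x,y)|=R\}$, with total area $(\pi^2+2\pi/k)\Vol(S^{k-1})R^k$. By contrast, $\Vol(P^{\mathrm{full}}_R)\geq\frac{2\pi}{k+1}\Vol(S^{k-1})R^{k+1}$, so large $R$ gives a cheaper competitor with no analysis near $Z$ at all.
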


\subsection{The oriented replacement}\label{sec:odd-replacement}

Orient $H_k$ by the coordinates $(u_1,\ldots,u_k,s)$. In polar coordinates $u=\rho w$, the induced metric and volume form are
\begin{equation}\label{eq:odd-helicoid-geometry}
 g_{H_k}=d\rho^2+(1+\rho^2)ds^2+\rho^2|dw|^2,
 \qquad
 d\mu_{H_k}=\rho^{k-1}\sqrt{1+\rho^2}\,d\rho\,d\mu_{S^{k-1}}\,ds.
\end{equation}

Fix $R>0$ and write $B_R=\{u\in\R^k:|u|\leq R\}$. Consider the following portion of $H_k$:
\begin{equation}\label{eq:PR}
P_R=\{F_k(u,s):u\in B_R,\ 0\leq s\leq2\pi\}.    
\end{equation}
The last coordinate makes $F_k$ injective on $B_R\times[0,2\pi]$. By \eqref{eq:odd-helicoid-geometry},
\begin{equation}\label{eq:removed-volume}
 \Vol(P_R)=2\pi\Vol(S^{k-1})\int_0^R\rho^{k-1}\sqrt{1+\rho^2}\,d\rho.
\end{equation}
Here $\Vol$ denotes Hausdorff measure in the dimension of the set under consideration, and $\Vol(S^0)=2$.

Let $Q_R=B_R\times[0,\pi]^2\subset\R^{k+2}$, with coordinates $(u,s,t)$, and define
\[
 G:Q_R\longrightarrow\R^{2k+1},\qquad
 G(u,s,t)=(\cos s\,u,\sin s\,u,s+t).
\]
The $(k+2)$-dimensional Jacobian $J_{k+2}G$ measures the local change of $(k+2)$-dimensional volume from $Q_R$ to its image $G(Q_R)\subset\R^{2k+1}$.
Away from $u=0$, the map $G$ is injective except for the codimension-two identification $(u,0,\pi)\sim(-u,\pi,0)$. At $u=0$, its rank drops to $k+1$. Indeed,
\[
 G_{u_j}=(\cos s\,e_j,\sin s\,e_j,0),\quad
 G_s=(-\sin s\,u,\cos s\,u,1),\quad G_t=(0,0,1).
\]
The vectors $G_{u_1},\ldots,G_{u_k},G_s-G_t,G_t$ are mutually orthogonal, with lengths $1,\ldots,1,|u|,1$. Hence
\[
 J_{k+2}G
 =|G_{u_1}\wedge\cdots\wedge G_{u_k}\wedge(G_s-G_t)\wedge G_t|
 =|u|.
\]
The set $u=0$ maps under $G$ to the vertical axis and contributes no $(k+2)$-dimensional mass. The images of the faces $t=0,\pi$ under $G$ together form $P_R$:
\[
 G(u,s,0)=F_k(u,s),\qquad G(u,s,\pi)=F_k(-u,s+\pi).
\]
Under $G$, the faces $s=0$ and $s=\pi$ together map onto the cylinder
\[
 C_R=\{(u,0,z):u\in B_R,\ 0\leq z\leq2\pi\},
\]
and the face $|u|=R$ maps onto
\[
 L_R=\{(R\cos s\,w,R\sin s\,w,z):0\leq s\leq\pi,\ s\leq z\leq s+\pi,
 \ w\in S^{k-1}\}.
\]

To determine the orientations of the two helicoid faces, write $(v,\sigma)$ for the coordinates on the target helicoid,
\[
 F_k(v,\sigma)=(\cos\sigma\,v,\sin\sigma\,v,\sigma).
\]
Orient $Q_R$ by
\[
 \Omega=dt\wedge du_1\wedge\cdots\wedge du_k\wedge ds,
 \quad\text{and denote}\quad
 \omega=du_1\wedge\cdots\wedge du_k\wedge ds.
\]
Using the outward-normal-first convention on $Q_R$'s boundary, the boundary orientations are $-\omega$ on $t=0$ and $+\omega$ on $t=\pi$. Thus the $t=0$ face carries the negative of the $H_k$ orientation. On $t=\pi$, let $\Phi(u,s)=(-u,s+\pi)$. Then $G(u,s,\pi)=F_k(\Phi(u,s))$ and
\[
 \Phi^*(dv_1\wedge\cdots\wedge dv_k\wedge d\sigma)=(-1)^k\omega.
\]
Since the boundary form there is $+\omega$, the $t=\pi$ face carries $(-1)^k$ times the $H_k$ orientation. Hence, when $k$ is odd, both helicoid faces carry the negative of the $H_k$ orientation.

In the following, we write $[\![A]\!]$ for the current of integration over an oriented set $A$, and use the standard notations $\partial$ and $\Mass$ for boundary and mass in geometric measure theory; see \cite[Chapter 6]{SimonGMT}.  Define $T_R=G_{\#}([\![Q_R]\!])$, where $G_{\#}$ denotes the pushforward of currents under $G$. Since $G$ is smooth on $Q_R$, $\partial T_R=G_{\#}(\partial[\![Q_R]\!])$, with no boundary term from $u=0$. For odd $k$, the preceding computation gives
\begin{equation}\label{eq:boundary}
 \partial T_R=-[\![P_R]\!]+[\![K_R]\!],
 \qquad
 K_R=C_R\cup L_R,
\end{equation}
where the pieces of $K_R$ carry the orientations induced from $Q_R$. Thus
\begin{equation}\label{eq:odd-competitor}
 [\![H_k]\!]+\partial T_R=[\![H_k]\!]-[\![P_R]\!]+[\![K_R]\!]
\end{equation}
is an integral-current competitor agreeing with $[\![H_k]\!]$ outside a compact set.

\subsection{The area comparison}\label{sec:odd-area-comparison}

The cylinder over $B_R$ has volume
\begin{equation}\label{eq:cylinder-volume}
 \Vol(C_R)=2\pi\Vol(S^{k-1})\int_0^R\rho^{k-1}\,d\rho
 =\frac{2\pi}{k}\Vol(S^{k-1})R^k.
\end{equation}
On $L_R$, use $(s,w,z)$ as coordinates. A unit tangent vector to $S^{k-1}$ is scaled by $R$, while the coordinate vectors in the $s$ and $z$ directions have lengths $R$ and $1$; these directions are mutually orthogonal. Therefore
\begin{equation}\label{eq:lateral-volume}
 \Vol(L_R)=\Vol(S^{k-1})\int_0^\pi\int_s^{s+\pi}R^k\,dz\,ds
 =\pi^2\Vol(S^{k-1})R^k.
\end{equation}
The inserted pieces intersect one another only in sets of zero $(k+1)$-dimensional measure: $C_R\cap L_R$ is $k$-dimensional. Hence, combining \eqref{eq:cylinder-volume} and \eqref{eq:lateral-volume}, we obtain
\begin{align}
 \Vol(K_R)
 &=\Vol(S^{k-1})\left(\pi^2+\frac{2\pi}{k}\right)R^k.
 \label{eq:inserted-volume}
\end{align}

The inserted pieces also meet the retained part of $H_k$ only in sets of zero $(k+1)$-dimensional measure. Indeed, $C_R\subset\{y=0\}$, while $H_k\cap\{y=0\}$ is contained in the axis together with the slices $s\in\pi\mathbb{Z}$ and is at most $k$-dimensional. Also,
\[
 H_k\cap\{|(x,y)|=R\}=F_k(RS^{k-1}\times\R)
\]
is $k$-dimensional, so the same holds for $L_R\cap H_k$. Consequently, there is no overlap or cancellation in the mass comparison.

By \eqref{eq:removed-volume} and $\sqrt{1+\rho^2}\geq\rho$, for $R>0$ we have
\begin{equation}\label{eq:removed-lower}
 \Vol(P_R)\geq\frac{2\pi}{k+1}\Vol(S^{k-1})R^{k+1}.
\end{equation}
Comparing \eqref{eq:removed-lower} and \eqref{eq:inserted-volume}, if
\begin{equation}\label{eq:R-threshold}
 R>(k+1)\left(\frac{\pi}{2}+\frac1k\right),
\end{equation}
then
\begin{equation}\label{eq:strict-volume-comparison}
 \Vol(K_R)<\Vol(P_R).
\end{equation}

\begin{proof}[Proof of Theorem~\ref{thm:odd-nonminimizing}]
Choose $R$ as in \eqref{eq:R-threshold} and set $S_R=\partial T_R$. This is a compactly supported integral $(k+1)$-cycle. For every ball $B$ containing its support, \eqref{eq:boundary}, \eqref{eq:odd-competitor}, the preceding observations on the intersections, and \eqref{eq:strict-volume-comparison} give
\[
 \Mass(([\![H_k]\!]+S_R)\mathbin{\llcorner}B)
 -\Mass([\![H_k]\!]\mathbin{\llcorner}B)
 =\Vol(K_R)-\Vol(P_R)<0.
\]
Thus $H_k$ is not area-minimizing. No smoothing is required, since piecewise-smooth integral currents are admissible competitors.
\end{proof}

\begin{remark}
The construction is especially easy to visualize when $k=1$; see Figures~\ref{fig:H1-3dpicture} and \ref{fig:H1-competitor}. The removed portion lies in the solid cylinder
\[
\{(x,y,z):\sqrt{x^2+y^2}\leq R,\ 0\leq z\leq2\pi\}.
\]
The replacement $K_R=C_R\cup L_R$ consists of three connected pieces: one rectangle $C_R$ and two bands on the cylinder
\[
\{(x,y,z):\sqrt{x^2+y^2}=R,\ 0\leq z\leq2\pi\}.
\]
Writing $w\in S^0=\{+1,-1\}$ and $\theta$ for the polar angle in the $(x,y)$-plane, one has $\theta=s$ on the component $w=+1$ and $\theta=s+\pi$ on the component $w=-1$. Thus the two bands in $L_R$ are
\[
\{0\leq\theta\leq\pi,\ \theta\leq z\leq\theta+\pi\}
\cup
\{\pi\leq\theta\leq2\pi,\ \theta-\pi\leq z\leq\theta\}.
\]
As $R\to\infty$, $\Vol(P_R)\sim2\pi R^2$, whereas
\[
\Vol(K_R)=4\pi R+2\pi^2R=O(R).
\]
\end{remark}

\begin{figure}[ht]
\centering
\begin{tikzpicture}[
    x={(0.24cm,-0.10cm)},y={(0.24cm,0.10cm)},z={(0cm,0.44cm)},
    line cap=round,line join=round,every node/.style={font=\small}
]
\begin{scope}
\foreach \a in {0,15,...,360}{
  \draw[gray!65,thin]
  ({-1.5*cos(\a)},{-1.5*sin(\a)},{pi*\a/180})--
  ({1.5*cos(\a)},{1.5*sin(\a)},{pi*\a/180});
}
\foreach \r in {-1.5,-0.75,0.75,1.5}{
  \draw[gray!75,thin] plot[domain=0:360,samples=121,variable=\a]
  ({\r*cos(\a)},{\r*sin(\a)},{pi*\a/180});
}
\draw (0,0,0)--(0,0,{2*pi});
\draw[thick] (-1.5,0,0)--(1.5,0,0);
\draw[thick] (-1.5,0,{pi})--(1.5,0,{pi});
\draw[thick] (-1.5,0,{2*pi})--(1.5,0,{2*pi});
\foreach \h/\label in {0/0,pi/\pi,2*pi/2\pi}{
  \node[right,inner sep=1pt,xshift=7pt] at (1.5,0,{\h}) {$\label$};
}
\node[below] at (0,0,-2.5) {$R=3/2$};
\end{scope}
\begin{scope}[shift={(4.6cm,0cm)}]
\foreach \a in {0,15,...,360}{
  \draw[gray!65,thin]
  ({-8*cos(\a)},{-8*sin(\a)},{pi*\a/180})--
  ({8*cos(\a)},{8*sin(\a)},{pi*\a/180});
}
\foreach \r in {-8,-6,-4,-2,2,4,6,8}{
  \draw[gray!75,thin] plot[domain=0:360,samples=121,variable=\a]
  ({\r*cos(\a)},{\r*sin(\a)},{pi*\a/180});
}
\draw (0,0,0)--(0,0,{2*pi});
\draw[thick] (-8,0,0)--(8,0,0);
\draw[thick] (-8,0,{pi})--(8,0,{pi});
\draw[thick] (-8,0,{2*pi})--(8,0,{2*pi});
\foreach \h/\label in {0/0,pi/\pi,2*pi/2\pi}{
  \node[right,fill=white,inner sep=1pt,xshift=3pt] at (0,0,{\h}) {$\label$};
}
\node[below] at (0,0,-2.5) {$R=8$};
\end{scope}
\end{tikzpicture}
\caption{Wireframe views of $P_R$ in \eqref{eq:PR} for $k=1$, with $R=3/2$ and $R=8$ at the same scale. The vertical axis is interior for $0<z<2\pi$.}
\label{fig:H1-3dpicture}
\end{figure}

\begin{figure}[ht]
\centering
\begin{tikzpicture}[
    scale=0.83,line cap=round,line join=round,
    every node/.style={font=\scriptsize},
    glue/.style={circle,draw,fill=white,inner sep=1pt,font=\scriptsize}
]
\begin{scope}
\fill[gray!22] (-2.3,0) rectangle (2.3,4);
\draw[->] (-2.65,0)--(2.9,0) node[right] {$x$};
\draw[->] (0,-0.25)--(0,4.5) node[above] {$z$};
\draw[thick] (-2.3,0)--(-2.3,4)--(2.3,4)--(2.3,0);
\draw[densely dashed,gray] (-2.3,2)--(2.3,2);
\node[below] at (-2.3,0) {$-R$};
\node[below] at (0,0) {$0$};
\node[below] at (2.3,0) {$R$};
\node[left,fill=gray!22,inner sep=1pt] at (0,4) {$2\pi$};
\node[left,fill=gray!22,inner sep=1pt] at (0,2) {$\pi$};
\node[glue] at (2.3,1) {$1$};
\node[glue] at (2.3,3) {$2$};
\node[glue] at (-2.3,1) {$3$};
\node[glue] at (-2.3,3) {$4$};
\node[font=\small] at (0,-0.8) {(a) the rectangle $C_R$};
\end{scope}
\begin{scope}[shift={(5.1,0)}]
\fill[gray!22] (0,0)--(2,2)--(2,4)--(0,2)--cycle;
\fill[gray!22] (2,0)--(4,2)--(4,4)--(2,2)--cycle;
\draw[->] (-0.25,0)--(4.5,0) node[right] {$\theta$};
\draw[->] (0,-0.25)--(0,4.5) node[above] {$z$};
\draw[thick] (0,0)--(2,2)--(4,4);
\draw[thick] (0,2)--(2,4);
\draw[thick] (2,0)--(4,2);
\draw (2,0)--(2,4);
\draw[densely dashed,gray] (4,0)--(4,2);
\draw (4,2)--(4,4);
\node[below] at (0,0) {$0$};
\node[below] at (2,0) {$\pi$};
\node[below] at (4,0) {$2\pi$};
\node[left] at (0,2) {$\pi$};
\node[left] at (0,4) {$2\pi$};
\node[rotate=45,fill=white,inner sep=1pt] at (1,1) {$z=\theta$};
\node[rotate=45,fill=white,inner sep=1pt] at (1,3) {$z=\theta+\pi$};
\node[rotate=45,fill=white,inner sep=1pt] at (3,1) {$z=\theta-\pi$};
\node[glue] at (0,1) {$1$};
\node[glue] at (4,3) {$2$};
\node[glue] at (2,1) {$3$};
\node[glue] at (2,3) {$4$};
\node[font=\small] at (2,-0.8) {(b) the cylinder, $\rho=R$};
\end{scope}
\end{tikzpicture}
\caption{The replacement $K_R=C_R\cup L_R$ for $k=1$: (a) the rectangle; (b) the two bands on the cylinder, with $\theta=0$ and $\theta=2\pi$ identified. Matching circled numbers mark glued boundary segments. The dashed line in (a) is the interior seam $z=\pi$.}
\label{fig:H1-competitor}
\end{figure}

\begin{remark}
For even $k$, one has $(-1)^k=+1$, so the two helicoid faces occur in $\partial T_R$ with opposite $H_k$ orientations; the construction is therefore not an integral comparison. After reducing coefficients modulo $2$, the signs disappear, and the same construction shows that $H_k$ is not area-minimizing modulo $2$ for any $k$. The parity is also visible under the orthogonal projection $\R^{2k+1}\to\R^{2k}$ forgetting $z$, which identifies $(u,s)$ with $(-u,s+\pi)$ and maps the two consecutive turns to the same rank-one cone.
\end{remark}

\begin{remark}
In \cite{LawlorDirectedSlicing}, Lawlor's directed-slicing argument shows that the portion of the helicoid
\[
\{F_1(u,s):0\leq u\leq R,\ 0\leq s\leq6\pi\}
\]
is area-minimizing among oriented competitors with the same boundary. Our choice of the competitor $P_R$ in \eqref{eq:PR} is inspired by Lawlor's work. In Lawlor's example, the axis $u=0$ is part of the boundary. In contrast, it is interior to $P_R=F_1([-R,R]\times[0,2\pi])$ for $0<s<2\pi$. Thus these are two problems with different boundaries.
\end{remark}

\begin{remark}
    The competitor also works for certain generalized helicoids \eqref{eq:general-helicoid}:  Suppose that $\alpha>0$, $m_j\in\mathbb Z$, not all zero, and $\sum_{j=1}^k m_j$ is odd.  Let \(\bla = \alpha(m_1,\ldots,m_k)\).  Then, $\mathcal H_{\bla}$ is not area-minimizing.
\end{remark}

\section{Stability of generalized helicoids in slope space}\label{sec:generalized}

We return to the generalized helicoids $\mathcal{H}_{\bla}$ defined by the parametrization in \eqref{eq:general-helicoid}. Proposition~\ref{prop:stability-reduction} reduces their stability to nonnegativity of $q_{\bla}$ in \eqref{eq:L^2-functional}. For helicoids $\Hk$ with parameters $\bla=(1,\ldots,1)$, Proposition~\ref{prop:sharp-hardy} proves stability when $k\ge3$. We now allow the slopes to vary and derive stable and unstable regions in the parameter space $(0,\infty)^k$.

\subsection{Anisotropic stable regions}\label{sec:Hardy-anisotropic}

Let
\[
\bla=(\lambda_1,\ldots,\lambda_k),\qquad \Lambda=\operatorname{diag}(\lambda_1,\ldots,\lambda_k), \qquad \lambda_i>0.
\]
Under the change of variables
\[
 \tilde f(u)=f(\Lambda u),
 \qquad
 x=\Lambda u,
\]
$q_{\bla}$ in \eqref{eq:L^2-functional} becomes
\begin{equation}\label{eq:anisotropic-q}
 \Bigl(\prod_{i=1}^k\lambda_i\Bigr)q_{\bla}[\tilde f]
 =\int_{\R^k}h\sum_{j=1}^k\lambda_j^2\left[
 |\pr_{x_j}f|^2
 +\bigl|\pr_{x_j}(x\cdot f)\bigr|^2
 -\frac{3(f^j)^2}{h^2}
 \right]dx,
 \qquad
 h=\sqrt{1+|x|^2}.
\end{equation}
Hence $\mathcal{H}_{\bla}$ is stable if the right-hand side is nonnegative for every $f\in C_c^\infty(\R^k;\R^k)$.

For unequal slopes, Proposition~\ref{prop:sharp-hardy} cannot be applied directly to the integral in \eqref{eq:anisotropic-q}. Set
\[
 \lambda_{\min}=\min_j\lambda_j,
 \qquad
 \lambda_{\max}=\max_j\lambda_j.
\]
For $0\le\sigma\le\lambda_{\min}^2$, we split the terms involving derivatives into an isotropic part of size $\sigma$ and remaining terms. The isotropic part is controlled by Proposition~\ref{prop:sharp-hardy}, while the residual part is estimated through the scalar quantity introduced in the proof below. Optimizing over $\sigma$ gives the following criterion.

\begin{theorem}\label{thm:aniso-stability}
Let $k\ge3$ and $a_1,\ldots,a_k\geq0$, and set
\begin{equation}\label{eq:H-definition}
 \mu(a_1,\ldots,a_k)=\sup_{\tau\ge0}\left\{
 \frac12\sum_{j=1}^k\left(\sqrt{a_j^4+4a_j^2\tau}
 -a_j^2\right)-\tau\right\}.
\end{equation}
If $\bla=(\lambda_1,\ldots,\lambda_k)\in (0,\infty)^k$ satisfies
\begin{equation}\label{eq:aniso-criterion}
 \sup_{0\le\sigma\le\lambda_{\min}^2}\left\{
 \frac{(k-1)^2+4}{4}\,\sigma
 +\mu\Bigl(\sqrt{\lambda_1^2-\sigma},\ldots,
 \sqrt{\lambda_k^2-\sigma}\Bigr)\right\}
 \ \ge\ 2\lambda_{\max}^2 ,
\end{equation}
then $\mathcal{H}_{\bla}$ is stable.
\end{theorem}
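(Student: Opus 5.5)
The plan is to split the quadratic form on the right-hand side of \eqref{eq:anisotropic-q} into an isotropic piece, controlled by Proposition~\ref{prop:sharp-hardy}, and an anisotropic residual, controlled pointwise by an algebraic minimization. Fix $0\le\sigma\le\lambda_{\min}^2$ and set $a_j^2=\lambda_j^2-\sigma\ge0$. Write $\lambda_j^2=\sigma+a_j^2$ in front of the derivative terms. This gives
\[
 \sigma\int_{\R^k}h\Bigl(\sum_j|\nabla f^j|^2+|\nabla(x\cdot f)|^2\Bigr)
 +\int_{\R^k}h\sum_j a_j^2\Bigl(|\pr_{x_j}f|^2+|\pr_{x_j}(x\cdot f)|^2\Bigr)
 -3\int_{\R^k}\frac{\sum_j\lambda_j^2(f^j)^2}{h}.
\]
By Proposition~\ref{prop:sharp-hardy}, the first integral is at least $\sigma c_k\int|f|^2/h$. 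The last integral is at least $-3\lambda_{\max}^2\int|f|^2/h$. It therefore suffices to show that the residual term is bounded below by a multiple of $\int|f|^2/h$ that is large enough to close the estimate.

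For the residual, I will use the structure $\pr_{x_j}(x\cdot f)=f^j+x\cdot\pr_{x_j}f$. At each point and for each $j$, I minimize
\[
 a_j^2\bigl(|p|^2+(f^j+x\cdot p)^2\bigr)
\]
over the vector $p=\pr_{x_j}f$. I do not do this naively, since that only yields $a_j^2(f^j)^2/h^2$. Instead, I introduce a free parameter $\tau\ge0$ and borrow an amount $\tau$ times a scalar quantity, such as $|x\cdot\pr_{x_j}f|^2$ or $|f|^2$, that can be returned through an exact integration-by-parts identity in the weight $h$. I expect this identity to be of the same type as the radial square completions \eqref{eq:square-a}--\eqref{eq:square-F}. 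The resulting one-variable quadratic minimization in each coordinate should produce the expression $\tfrac12\bigl(\sqrt{a_j^4+4a_j^2\tau}-a_j^2\bigr)$; this is the positive root of $X^2+a_j^2X-a_j^2\tau=0$, which is typical of optimizing a two-term quadratic with a coupling. The amount $\tau$ borrowed is then paid back once, globally, which accounts for the $-\tau$. Taking the supremum over $\tau$ gives a lower bound of $\mu(a_1,\ldots,a_k)\int|f|^2/h$ for the residual.

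Adding the three estimates reduces stability to the scalar inequality
\[
 \sigma c_k+\mu\bigl(\sqrt{\lambda_1^2-\sigma},\ldots\bigr)\ge 3\lambda_{\max}^2 .
\]
Since $c_k-1=\frac{(k-1)^2+4}{4}$, the stated criterion \eqref{eq:aniso-criterion} is $\sigma(c_k-1)+\mu\ge2\lambda_{\max}^2$. So I expect the bookkeeping to be organized as follows. One unit of $\sigma|f|^2/h$ from the isotropic part should be paired against $\sigma|f^j|^2$ out of $3\lambda_j^2(f^j)^2$. Equivalently, the zeroth-order term should be split as $3\lambda_j^2=\sigma+(2\lambda_j^2+\ldots)$, with the $a_j^2$ portion absorbed inside the residual minimization. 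The result is the precise threshold $2\lambda_{\max}^2$. Once the constants are aligned, taking the supremum over $\sigma$ completes the proof via Proposition~\ref{prop:stability-reduction}.

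The main obstacle is the residual step. The difficulty is to find the correct auxiliary scalar quantity and the weighted identity that recovers it exactly, with the right sign, so that the per-coordinate minimization produces exactly the square-root formula in $\mu$, and so that the constant bookkeeping yields $\frac{(k-1)^2+4}{4}$ and $2\lambda_{\max}^2$ rather than a lossy version. I would first try borrowing $\tau\,|x\cdot\pr_{x_j}f|^2/h^2$-type terms. I would then check the constants against the isotropic case $\bla=(1,\ldots,1)$, $\sigma=1$, where the criterion must reduce to the consequence \eqref{eq:ck-minus-3} of Proposition~\ref{prop:sharp-hardy}.
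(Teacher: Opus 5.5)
Your outer structure matches the paper's. You split $\lambda_j^2=\sigma+a_j^2$ in the derivative terms, bound the isotropic part by Proposition~\ref{prop:sharp-hardy}, and account for the zeroth-order term as $3\lambda_j^2=a_j^2+(2\lambda_j^2+\sigma)$. This leaves $-(2\lambda_{\max}^2+\sigma)$, and the extra $\sigma$ is paid out of $c_k\sigma$.

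However, the residual estimate is the only new ingredient here, and you leave it open. What it must deliver is
\[
 \int_{\R^k}h\sum_{j=1}^k a_j^2\Bigl(|\pr_{x_j}f|^2+|\pr_{x_j}(x\cdot f)|^2\Bigr)
 \ \ge\ \int_{\R^k}\sum_{j=1}^k\frac{a_j^2(f^j)^2}{h}+\mu(a_1,\ldots,a_k)\int_{\R^k}\frac{|f|^2}{h}.
\]
That is, you must \emph{keep} the naive pointwise gain $a_j^2(f^j)^2/h^2$ and obtain $\mu$ on top of it, not instead of it. Otherwise you only get the weaker condition $\sigma c_k+\mu\ge3\lambda_{\max}^2$, as you noticed.

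The devices you propose do not achieve this. The term $\tau|x\cdot\pr_{x_j}f|^2/h^2$ is quadratic in $\pr_{x_j}f$, and no exact integration-by-parts identity returns it; bounding it below is itself a Hardy problem. A zeroth-order quantity such as $\tau|f|^2$ does not interact with the minimization over $p$ at all.

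The missing idea is to work with $F=(f,x\cdot f)\in\R^{k+1}$ and the unit vector $n=(-x,1)/h$. Then $F\cdot n=0$ and $\pr_{x_j}F\cdot n=f^j/h$, so the normal component of $\pr_{x_j}F$ decouples from its component along $F$. With $\rho=|F|$ this gives
\[
|\pr_{x_j}f|^2+|\pr_{x_j}(x\cdot f)|^2=|\pr_{x_j}F|^2\ge(\pr_{x_j}\rho)^2+(f^j)^2/h^2 .
\]

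The first term is then handled by a scalar anisotropic Hardy inequality:
\begin{itemize}
\item Expand $0\le\int h\sum_ja_j^2\bigl|\pr_{x_j}\rho+\gamma_jx_j\rho/h^2\bigr|^2$ and integrate the cross term by parts using $\pr_{x_j}(x_j/h)=1/h-x_j^2/h^3$.
\item Choose $\gamma_j\ge0$ with $a_j^2(\gamma_j^2+\gamma_j)=\tau$ independent of $j$. Then the $x_j^2$ terms sum to $\tau|x|^2=\tau(h^2-1)$; this is the source of the $-\tau$, and $a_j^2\gamma_j$ is exactly your root $X$.
\item This yields $\int h\sum_ja_j^2(\pr_{x_j}\rho)^2\ge\bigl(\sum_ja_j^2\gamma_j-\tau\bigr)\int\rho^2/h$.
\item Finally, $\rho^2\ge|f|^2$ and $\mu\ge0$ finish the argument.
\end{itemize}

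In your borrowing language, the term to borrow is the cross term $a_j^2\gamma_j(x_j/h)\pr_{x_j}|F|^2$, which is affine in $p=\pr_{x_j}f$. The constrained minimization over $p$ then returns $(f^j)^2/h^2-\gamma_j^2x_j^2|F|^2/h^4$, precisely because $F\perp n$. So your scheme can be completed, but only once you identify the scalar $|F|^2=|f|^2+(x\cdot f)^2$ and the equal-$\tau$ normalization.
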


\begin{proof}
Fix $\sigma\in[0,\lambda_{\min}^2]$, so that $\lambda_j^2-\sigma\ge0$
for every $j$. For any $f\in C_c^\infty(\R^k;\R^k)$, set $F=(f,x\cdot f)\in\R^{k+1}$,
$n=\frac{(-x,1)}{h}\in\R^{k+1}$ and $\rho=|F|$.  Since $F\cdot n=0$,
$|n|=1$ and $\pr_{x_j}F\cdot n=\frac{f^j}{h}$,
\begin{equation}\label{eq:rho-reduction}
 |\pr_{x_j}f|^2+|\pr_{x_j}(x\cdot f)|^2=|\pr_{x_j}F|^2
 \ \ge\ \left|\pr_{x_j}F\cdot\frac{F}{\rho}\right|^2
 +|\pr_{x_j}F\cdot n|^2
 =|\pr_{x_j}\rho|^2+\frac{(f^j)^2}{h^2}.
\end{equation}
Here $\rho$ is Lipschitz and \eqref{eq:rho-reduction} holds at every
point of $\{F\ne0\}$. On the other hand, on $\{F=0\}$ one has $f=0$ and $\nabla_x\rho=0$
almost everywhere, so \eqref{eq:rho-reduction} holds a.e.\ on $\R^k$,
and the integrations by parts below are performed on the compactly
supported Lipschitz function $\rho$.
Splitting the gradient sum at the level $\sigma$ and applying
\eqref{eq:rho-reduction} with the nonnegative weights
$\lambda_j^2-\sigma$,
\begin{align*}
 \sum_{j=1}^k\lambda_j^2\Bigl(|\pr_{x_j}f|^2
 +|\pr_{x_j}(x\cdot f)|^2\Bigr)
 \ \ge\ &\sigma\Bigl(|\nabla_xf|^2+|\nabla_x(x\cdot f)|^2\Bigr)\\
 &+\sum_{j=1}^k(\lambda_j^2-\sigma)|\pr_{x_j}\rho|^2
 +\sum_{j=1}^k(\lambda_j^2-\sigma)\frac{(f^j)^2}{h^2}.
\end{align*}
Multiplying by $h$, subtracting $\frac{3}{h}\sum_j\lambda_j^2(f^j)^2$, and using $\sum_j\bigl(2\lambda_j^2+\sigma\bigr)(f^j)^2
\le(2\lambda_{\max}^2+\sigma)|f|^2$, the integrand of
\eqref{eq:anisotropic-q} is bounded below by
\[
 h\,\sigma\Bigl(|\nabla_xf|^2+|\nabla_x(x\cdot f)|^2\Bigr)
 +h\sum_{j=1}^k(\lambda_j^2-\sigma)|\pr_{x_j}\rho|^2
 -\bigl(2\lambda_{\max}^2+\sigma\bigr)\frac{|f|^2}{h}.
\]
By Proposition~\ref{prop:sharp-hardy}, the first term integrates to at least
$c_k\sigma\int_{\R^k}\frac{|f|^2}{h}\,dx$.  For the second, let
$\tau\ge0$ and, for those $j$ with $\lambda_j^2>\sigma$, choose
$\gamma_j\ge0$ with
$(\lambda_j^2-\sigma)(\gamma_j^2+\gamma_j)=\tau$, that is
$\gamma_j=\frac{\sqrt{(\lambda_j^2-\sigma)^2
+4(\lambda_j^2-\sigma)\tau}-(\lambda_j^2-\sigma)}
{2(\lambda_j^2-\sigma)}$.  Using
$\pr_j\bigl(\frac{x_j}{h}\bigr)=\frac1h-\frac{x_j^2}{h^3}$ and
integrating by parts,
\begin{align*}
0&\le\int_{\R^k}h\sum_{j=1}^k(\lambda_j^2-\sigma)
\left|\pr_{x_j}\rho+\frac{\gamma_jx_j}{h^2}\rho\right|^2dx\\
&=\int_{\R^k}h\sum_{j=1}^k(\lambda_j^2-\sigma)|\pr_{x_j}\rho|^2\,dx
-\left(\sum_{j=1}^k(\lambda_j^2-\sigma)\gamma_j-\tau\right)
\int_{\R^k}\frac{\rho^2}{h}\,dx
-\tau\int_{\R^k}\frac{\rho^2}{h^3}\,dx ,
\end{align*}
so that, after taking the supremum over $\tau\ge0$ and recalling
\eqref{eq:H-definition},
\[
 \int_{\R^k}h\sum_{j=1}^k(\lambda_j^2-\sigma)|\pr_{x_j}\rho|^2\,dx
 \ \ge\ \mu\Bigl(\sqrt{\lambda_1^2-\sigma},\ldots,
 \sqrt{\lambda_k^2-\sigma}\Bigr)\int_{\R^k}\frac{\rho^2}{h}\,dx .
\]
Since $\mu\ge0$ and $\rho^2\ge|f|^2$, the right-hand side of
\eqref{eq:anisotropic-q} is therefore at least
\[
 \left[\frac{(k-1)^2+4}{4}\,\sigma
 +\mu\Bigl(\sqrt{\lambda_1^2-\sigma},\ldots,
 \sqrt{\lambda_k^2-\sigma}\Bigr)-2\lambda_{\max}^2\right]
 \int_{\R^k}\frac{|f|^2}{h}\,dx ,
\]
where we used $c_k\sigma-\sigma=\frac{(k-1)^2+4}{4}\sigma$.  Taking the
supremum over $\sigma$ and invoking \eqref{eq:aniso-criterion}
completes the proof.
\end{proof}

\begin{corollary}\label{cor:explicit-window}
Let $k\ge3$ and $\bla\in(0,\infty)^k$.  If
\begin{equation}\label{eq:stability-window}
 8\,\lambda_{\max}^2\ \le\ \bigl[(k-1)^2+4\bigr]\lambda_{\min}^2 ,
\end{equation}
then $\mathcal{H}_{\bla}$ is stable.
\end{corollary}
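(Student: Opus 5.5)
The plan is to apply Theorem~\ref{thm:aniso-stability} with the simplest admissible choices: take $\sigma=\lambda_{\min}^2$ in the supremum of \eqref{eq:aniso-criterion}, and bound the $\mu$-term below by $0$. The left-hand side of \eqref{eq:aniso-criterion} is then at least
\[
\frac{(k-1)^2+4}{4}\,\lambda_{\min}^2 ,
\]
provided $\mu\ge0$ at the corresponding arguments. Under hypothesis \eqref{eq:stability-window} this quantity is at least $2\lambda_{\max}^2$, so \eqref{eq:aniso-criterion} holds. Theorem~\ref{thm:aniso-stability} then gives stability of $\mathcal H_{\bla}$.

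The one point to check is that $\mu(a_1,\ldots,a_k)\ge0$ for arbitrary $a_j\ge0$, including when some $a_j=0$. Here $a_j=\sqrt{\lambda_j^2-\lambda_{\min}^2}$ vanishes for at least one index $j$. This follows from the definition \eqref{eq:H-definition} by taking $\tau=0$ in the supremum. Each summand $\sqrt{a_j^4}-a_j^2$ vanishes, so the expression inside the braces equals $0$, and the supremum is therefore $\ge0$. It is also worth confirming that $\sigma=\lambda_{\min}^2$ lies in the allowed range $[0,\lambda_{\min}^2]$, which it does, as the endpoint. Then $\lambda_j^2-\sigma\ge0$ for all $j$, so the square roots are real. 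The proof of Theorem~\ref{thm:aniso-stability} already handled indices with $\lambda_j^2=\sigma$ by choosing $\gamma_j$ only where $\lambda_j^2>\sigma$, so nothing new is needed there.

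There is no real obstacle; the only step requiring care is the rearrangement of \eqref{eq:stability-window} into \eqref{eq:aniso-criterion}. Dividing \eqref{eq:stability-window} by $4$ gives exactly $2\lambda_{\max}^2\le\frac{(k-1)^2+4}{4}\lambda_{\min}^2$. As a sanity check, equal slopes give $8\le(k-1)^2+4$, i.e.\ $k\ge3$, which is consistent with Theorem~\ref{thm:stable-half}.
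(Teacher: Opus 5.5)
Your proposal is correct and is essentially identical to the paper's proof. The paper also takes $\sigma=\lambda_{\min}^2$ in \eqref{eq:aniso-criterion} and uses $\mu\ge0$, which follows from $\tau=0$ exactly as you observe, so that \eqref{eq:stability-window} divided by $4$ gives the criterion of Theorem~\ref{thm:aniso-stability}.
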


\begin{proof}
Take $\sigma=\lambda_{\min}^2$ in \eqref{eq:aniso-criterion} and use
$\mu\ge0$.
\end{proof}

Thus, for $k\ge4$, stability persists in an explicit neighborhood of the ray $\{\lambda_1=\cdots=\lambda_k>0\}$ in slope space.  At $k=3$, condition \eqref{eq:stability-window} forces $\lambda_{\max}=\lambda_{\min}$, while Remark~\ref{rem:explicit-unstable-regions} below shows that every non-isotropic parameter is unstable.  We next construct the unstable test fields and prove the openness of the unstable set.

\subsection{Unstable perturbations}\label{sec:unstable}

We first complete the instability half of Theorem~\ref{thm:stability} by treating $H_2$. Proposition~\ref{prop:stability-reduction} reduces the problem to finding a compactly supported field $ f$ with $q_{(1,1)}[f]<0$. The required field is tangent to the circles and supported on a large annulus.

For $x\in\R^2\setminus\{0\}$ write $x=r\omega$, $\omega=(\cos\theta,\sin\theta)$, and $e_\theta=(-\sin\theta,\cos\theta)$. The field on $S^1$ is
\[
Y(\theta)
=
\nabla_S\omega_1
=
e_1-\omega_1\omega
=
-\sin\theta\,e_\theta.
\]

\begin{proposition}\label{prop:degree-one-instability}
$\Hk$ is unstable for $k\le2$.  More precisely,
$\ind H_{2}=\infty$.
\end{proposition}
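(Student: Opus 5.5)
The instability of $H_1$ is the classical result \cite{dCP,FCS}, so the work is for $H_2$. By Proposition~\ref{prop:stability-reduction} it suffices to find one $f\in C_c^\infty(\R^2;\R^2)$ with $q_{(1,1)}[f]<0$; that proposition then also gives $\ind H_2=\infty$. I would use the field suggested above,
\[
f(r\omega)=\varphi(r)\,Y(\theta)=-\varphi(r)\sin\theta\,e_\theta,
\]
with $\varphi$ a radial profile supported in a large annulus $\{R_1\le r\le R_2\}$. This $f$ is smooth and compactly supported because it vanishes near the origin.

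\emph{Step 1: reduce to a one-variable functional.} The field $f$ is tangent to circles, so $x\cdot f=0$. Hence the term $|\nabla(x\cdot f)|^2$ in $q_{(1,1)}$ drops out, and we are in the decomposition of the proof of Proposition~\ref{prop:sharp-hardy} with $a=0$ and $F=\varphi Y$. The polar identity for $|\nabla f|^2$ there is a pointwise frame computation and holds for $k=2$ as well. It gives
\[
|\nabla f|^2=\varphi'^2|Y|^2+\frac{\varphi^2}{r^2}\bigl(|Y|^2+|DY|^2\bigr).
\]
On $S^1$ we have $|Y|^2=\sin^2\theta$ and $|DY|^2=|\partial_\theta(-\sin\theta)|^2=\cos^2\theta$. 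Integrating over the circle gives $\int|Y|^2=\pi$ and $\int(|Y|^2+|DY|^2)=2\pi$. Therefore
\[
q_{(1,1)}[f]=\pi\int_0^\infty\Bigl(h r\,\varphi'^2+\frac{2h}{r}\varphi^2-\frac{3r}{h}\varphi^2\Bigr)dr .
\]

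\emph{Step 2: compare with a one-dimensional Hardy inequality.} For large $r$ we have $h=r+O(1/r)$, so the $\varphi^2$ coefficient is $\frac{2h}{r}-\frac{3r}{h}=-1+O(r^{-2})$ and the derivative weight is $hr=r^2+O(1)$. Up to errors that are small on a far-out annulus, the functional is therefore $\pi\int\bigl(r^2\varphi'^2-\varphi^2\bigr)dr$. The sharp constant in $\int r^2\varphi'^2\,dr\ge\frac14\int\varphi^2\,dr$ is $\frac14<1$, so this model functional can be made negative.

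Concretely, I would take $\varphi(r)=r^{-1/2}\psi(\log r)$ with $\psi\in C_c^\infty(0,1)$ rescaled as $\psi_L(t)=\psi\bigl((t-T)/L\bigr)$, and substitute $t=\log r$. Then $\int\varphi^2\,dr=\int\psi_L^2\,dt\sim L$, and
\[
\int r^2\varphi'^2\,dr=\int\bigl(\psi_L'-\tfrac12\psi_L\bigr)^2dt=\tfrac14\int\psi_L^2\,dt+\int\psi_L'^2\,dt,
\]
where the cross term integrates to zero. Since $\int\psi_L'^2\,dt=O(1/L)$, the main part is at most $\bigl(-\tfrac34 L+O(1/L)\bigr)\|\psi\|_2^2$. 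Choosing $T$ large makes the supports lie where $r\ge e^T$, so the $O(r^{-2})$ corrections contribute only $O(e^{-2T})\int\psi_L^2\,dt$, which is negligible. Hence $q_{(1,1)}[f]<0$.

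\emph{Main obstacle.} The only delicate point is the exact bookkeeping in Step 1: confirming that the polar formula (and the $+|F|^2/r^2$ term from the connection of the tangential part) is correct for $k=2$, and checking the $S^1$ integrals. Once the $\varphi^2$ coefficient is confirmed to tend to $-1$, the gap between the Hardy constant $\tfrac14$ and $1$ leaves ample room. If the polar bookkeeping causes trouble, I would instead compute $|\nabla f|^2$ directly in Cartesian coordinates, writing $f=\varphi(r)\sin\theta\,(\sin\theta,-\cos\theta)$.
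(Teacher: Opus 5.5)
Your proposal is correct and is essentially the paper's proof. It uses the same annular field $r^{-1/2}\psi\bigl((\log r-T)/L\bigr)\,Y(\theta)$ with $x\cdot f=0$, which reduces to the same limiting functional $\pi\int(\psi')^2-\frac{3\pi}{4}\int\psi^2$, made negative by dilating in $L$ and then pushing the annulus out to $R=e^T\to\infty$. The only difference is bookkeeping: the paper differentiates in Cartesian coordinates, while you invoke the polar formula with $a=0$. One small imprecision: your $O(1/L)$ term and the $O(e^{-2T})$ corrections also carry $\|\psi'\|_2^2$, not just $\|\psi\|_2^2$, which is harmless since $L$ is fixed before $T\to\infty$.
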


\begin{proof}
We first consider $k=2$.  Fix
$0\ne\phi\in C_c^\infty(\R)$ and, for $R>1$, define
\begin{equation}\label{eq:H2-annular-field}
f_R(x)
=
r^{-1/2}
\phi\left(\log\frac rR\right)
Y(\theta).
\end{equation}
Since $Y\cdot\omega=0$, we have $x\cdot f_R=0$.  Set
$\sigma=\log(r/R)$.  Using
\[
\pr_{x_1}r=\cos\theta,
\qquad
\pr_{x_2}r=\sin\theta,
\qquad
\pr_{x_1}\theta=-\frac{\sin\theta}{r},
\qquad
\pr_{x_2}\theta=\frac{\cos\theta}{r},
\]
together with
\[
\frac{dY}{d\theta}
=
-\cos\theta\,e_\theta+\sin\theta\,\omega,
\]
direct differentiation gives
\begin{align*}
\pr_{x_1}f_R
&=
r^{-3/2}
\left[
\left(
-\sin\theta\cos\theta\,\phi'
+\frac32\sin\theta\cos\theta\,\phi
\right)e_\theta
-\sin^2\theta\,\phi\,\omega
\right],\\
\pr_{x_2}f_R
&=
r^{-3/2}
\left[
\left(
-\sin^2\theta\,\phi'
+
\left(\frac12\sin^2\theta-\cos^2\theta\right)\phi
\right)e_\theta
+
\sin\theta\cos\theta\,\phi\,\omega
\right].
\end{align*}
Since $\{\omega,e_\theta\}$ is orthonormal,
\[
|\pr_{x_1}f_R|^2+|\pr_{x_2}f_R|^2
=
r^{-3}
\left[
\sin^2\theta\,(\phi')^2
-\sin^2\theta\,\phi\phi'
+
\left(
\frac54-\frac14\cos^2\theta
\right)\phi^2
\right].
\]
For $H_2$, formula \eqref{eq:anisotropic-q} therefore gives,
using $dx=r\,dr\,d\theta$ and $dr=r\,d\sigma$,
\begin{align*}
q_{(1,1)}[f_R]
&=
\int_\R\int_0^{2\pi}
\frac hr
\left[
\sin^2\theta\,(\phi')^2
-\sin^2\theta\,\phi\phi'
+
\left(
\frac54-\frac14\cos^2\theta
\right)\phi^2
\right]
d\theta\,d\sigma\\
&\quad
-
3\int_\R\int_0^{2\pi}
\frac rh
\sin^2\theta\,\phi^2
\,d\theta\,d\sigma,
\qquad
h=\sqrt{1+R^2e^{2\sigma}}.
\end{align*}
On the support of $\phi$, both $h/r$ and $r/h$ converge uniformly to
$1$ as $R\to\infty$.  Since
\[
\int_0^{2\pi}\sin^2\theta\,d\theta=\pi,
\qquad
\int_0^{2\pi}
\left(
\frac54-\frac14\cos^2\theta
\right)d\theta
=
\frac{9\pi}{4},
\]
and $\int_\R\phi\phi'\,d\sigma=0$, we obtain
\begin{equation}\label{eq:H2-annular-limit}
\lim_{R\to\infty}q_{(1,1)}[f_R]
=
\pi\int_\R(\phi')^2\,d\sigma
-
\frac{3\pi}{4}\int_\R\phi^2\,d\sigma .
\end{equation}
Now choose $0\ne\psi\in C_c^\infty(\R)$ and set
$\phi(\sigma)=\psi(\sigma/L)$.  Then
\[
\int_\R(\phi')^2\,d\sigma
=
\frac1L\int_\R(\psi')^2\,d\sigma,
\qquad
\int_\R\phi^2\,d\sigma
=
L\int_\R\psi^2\,d\sigma.
\]
Hence the right-hand side of \eqref{eq:H2-annular-limit} is negative
for all sufficiently large $L$.  Fixing such an $L$, we obtain
$q_{(1,1)}[f_R]<0$ for all sufficiently large $R$.  Therefore
Proposition~\ref{prop:stability-reduction} gives
$\ind H_{2}=\infty$.

For $k=1$, $H_1$ is the classical helicoid, a complete non-planar
minimal surface in $\R^3$, hence unstable by \cite{dCP,FCS}.
\end{proof}

Together, Theorem~\ref{thm:stable-half} and
Proposition~\ref{prop:degree-one-instability} prove
Theorem~\ref{thm:stability}.

The same annular perturbation also gives explicit unstable regions for anisotropic slopes.

\begin{remark}\label{rem:explicit-unstable-regions} The same annular computation applies in general dimension.  For $1\le p\le k$, set
\[
r=|\Lambda u|,\qquad
\omega=\frac{\Lambda u}{|\Lambda u|},
\qquad
\tilde f_R(u)
=
r^{-(k-1)/2}
\psi\left(\log\frac rR\right)
\bigl(e_p-\omega_p\omega\bigr).
\]
A direct computation in \eqref{eq:anisotropic-q} shows that
$\ind \mathcal{H}_{\bla}=\infty$ whenever
$(7-k)\lambda_p^2>(k-1)\sum_{j\ne p}\lambda_j^2$ for some $p$.
Consequently, $k=2$ is always unstable; for $k=3$, every non-isotropic
parameter is unstable; and for $k=4,5,6$ it suffices, respectively, that
$\lambda_p^2>\sum_{j\ne p}\lambda_j^2$,
$\lambda_p^2>2\sum_{j\ne p}\lambda_j^2$, or
$\lambda_p^2>5\sum_{j\ne p}\lambda_j^2$ for some $p$.
\end{remark}

These are explicit sufficient conditions.  More generally, the
unstable set is open in the set of positive slope parameters.

\begin{proposition}\label{prop:openness}
For every $k\ge1$, the set
\[
\left\{
\bla\in(0,\infty)^k:
\mathcal{H}_{\bla}\text{ is unstable}
\right\}
\]
is open in $(0,\infty)^k$.
\end{proposition}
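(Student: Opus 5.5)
By Proposition~\ref{prop:stability-reduction}, $\mathcal{H}_{\bla}$ is unstable if and only if there is some $f_0\in C_c^\infty(\R^k;\R^k)$ with $q_{\bla}[f_0]<0$. So the plan is to fix one unstable parameter, keep its witness $f_0$ fixed, and show that $\bla\mapsto q_{\bla}[f_0]$ is continuous. Openness then follows immediately: if $\bla_0$ is unstable with witness $f_0$, then $q_{\bla}[f_0]<0$ for all $\bla$ in a neighborhood of $\bla_0$ in $(0,\infty)^k$. Each such $\bla$ is therefore unstable, and in fact has infinite index by the same proposition.

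Continuity itself is straightforward because $f_0$ has support in a fixed compact set $K\subset\R^k$. The integrand of \eqref{eq:L^2-functional} is
\[
h_{\bla}\Bigl(\sum_j|\nabla f_0^j|^2+|\nabla(\Lambda u\cdot f_0)|^2\Bigr)-\frac{3|\Lambda f_0|^2}{h_{\bla}},
\qquad h_{\bla}=\sqrt{1+|\Lambda u|^2}.
\]
We have $\nabla(\Lambda u\cdot f_0)=\sum_j\lambda_j\bigl(u_j\nabla f_0^j+f_0^j\mathbf e_j\bigr)$. Every term is therefore a polynomial in $\lambda$ multiplied by smooth functions of $u$, or a product with $h_{\bla}^{\pm1}$. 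On $K\times U$, where $U$ is any compact neighborhood of $\bla_0$, these expressions are jointly continuous in $(u,\bla)$ and uniformly bounded, since $h_{\bla}\ge 1$. By dominated convergence, or simply uniform continuity on the compact set $K\times U$, $q_{\bla}[f_0]$ depends continuously on $\bla$.

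There is no real obstacle here. The only point to check is that the test function need not change with $\bla$. That holds because the reduction in Proposition~\ref{prop:stability-reduction} expresses the stability form of every $\mathcal{H}_{\bla}$ on the same fixed space $C_c^\infty(\R^k;\R^k)$, in the $u$-coordinates, with all the $\bla$-dependence explicit in the coefficients. In particular, we avoid the rescaled form \eqref{eq:anisotropic-q}, whose change of variables would move the support of the test function with $\bla$.
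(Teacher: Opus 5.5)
Your proof is correct and follows essentially the same route as the paper: fix a witness $f_0$ with $q_{\bla_0}[f_0]<0$ and use continuity of the form in $\bla$. The only difference is that the paper works in the rescaled form \eqref{eq:anisotropic-q} with the $\bla$-dependent witness $f\circ\Lambda$, where $f=f_0\circ\Lambda_0^{-1}$. This makes $\bigl(\prod_i\lambda_i\bigr)q_{\bla}[f\circ\Lambda]$ linear in $(\lambda_1^2,\ldots,\lambda_k^2)$, so continuity is immediate. The moving support you wanted to avoid is harmless, since each $\bla$ only needs some witness, but your fixed-$f_0$ argument via uniform continuity on $K\times U$ works equally well.
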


\begin{proof}
Fix $\bla_0\in(0,\infty)^k$ such that $\mathcal{H}_{\bla_0}$ is unstable, and
write $\Lambda_0=\operatorname{diag}(\bla_0)$.  By
\eqref{eq:stability-cond} there exists
$\tilde f_0\in C_c^\infty(\R^k;\R^k)$ with
$q_{\bla_0}[\tilde f_0]<0$.  Set
\[
f(x)=\tilde f_0(\Lambda_0^{-1}x).
\]
For $\bla\in(0,\infty)^k$ and
$\Lambda=\operatorname{diag}(\bla)$, \eqref{eq:anisotropic-q} gives
\[
\left(\prod_{i=1}^k\lambda_i\right)
q_{\bla}[f\circ\Lambda]
=
\sum_{j=1}^k\lambda_j^2
\int_{\R^k}
h\left[
|\pr_{x_j}f|^2
+
|\pr_{x_j}(x\cdot f)|^2
-
\frac{3(f^j)^2}{h^2}
\right]dx .
\]
The right-hand side is continuous in $\bla$ and negative at $\bla_0$,
because $f\circ\Lambda_0=\tilde f_0$.  It remains negative in a
neighborhood of $\bla_0$, and the positivity of
$\prod_i\lambda_i$ gives $q_{\bla}[f\circ\Lambda]<0$ there.
\end{proof}

Finally, instability persists after adjoining zero slopes and hence, by perturbation, sufficiently small positive slopes.  Indeed, \eqref{eq:L^2-functional} and Proposition~\ref{prop:stability-reduction} remain valid for nonnegative slopes, so $q_{(\bla,0,\ldots,0)}[\cdot]<0$ still implies infinite index.

\begin{proposition}\label{prop:adding-slopes}
Suppose $\mathcal{H}_{\bla}$ is unstable.  For every $m\ge1$, the helicoid
associated with $(\bla,0,\ldots,0)\in\R^{k+m}$ is unstable.  Moreover,
there exists $\epsilon_0>0$ such that the helicoid associated with
$(\bla,\epsilon,\ldots,\epsilon)\in(0,\infty)^{k+m}$ is unstable for
every $0<\epsilon<\epsilon_0$.
\end{proposition}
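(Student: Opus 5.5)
The plan is to push the negative test field for $\mathcal H_{\bla}$ into $\R^{k+m}$ by making it independent of the new variables and then cutting it off in those variables. Since $\mathcal H_{\bla}$ is unstable, Proposition~\ref{prop:stability-reduction} gives $f_0\in C_c^\infty(\R^k;\R^k)$ with $q_{\bla}[f_0]<0$. Write $u=(u',u'')\in\R^k\times\R^m$ and $\bla'=(\bla,0,\ldots,0)$. Because the slopes in the last $m$ slots vanish, we have
\[
h'=\sqrt{1+|\Lambda u'|^2}=h(u'),\qquad \Lambda'u\cdot F=\Lambda u'\cdot(F^1,\dots,F^k),\qquad |\Lambda' F|^2=|\Lambda(F^1,\dots,F^k)|^2 .
\]
So the weight and the potential term do not depend on $u''$, and the new components $F^{k+1},\dots,F^{k+m}$ enter only through their gradient energy.

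As test field take $F(u',u'')=(f_0(u')\chi(u''/L),0,\ldots,0)$, with $0\ne\chi\in C_c^\infty(\R^m)$. The gradient splits into $u'$- and $u''$-derivatives, and this gives
\[
q_{\bla'}[F]=L^m\|\chi\|_2^2\,q_{\bla}[f_0]+L^{m-2}\|\nabla\chi\|_2^2\int_{\R^k}h\bigl(|f_0|^2+(\Lambda u'\cdot f_0)^2\bigr)\,du'.
\]
This is the same scaling trick used in the proof of Proposition~\ref{prop:stability-reduction}. For $L$ large the right-hand side is negative. As noted just before the statement, Proposition~\ref{prop:stability-reduction} remains valid for nonnegative slopes, so it yields instability, indeed infinite index, for $(\bla,0,\ldots,0)$.

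For small positive $\epsilon$, fix this $F$ (with $L$ chosen) and look at $q_{(\bla,\epsilon,\ldots,\epsilon)}[F]$ directly from \eqref{eq:L^2-functional}. The integrand involves $h_\epsilon=\sqrt{1+|\Lambda u'|^2+\epsilon^2|u''|^2}$, $\Lambda_\epsilon u\cdot F$ and $|\Lambda_\epsilon F|^2$. All of these depend smoothly on $\epsilon\in[0,1]$ and are uniformly bounded on the compact support of $F$. Dominated convergence then shows that $\epsilon\mapsto q_{(\bla,\epsilon,\ldots,\epsilon)}[F]$ is continuous at $\epsilon=0$. Hence it is negative for $0<\epsilon<\epsilon_0$, and Proposition~\ref{prop:stability-reduction} gives instability.

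The only care needed is the continuity step. I use \eqref{eq:L^2-functional} with the fixed $F$, rather than the rescaled form \eqref{eq:anisotropic-q}, because the rescaled form degenerates at $\epsilon=0$; with the fixed $F$ the step is a routine compact-support argument. I expect no real obstacle beyond checking the cross-term computation in $|\nabla(\Lambda'u\cdot F)|^2$, which reduces to $|\nabla_{u'}(\Lambda u'\cdot f_0)|^2\chi^2+(\Lambda u'\cdot f_0)^2|\nabla_{u''}\chi|^2/L^2$.
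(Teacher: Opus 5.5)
Your proposal is correct and follows essentially the same route as the paper. You extend the negative field by a cutoff $\chi(u''/L)$ in the new variables and use the $L^m$ versus $L^{m-2}$ scaling to make $q_{(\bla,0,\ldots,0)}$ negative. You then pass to small positive $\epsilon$ by continuity of $q_{(\bla,\epsilon,\ldots,\epsilon)}$ on that fixed compactly supported field, which is exactly the paper's argument.
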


\begin{proof}
Choose $ f\in C_c^\infty(\R^k;\R^k)$ with
$q_{\bla}[ f]<0$ and fix
$0\ne\chi\in C_c^\infty(\R^m)$.  For $L>0$ set
\[
\chi_L(v)=\chi(v/L),
\qquad
\widehat f_L(u,v)
=
\bigl(\chi_L(v) f(u),0\bigr)
\]
for $(u,v)\in\R^k\times\R^m$.  For
$(\bla,0,\ldots,0)$ the function $h$ is independent of $v$, and direct
substitution in \eqref{eq:L^2-functional} gives
\begin{equation}\label{eq:zero-slope-extension}
q_{(\bla,0,\ldots,0)}[\widehat f_L]
=
\|\chi_L\|_2^2 q_{\bla}[ f]
+
\|\nabla\chi_L\|_2^2
\int_{\R^k}
h\left[
| f|^2
+
\bigl((\Lambda u)\cdot f\bigr)^2
\right]du .
\end{equation}
Since
\[
\|\chi_L\|_2^2=L^m\|\chi\|_2^2,
\qquad
\|\nabla\chi_L\|_2^2
=
L^{m-2}\|\nabla\chi\|_2^2,
\]
the right-hand side of \eqref{eq:zero-slope-extension} is negative for
all sufficiently large $L$.  Fix such an $L$.  For this fixed compactly
supported field,
\[
q_{(\bla,\epsilon,\ldots,\epsilon)}[\widehat f_L]
\longrightarrow
q_{(\bla,0,\ldots,0)}[\widehat f_L]
\qquad\text{as }\epsilon\to0,
\]
and is therefore negative for all sufficiently small $\epsilon>0$.
\end{proof}

\section{Entireness criterion for generalized helicoids}\label{sec:entireness}

Let $\Lambda=\operatorname{diag}(\bla)$ with $\lambda_i>0$. On $\R^{2k+1} = \R^k \times \R^k \times \R$ with coordinates $(x,y,z)$, consider the generalized helicoid $\mathcal{H}_{\bla}\subset\R^{2k+1}$ parametrized by \eqref{eq:general-helicoid}, or equivalently
\begin{equation}\label{eq:generalized-helicoid-real}
    F_{\bla}(u,s)=\bigl(\cos(\Lambda s)u, \, \sin(\Lambda s)u, \, s\bigr), \qquad (u,s)\in\R^k\times\R.
\end{equation}

The sufficient condition obtained in \cite[Theorem 1.8]{TsaiTsuiWanWang} is also necessary.

\begin{theorem}
The helicoid $\mathcal{H}_{\bla}$ is an entire graph over a $(k+1)$-plane if and only if there exists $B\in\R^{k\times k}$ such that
\begin{equation}\label{eq:entire-criterion}
  \det\!\left(\cos(\Lambda s)+B\sin(\Lambda s)\right)>0 \qquad \forall s\in\R.  
\end{equation}
\end{theorem}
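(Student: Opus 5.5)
Sufficiency is \cite[Theorem 1.8]{TsaiTsuiWanWang}, so only necessity needs proof. Suppose $\mathcal{H}_{\bla}$ is an entire graph over a $(k+1)$-plane $\Pi$, i.e.\ the orthogonal projection $\pi:\R^{2k+1}\to\Pi$ restricts to a diffeomorphism $\mathcal{H}_{\bla}\to\Pi$. The plan is to extract $B$ from the linear algebra of $\pi$ along the rulings.

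First, $\Pi$ cannot be orthogonal to $\pr_z$. If it were, $\pi$ would kill the $z$-coordinate, and the points $(u,s)$ and $(u,s+2\pi m/\lambda_j)$ would collide on the line $u\in\R\mathbf e_j$. More generally, I expect injectivity to force $\pi(\pr_z)\neq0$. Next, the ruling $\{F_{\bla}(u,s):u\in\R^k\}$ at height $s$ is the $k$-plane $R_s=\{(\cos(\Lambda s)u,\sin(\Lambda s)u,s)\}$, an affine subspace whose linear part is $W_s=\{(\cos(\Lambda s)u,\sin(\Lambda s)u,0)\}$. Injectivity of $\pi$ on $\mathcal{H}_{\bla}$ gives injectivity on each $R_s$, so $\pi|_{W_s}$ is injective for every $s$. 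I will write $\Pi^\perp$ as the kernel of $\pi$, of dimension $k$, and encode the condition $W_s\cap\Pi^\perp=0$ as the nonvanishing of a determinant $D(s)=\det\bigl(P\cos(\Lambda s)+Q\sin(\Lambda s)\bigr)$. Here the $k\times(2k+1)$ matrix $[P\;Q\;c]$ of linear functionals cuts out $\Pi$ in the appropriate sense, namely the composition of $\pi$ with a fixed linear isomorphism $\Pi\to\R^{k}\oplus\R$ followed by projection to a $k$-dimensional complement of $\pi(\pr_z)$.

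More carefully, I will choose coordinates on $\Pi$ so that $\pi(\pr_z)$ spans the last coordinate direction, and let $M:\R^{2k}\to\R^k$ be the first $k$ components of $\pi$ restricted to $\R^{2k}\times\{0\}$, modulo $\pi(\pr_z)$. Injectivity of the full map $(u,s)\mapsto\pi F_{\bla}(u,s)$ together with the fact that its last component varies in $s$ reduces to the following: for each fixed $s$, the map $u\mapsto M(\cos(\Lambda s)u,\sin(\Lambda s)u)=(P\cos\Lambda s+Q\sin\Lambda s)u$ is an isomorphism. This needs care, since the last component also depends on $u$. The honest statement is that $d(\pi\circ F_{\bla})$ is invertible everywhere, and evaluating at $u=0$ gives exactly that $P\cos(\Lambda s)+Q\sin(\Lambda s)$ is invertible for all $s$. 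So $D(s)\ne0$ for all $s$, and by continuity $D$ has constant sign. At $s=0$, $D(0)=\det P\neq0$, so $P$ is invertible. Setting $B=P^{-1}Q$ gives $\det(\cos\Lambda s+B\sin\Lambda s)=D(s)/\det P>0$ for all $s$.

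The main obstacle is justifying the reduction: that the graph property forces the tangent projection $d\pi|_{T\mathcal{H}_{\bla}}$ to be injective everywhere. This fails in general for a graph that is merely a homeomorphic image, since vertical tangencies are possible. I would use the standard meaning of entire graph in \cite{TsaiTsuiWanWang}, namely a graph of a smooth function $\Pi\to\Pi^\perp$, which gives injectivity of $d\pi$ directly. I would then check that at $u=0$ the tangent space is $W_s\oplus\R(0,0,1)$, so injectivity there is equivalent to $W_s\cap(\Pi^\perp+\R\pr_z)=0$ together with $\pr_z\notin\Pi^\perp$. This is precisely the invertibility of $P\cos\Lambda s+Q\sin\Lambda s$ once $P,Q$ are defined via a basis of the annihilator of $\Pi^\perp+\R\pr_z$ in $(\R^{2k})^*$. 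If a weaker notion of graph is intended, I would instead argue through injectivity on the rulings and a degree or properness argument to rule out $W_s\cap(\Pi^\perp+\R\pr_z)\ne0$.
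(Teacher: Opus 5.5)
Your argument is correct, and it takes a shorter route than the paper's.

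\textbf{The paper's route.} It uses nonsingularity of $D\Phi_{\Pi,\bla}(u,s)$ at \emph{every} $u$. The determinant is affine in $u$ and never vanishes, so its $u$-linear part must drop out. This forces $\dim P_\Pi(\{z=0\})\le k$, and hence $e_z\in\Pi$. Only then does the paper write $\Pi=W\times\R$ with $W=\{(x,B^Tx)\}$ and read the determinant condition off $W^\perp$.

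\textbf{Your route.} You use only the axis points $u=0$, where $T\mathcal{H}_{\bla}=W_s\oplus\R\pr_z$. Transversality of this $(k+1)$-plane to $\Pi^\perp$ is equivalent to $\pr_z\notin\Pi^\perp$ together with $W_s\cap(\Pi^\perp+\R\pr_z)=0$. The annihilator of $\Pi^\perp+\R\pr_z$ consists of functionals killing $\pr_z$, so a basis of it gives the rows of $[P\;Q]$. This turns the second condition into $\det\bigl(P\cos(\Lambda s)+Q\sin(\Lambda s)\bigr)\neq0$ for all $s$. Normalizing by $\det P=D(0)\neq0$ gives $B=P^{-1}Q$ with the required positivity, by continuity from $s=0$.

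\textbf{What each approach buys.} Your version proves a formally stronger necessity statement: it is enough that the tangent planes of $\mathcal{H}_{\bla}$ along the axis all project isomorphically onto a fixed $\Pi$. It also bypasses the verticality of $\Pi$ entirely. The paper's route gives structural information in exchange. Any plane over which $\mathcal{H}_{\bla}$ is an entire graph must contain the $z$-axis and equals $\{(x,B^Tx)\}\times\R$. The reduction to $\Pi=W\times\R$ also makes sufficiency transparent with that explicit plane. Your $B$ agrees with the paper's whenever $\Pi$ is vertical, which a posteriori is always the case.

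\textbf{Two small corrections.} Your earlier sentence saying that $D(s)\neq0$ encodes $W_s\cap\Pi^\perp=0$ is inaccurate. It encodes the stronger condition $W_s\cap(\Pi^\perp+\R\pr_z)=0$, as you correct later. Your fallback for a weaker notion of graph is unnecessary. The paper, like you, takes ``entire graph'' to mean that $P_\Pi\circ F_{\bla}$ is a global diffeomorphism.
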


\begin{proof}
Let $e_z = (0,0,1)^T \in \R^{2k+1}$ and let $P_\Pi: \R^{2k+1} \to \Pi$ be the orthogonal projection onto a $(k+1)$-plane $\Pi$. The submanifold $\mathcal{H}_{\bla}$ is an entire graph over $\Pi$ if and only if $\Phi_{\Pi,\bla} = P_\Pi \circ F_{\bla} : \R^k \times \R \to \Pi$ is a global diffeomorphism.

Setting $E(s) = (\cos(\Lambda s), \sin(\Lambda s), 0)^T \in \R^{(2k+1)\times k}$, the Jacobian matrix
\[
D\Phi_{\Pi,\bla}(u,s) = \bigl[ P_\Pi E(s), \, P_\Pi e_z + P_\Pi E'(s)u \bigr]
\]
is affine in $u$. Since $\Phi_{\Pi,\bla}$ is a diffeomorphism, $D\Phi_{\Pi,\bla}(u,s)$ is non-singular everywhere, forcing 
\[
P_\Pi E'(s)u \in \operatorname{im}(P_\Pi E(s)), \qquad \forall u \in \R^k.
\]
Since $\lambda_i>0$, the columns of $E(s)$ and $E'(s)$ span $\{z=0\}$; hence
\[
 \dim P_\Pi(\{z=0\})\le k.
\]
If $e_z \notin \Pi$, then $\dim(\Pi^\perp \cap \{z=0\}) = k-1$, yielding $\dim P_\Pi(\{z=0\}) = 2k - (k-1) = k+1$, a contradiction. Therefore, $e_z \in \Pi$ and we can write $\Pi = W \times \R$ with $W = \Pi \cap \{z=0\} \subset \R^{2k}$.

The map $\Phi_{\Pi,\bla}$ is a diffeomorphism if and only if $P_W \begin{pmatrix}\cos(\Lambda s) \\ \sin(\Lambda s)\end{pmatrix} : \R^k \to W$ is invertible for all $s \in \R$. Invertibility at $s=0$ implies $W = \{ (x, B^T x) \mid x \in \R^k \}$ for a unique $B \in \R^{k \times k}$, whose orthogonal complement in $\{z=0\}$ is $W^\perp = \{ (v_1, v_2) \in \R^{2k} \mid v_1 + B v_2 = 0 \}$.

Thus, $P_W \begin{pmatrix}\cos(\Lambda s) \\ \sin(\Lambda s)\end{pmatrix}$ has a non-trivial kernel if and only if $\begin{pmatrix}\cos(\Lambda s) \\ \sin(\Lambda s)\end{pmatrix} u \in W^\perp$ for some $u \neq 0$, which is equivalent to $(\cos(\Lambda s) + B\sin(\Lambda s))u = 0$. Hence, invertibility holds for all $s \in \R$ if and only if $\det(\cos(\Lambda s) + B\sin(\Lambda s)) \ne 0$. Because $\det I_k = 1 > 0$, continuity implies that non-vanishing is equivalent to positivity for all $s \in \R$.
\end{proof}

\begin{corollary}
The helicoid $H_k$ is an entire graph over a $(k+1)$-plane if and only if $k$ is even.
\end{corollary}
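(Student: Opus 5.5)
By the preceding theorem with $\Lambda=I_k$, $H_k$ is an entire graph over a $(k+1)$-plane if and only if there is a matrix $B\in\R^{k\times k}$ with
\[
 p(s):=\det\bigl(\cos s\,I_k+B\sin s\bigr)>0\qquad\forall s\in\R .
\]
So the plan is to decide for which $k$ such a $B$ exists. The parity enters through the identity $\cos(s+\pi)I_k+B\sin(s+\pi)=-(\cos s\,I_k+B\sin s)$, which gives $p(s+\pi)=(-1)^k p(s)$.

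\emph{Odd $k$.} Here $p(\pi)=(-1)^k p(0)=-\det I_k=-1<0$ for every $B$. Since $p$ is continuous and $p(0)=1$, it must vanish somewhere in $(0,\pi)$. Hence no $B$ satisfies the criterion, and $H_k$ is not entire. This step is immediate.

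\emph{Even $k$.} I will exhibit an explicit $B$. I take $B=J$, the standard complex structure on $\R^k=\C^{k/2}$, i.e.\ the block-diagonal matrix with blocks $\begin{pmatrix}0&-1\\1&0\end{pmatrix}$. Then $\cos s\,I+J\sin s$ is block diagonal with rotation-type blocks
\[
\begin{pmatrix}\cos s&-\sin s\\ \sin s&\cos s\end{pmatrix},
\]
each of determinant $\cos^2 s+\sin^2 s=1$. Therefore $p(s)\equiv 1>0$, and $H_k$ is entire. In fact, this reproduces the sufficiency already proved in \cite{TsaiTsuiWanWang}.

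The only genuine content is the choice of $B$ in the even case; the odd case is a one-line continuity argument. I do not expect any real obstacle beyond checking that the theorem's criterion is applied with $\Lambda=I_k$ and that the determinant sign is tracked correctly under $s\mapsto s+\pi$.
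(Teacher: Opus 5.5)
Your proof is correct and follows the paper's argument. The odd case is exactly the paper's evaluation $\det(-I_k)=-1$ at $s=\pi$; the intermediate-value remark is unnecessary, since $p(\pi)<0$ already violates the criterion. For even $k$ the paper simply cites the construction of \cite{TsaiTsuiWanWang}, whereas your explicit choice $B=J$, which gives $\det(\cos s\,I_k+J\sin s)\equiv 1$, makes that direction self-contained through the ``if'' part of the preceding theorem.
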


\begin{proof}
For $H_k$, one has $\Lambda=I_k$. If $k$ is even, entireness follows from the construction in \cite[Theorem~1.8]{TsaiTsuiWanWang}. If $k$ is odd, the determinant condition \eqref{eq:entire-criterion} in the preceding theorem fails at $s=\pi$, since $\det\!\left(\cos(\pi I_k)+B\sin(\pi I_k)\right) =\det(-I_k)=(-1)^k=-1$.
\end{proof}


\begin{thebibliography}{99}

\bibitem{BarbosaDajczerJorge}
J.~L.~M. Barbosa, M. Dajczer, and L.~P.~M. Jorge,
\newblock Minimal ruled submanifolds in spaces of constant curvature,
\newblock \emph{Indiana Univ. Math. J.} \textbf{33} (1984), no.~4,
531--547,
\newblock \href{https://doi.org/10.1512/iumj.1984.33.33028}
{doi:10.1512/iumj.1984.33.33028}.

\bibitem{BernsteinBreiner}
J.~Bernstein and C.~Breiner,
\newblock Conformal structure of minimal surfaces with finite topology,
\newblock \emph{Comment. Math. Helv.} \textbf{86} (2011), no.~2,
353--381,
\newblock \href{https://doi.org/10.4171/CMH/226}
{doi:10.4171/CMH/226}.

\bibitem{BryantAustere}
R.~L. Bryant,
\newblock Some remarks on the geometry of austere manifolds,
\newblock \emph{Bol. Soc. Brasil. Mat. (N.S.)} \textbf{21} (1991), no.~2,
133--157,
\newblock \href{https://doi.org/10.1007/BF01237361}
{doi:10.1007/BF01237361}.

\bibitem{ColdingMinicozzi}
T.~H. Colding and W.~P. Minicozzi~II,
\newblock The space of embedded minimal surfaces of fixed genus in a 3-manifold,
\newblock \emph{Ann. of Math. (2)} \textbf{160} (2004):
I. Estimates off the axis for disks, no.~1, 27--68,
\href{https://doi.org/10.4007/annals.2004.160.27}
{doi:10.4007/annals.2004.160.27};
II. Multi-valued graphs in disks, no.~1, 69--92,
\href{https://doi.org/10.4007/annals.2004.160.69}
{doi:10.4007/annals.2004.160.69};
III. Planar domains, no.~2, 523--572,
\href{https://doi.org/10.4007/annals.2004.160.523}
{doi:10.4007/annals.2004.160.523};
IV. Locally simply connected, no.~2, 573--615,
\href{https://doi.org/10.4007/annals.2004.160.573}
{doi:10.4007/annals.2004.160.573}.

\bibitem{dCP}
M.~P. do Carmo and C.~K. Peng,
\newblock Stable complete minimal surfaces in $\mathbb{R}^3$ are planes,
\newblock \emph{Bull. Amer. Math. Soc. (N.S.)} \textbf{1} (1979), no.~6,
903--906,
\newblock \href{https://doi.org/10.1090/S0273-0979-1979-14689-5}
{doi:10.1090/S0273-0979-1979-14689-5}.

\bibitem{FCS}
D. Fischer-Colbrie and R. Schoen,
\newblock The structure of complete stable minimal surfaces in
3-manifolds of non-negative scalar curvature,
\newblock \emph{Comm. Pure Appl. Math.} \textbf{33} (1980), no.~2,
199--211,
\newblock \href{https://doi.org/10.1002/cpa.3160330206}
{doi:10.1002/cpa.3160330206}.

\bibitem{HarveyLawson}
R. Harvey and H.~B. Lawson, Jr.,
\newblock Calibrated geometries,
\newblock \emph{Acta Math.} \textbf{148} (1982), 47--157,
\newblock \href{https://doi.org/10.1007/BF02392726}
{doi:10.1007/BF02392726}.

\bibitem{KerckhoveLawlor}
M. Kerckhove and G.~R. Lawlor,
\newblock A family of stratified area-minimizing cones,
\newblock \emph{Duke Math. J.} \textbf{96} (1999), no.~2, 401--424,
\newblock \href{https://doi.org/10.1215/S0012-7094-99-09612-6}
{doi:10.1215/S0012-7094-99-09612-6}.

\bibitem{KrainesQuaternionic}
V.~Y. Kraines,
\newblock Topology of quaternionic manifolds,
\newblock \emph{Trans. Amer. Math. Soc.} \textbf{122} (1966), no.~2,
357--367,
\newblock \href{https://doi.org/10.1090/S0002-9947-1966-0192513-X}
{doi:10.1090/S0002-9947-1966-0192513-X}.

\bibitem{LawlorCriterion}
G.~R. Lawlor,
\newblock A sufficient criterion for a cone to be area-minimizing,
\newblock \emph{Mem. Amer. Math. Soc.} \textbf{91} (1991), no.~446,
vi+111 pp.,
\newblock \href{https://doi.org/10.1090/memo/0446}
{doi:10.1090/memo/0446}.

\bibitem{LawlorDirectedSlicing}
G.~R. Lawlor,
\newblock Proving area minimization by directed slicing,
\newblock \emph{Indiana Univ. Math. J.} \textbf{47} (1998), no.~4,
1547--1592,
\newblock \href{https://doi.org/10.1512/iumj.1998.47.1341}
{doi:10.1512/iumj.1998.47.1341}.

\bibitem{LiGA}
P.~Li, \emph{Geometric Analysis},
Cambridge University Press, Cambridge, 2012.

\bibitem{MeeksRosenberg}
W.~H. Meeks III and H. Rosenberg,
\newblock The uniqueness of the helicoid,
\newblock \emph{Ann. of Math. (2)} \textbf{161} (2005), no.~2, 727--758,
\newblock \href{https://doi.org/10.4007/annals.2005.161.727}
{doi:10.4007/annals.2005.161.727}.

\bibitem{MorganCalibrations}
F. Morgan,
\newblock Calibrations and new singularities in area-minimizing surfaces: A survey,
\newblock in \emph{Variational Methods}, Progr. Nonlinear Differential
Equations Appl., vol.~4, Birkh\"auser, Boston, 1990, pp.~329--342,
\newblock \href{https://doi.org/10.1007/978-1-4757-1080-9_23}
{doi:10.1007/978-1-4757-1080-9\_23}.

\bibitem{SimonGMT}
L.~Simon, \emph{Lectures on Geometric Measure Theory},
Proceedings of the Centre for Mathematical Analysis, Australian National University, vol.~3, Australian National University, Centre for Mathematical Analysis, Canberra, 1983.

\bibitem{TsaiTsuiWanWang}
C.-J. Tsai, M.-P. Tsui, J. Wan, and M.-T. Wang,
\newblock Constructing entire minimal graphs by evolving planes,
\newblock \emph{Calc. Var. Partial Differential Equations} \textbf{65}
(2026), no.~6, article no.~196,
\newblock \href{https://doi.org/10.1007/s00526-026-03372-8}
{doi:10.1007/s00526-026-03372-8}.

\end{thebibliography}
\end{document}